\newtheorem{thm}{Theorem}[section]
\newtheorem{cor}[thm]{Corollary}
\newtheorem{lem}[thm]{Lemma}
\newtheorem{prop}[thm]{Proposition}
\theoremstyle{definition}
\newtheorem{defin}[thm]{Definition}
\newtheorem{rem}[thm]{Remark}
\newtheorem{exa}[thm]{Example}
\numberwithin{equation}{section}
\begin{document}


\baselineskip=17pt



\title[Monomial Dynamical Systems over Finite Fields]{Monomial Dynamical Systems of Dimension One over Finite Fields}

\author[Min Sha]{Min Sha}
\address{Department of Mathematical Sciences\\ Tsinghua University\\
100084 Beijing, China} \email{shamin2010@gmail.com}

\author[Su Hu]{Su Hu}
\address{Department of Mathematical Sciences\\ Tsinghua University\\
100084 Beijing, China} \email{hus04@mails.tsinghua.edu.cn}

\date{}

\begin{abstract}
In this paper we study the monomial dynamical systems of dimension
one over finite fields from the viewpoints of arithmetic and graph
theory. We give formulas for the number of periodic points with
period $r$ and for that of cycles with length $r$ respectively.
Then we define and compute both asymptotic mean values and Dirichlet mean values
for periodic points and cycles.
Finally, we associate the monomial dynamical systems with
function fields to assist the computation of mean values.
\end{abstract}

\subjclass[2010]{Primary
37P25; Secondary 37P05, 37P35.}

\keywords{monomial dynamical system, finite field, graph theory,
global function field}

\maketitle

\section{Introduction}        
Finite dynamical systems are discrete-time dynamical systems with
finite state sets. Well-known examples of finite dynamical systems include cellular automata and
Boolean networks, which have found broad applications
in engineering, computer science, and more recently, computational
biology.

Finite dynamical systems over finite fields are most widely studied based on the many good properties of finite fields.
A monomial system of dimension $n$ over a finite field is defined by a
function $f=(f_{1},f_{2},\cdots,f_{n}):\mathbb{F}_{q}^{n} \to
\mathbb{F}_{q}^{n}$, $n \ge 1$, where $\mathbb{F}_{q}$ is a finite
field with $q$ elements, $\mathbb{F}_{q}^{n}$ is the vector space of
dimension $n$ over $\mathbb{F}_{q}$, and $f_{i}:\mathbb{F}_{q}^{n}
\to \mathbb{F}_{q}$ is a monomial of the form
$a_{i}x_{1}^{a_{i1}}x_{2}^{a_{i2}}\cdots x_{n}^{a_{in}}$.

Monomial systems of dimension $n$ over finite fields have
been studied in [2] and [3] from the viewpoint of graph theory,
especially for Boolean monomial systems. T. Vasiga and J. O. Shallit
have obtained several results about tails and cycles in the orbits of
repeated squaring over finite fields in [11], but some of their
results are based on the Extended Riemann Hypothesis. Then
W.-S. Chou and I. E. Shparlinski extended  their results to repeated
exponentiation with any fixed exponent in [1] without resorting to the Extended
Riemann Hypothesis. A. Khrennikov[6], A. Khrennikov and M. Nilsson [7],
M. Nilsson [8],[9] studied the monomial dynamical systems over local
fields.Wu [12] studied the monomial dynamical systems over finite
fields associated with the rational function field $\mathbb{F_{\it
q}}(T)$.

In this paper we study the monomial dynamical systems of dimension
one over finite fields, with the form $x^{n}$, from the viewpoints of arithmetic and graph
theory. We give formulas for the number of periodic points with
period $r$ and for that of cycles with length $r$ respectively.
 Then we define and compute both asymptotic mean values and Dirichlet mean values for periodic points and cycles.
Finally, we generalize Wu's ideas in [12] to study the monomial
dynamical systems over finite fields associated with any function
fields over finite fields, we also point out one mistake in
[12].

Our paper is organized as follows.

Section 2 studies basic properties of monomial systems from the
viewpoints of arithmetic and graph theory, including the properties
of preperiodic points, the conditions for the existence of periodic
points, the number of periodic points with period $r$, the number of
cycles with length $r$, the maximum period, the total number of
periodic points and cycles, the connectivity of the directed graph
associate to the monomial dynamic systems, and so on.

Section 3 introduces mean values of the number of periodic points and
cycles. We define and compute asymptotic mean value and Dirichlet mean value respectively.
In this case, these two mean values coincide.

Section 4 discusses mean values by viewing a
finite field as a residue class field of a function field.
Unfortunately, we find that there are infinitely many cases in which
the asymptotic mean value of the number of fixed points does not exist. It may be
very hard to compute the asymptotic mean value in general cases.
A possible reason is provided therein. Then we define and compute
the Dirichlet mean values for periodic points and cycles.

Section 5 investigates whether the above results are
applicable to the general case with the form $ax^{n}$. We find that
this question depends on ``$a$".

\section{Properties of monomial dynamical systems}    

Let $q$ be a power of a prime number $p$. Let $\mathbb{F}_{ q}$ be a
finite field with $q$ elements. For any positive integer $n \ge 2$,
we consider the dynamical system $f$: $\mathbb{F}_{q}$ $\to$
$\mathbb{F}_{q}$, where
\begin{equation}
 f(x)=x^{n}.
\end{equation}
Let
 $f^{\circ r}$ be the $r$-th iterate of $f$, i.e. $f^{\circ r}=\underbrace{f \circ f \circ \cdots \circ f}_{r~{\rm times}}$.

For every $x \in \mathbb{F}_{q}$, the $orbit$ of $x$ (or an $orbit$
of $f$) is the set
\begin{center}
$\{y\in \mathbb{F}_{q}: \exists k,m \in \mathbb{N}$, such that
$f^{\circ k}(x)=f^{\circ m}(y)\}$.
\end{center}
Obviously the orbits of $f$ give a partition of $\mathbb{F}_{ q}$.

\begin{defin}
{\rm Let $x_{r}=f^{\circ r}(x_{0})$. If
$x_{r}=x_{0}$ for some positive integer $r$, then $x_{0}$ is said to      
be a {\it periodic point} of $f$. If $r$ is the least natural number
with this property, then we call $r$ the {\it period} of $x_{0}$ and
$x_{0}$ an $r$-{\it periodic point}. A
periodic point of period 1 is called a {\it fixed point} of $f$. If
for some $r$, iterate $f^{\circ r}(x_{0})$ is periodic, we call $x_{0}$ a {\it
preperiodic point} of $f$. }
\end{defin}

\begin{defin}
{\rm Let $r$ be a positive integer. The set
$\gamma =\{x_{0},...,x_{r-1}\}$ of periodic points of period $r$ is      
said to be a {\it cycle} of $f$ if $x_{0}=f(x_{r-1})$ and
$x_{j}=f(x_{j-1})$ for $1 \le j \le r-1$. The {\it length of the
cycle} is the number of elements in $\gamma$. We call a $cycle$ with length $r$ an $r$-$cycle$.}
\end{defin}

 Since $\mathbb{F}_{q}$ is a finite set, each element
of $\mathbb{F}_{q}$ must be a preperiodic point of $f$.

By the definition of orbits and the discussion in the above
paragraph, we know that the orbits of $f$ correspond to the cycles
of $f$. Hence the total number of orbits of $f$ is equal to the
total number of cycles of $f$.

All fixed points of $f$, except $x=0$, are solutions of the equation
$x^{n-1}=1$ in  $\mathbb{F}_{q}$. The number of solutions of
the equation $x^{m}=1$ in $\mathbb{F}_{q}$ is given by the
following Lemma.
\begin{lem}
 The equation $x^{m}=1$ has $\gcd(m,q-1)$                          
solutions in $\mathbb{F}_{q}$.
\end{lem}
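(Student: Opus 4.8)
The plan is to pass to the multiplicative group $\mathbb{F}_{q}^{*}$ and exploit its cyclic structure. First I would observe that $x=0$ never satisfies $x^{m}=1$, so every solution lies in $\mathbb{F}_{q}^{*}=\mathbb{F}_{q}\setminus\{0\}$, a group of order $q-1$ under multiplication. The crucial structural fact I would invoke is that $\mathbb{F}_{q}^{*}$ is cyclic; fixing a generator $g$, every nonzero element is uniquely of the form $g^{k}$ with $0\le k\le q-2$.

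With this setup the equation becomes combinatorial. Writing $x=g^{k}$, the relation $x^{m}=1$ is equivalent to $g^{mk}=1$, which holds precisely when $(q-1)\mid mk$, since $g$ has exact order $q-1$. Thus the number of solutions equals the number of integers $k$ in $\{0,1,\dots,q-2\}$ with $(q-1)\mid mk$. To count these, set $d=\gcd(m,q-1)$ and write $m=dm'$, $q-1=de$ with $\gcd(m',e)=1$. Then the condition $(q-1)\mid mk$ reads $e\mid m'k$, and because $\gcd(m',e)=1$ this is equivalent to $e\mid k$. The multiples of $e=(q-1)/d$ in the range $\{0,1,\dots,q-2\}$ are exactly $0,e,2e,\dots,(d-1)e$, of which there are $d$. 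Hence the equation has exactly $\gcd(m,q-1)$ solutions.

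I would expect no serious obstacle here: the whole argument rests on the cyclicity of $\mathbb{F}_{q}^{*}$, which is a standard fact, after which the proof is elementary number theory. An alternative packaging regards the solution set as the kernel of the group endomorphism $x\mapsto x^{m}$ of $\mathbb{F}_{q}^{*}$ and applies the first isomorphism theorem together with the fact that in a cyclic group of order $N$ the $m$-th power map has image of order $N/\gcd(m,N)$; for a lemma this short, however, I find the explicit counting via a generator cleaner.
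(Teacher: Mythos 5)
Your proof is correct: the counting via a fixed generator $g$ of the cyclic group $\mathbb{F}_{q}^{*}$, reducing $x^{m}=1$ to $(q-1)\mid mk$ and hence to $e\mid k$ with $e=(q-1)/\gcd(m,q-1)$, is airtight, and you rightly note that $0$ is never a solution. The paper offers no argument of its own here, merely citing [5, Proposition 7.1.2], and your argument is precisely the standard proof underlying that cited result, so in substance you have taken the same (indeed the canonical) approach, just written out in full.
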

\begin{proof}
See [5, Proposition 7.1.2].
\end{proof}

All $r$-periodic points, $r \ge 2$, are solutions of the
equation
 $x^{n^{r}-1}=1$. Let $m_{j}= {\rm gcd}(n^{j}-1,q-1)$, $j \ge 1$.
 Here we rewrite the proof of Proposition 5.2 of Section 5 in [12]
 for the convenience of readers.

\begin{prop}
The system $f(x)=x^{n}$ has an r-cycle $(r \ge 2)$ in               
$\mathbb{F}_{q}$ $($i.e. f has an r-periodic point$)$
if and only if $m_{r}$ does not divide any $m_{j}$, $1 \le j \le
r-1$.
\end{prop}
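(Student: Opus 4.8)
The plan is to recast the period condition as a statement about the subgroups of the cyclic group $\mathbb{F}_q^\ast$. For each $j\ge 1$ set
\[
S_j=\{x\in\mathbb{F}_q^\ast : x^{n^j-1}=1\}.
\]
Since $\mathbb{F}_q^\ast$ is cyclic of order $q-1$, the preceding Lemma shows that $S_j$ is precisely the unique subgroup of $\mathbb{F}_q^\ast$ of order $m_j=\gcd(n^j-1,q-1)$. The first observation I would record is that a nonzero element $x_0$ satisfies $f^{\circ s}(x_0)=x_0$ exactly when $x_0^{n^s-1}=1$, i.e.\ when $x_0\in S_s$; and because $0$ is a fixed point it is never $r$-periodic for $r\ge 2$. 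Reading off the definition of the period as the least return time, this gives the clean characterization
\[
x_0\text{ has period }r \iff x_0\in S_r\setminus\bigcup_{j=1}^{r-1}S_j .
\]
Thus $f$ has an $r$-periodic point (equivalently an $r$-cycle) if and only if $S_r\not\subseteq\bigcup_{j=1}^{r-1}S_j$, and it remains only to decide when this set-theoretic condition holds.

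Here I would use that in a cyclic group containment of subgroups is governed by divisibility of their orders: for $1\le j\le r-1$ one has $S_r\subseteq S_j$ if and only if $m_r\mid m_j$. For the easy direction (in contrapositive form) I would argue that if $m_r\mid m_j$ for some $j<r$, then $S_r\subseteq S_j$, so every element of $S_r$ already lies in $S_j$ and hence has period at most $j<r$; consequently no $r$-periodic point exists. For the reverse direction, assume $m_r\nmid m_j$ for every $1\le j\le r-1$ and choose a generator $g$ of the cyclic group $S_r$, that is, an element of $\mathbb{F}_q^\ast$ of exact order $m_r$ (which exists as $m_r\mid q-1$). Then $g\in S_r$, while for each $j<r$ the membership $g\in S_j$ would force $\operatorname{ord}(g)=m_r\mid n^j-1$, hence $m_r\mid\gcd(n^j-1,q-1)=m_j$, contradicting the hypothesis. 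Therefore $g\in S_r\setminus\bigcup_{j<r}S_j$ has period exactly $r$, producing the desired $r$-cycle.

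The proof is largely a bookkeeping argument once the right objects are in place, so I do not expect a serious obstacle; the one point that deserves care is the translation between the dynamical notion of period and the arithmetic of the $m_j$. Concretely, I must make sure that $f^{\circ s}(x_0)=x_0$ is equivalent to $x_0\in S_s$ (this uses $x_0\ne 0$, which is automatic since every $S_s$ lies in $\mathbb{F}_q^\ast$), and that ``$x_0$ has period $r$'' is exactly ``$x_0\in S_r$ but $x_0\notin S_j$ for all $j<r$''. This is immediate from the definition of the period, and it is what makes the passage to the divisibility condition $m_r\nmid m_j$ legitimate. The remaining ingredient, that a cyclic group has an element realizing its full order and that subgroup containment matches divisibility of orders, is standard.
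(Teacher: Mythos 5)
Your proof is correct and takes essentially the same approach as the paper: both identify the solution sets $S_j$ (the paper's $H_j$) as subgroups of the cyclic group $\mathbb{F}_q^{*}$ of order $m_j$, and translate the existence of an $r$-periodic point into the subgroup-containment/divisibility dichotomy. The only divergence is in the converse direction, where the paper gives a terse counting remark (that $m_r > m_j$ for all $j \mid r$, $j<r$, forces an $r$-periodic point to exist) while you exhibit an explicit $r$-periodic point as a generator of $S_r$; your version is in fact the more complete justification, since the paper's counting step implicitly rests on exactly the fact you prove, namely that a generator of the cyclic group $S_r$ lies in no proper subgroup $S_j$.
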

\begin{proof}
For any  $1 \le j \le r$, let $H_{j}=\{\zeta \in
\mathbb{F}_{q}^{*}:\zeta^{n^{j}-1}=1\}=\langle\zeta_{j}\rangle$. Then $H_{j}$ is a subgroup of $\mathbb{F}_{q}^{*}$ and the
number of elements of $H_{j}$ is $|H_{j}|=m_{j}$.

Suppose $f$ has an $r$-cycle ($r \ge 2$). If there exists $1
\le j \le r-1$ such that $m_{r}|m_{j}$, then $H_{r}$ is a subgroup
of $H_{j}$. Hence all $r$-periodic points  belong to
$H_{j}$. By the definition of $r$-periodic point, this
leads to a contradiction.

Conversely, suppose that $m_{r} \nmid m_{j}$ for all $1 \le j \le
r-1$. Notice that $m_{r}$ is the number of nonzero elements
satisfying $f^{\circ r}(x)=x$. So by the hypothesis, for any $j|r$,
$m_{r}>m_{j}$. Hence $f$ must have an $r$-periodic points.
\end{proof}

Let $\mathcal{P}(r,q)$ be the number of $r$-periodic points of $f$.
Let $\mathcal{C}(r,q)$ be the number of $r$-cycles of $f$.
Each $r$-cycle contains $r$ $r$-periodic
points, then
\begin{center}
$\mathcal{C}(r,q)=\mathcal{P}(r,q)/r$.
\end{center}

By Lemma 2.3, $m_{j}+1$ is the number of solutions of $f^{\circ j}(x)=x$
in $\mathbb{F}_{q}$. Hence we have the following relation
between $m_{j}$, $\mathcal{P}(j,q)$ and $\mathcal{C}(j,q)$,
\begin{equation}
m_{j}+1=\sum\limits_{d|j}
\mathcal{P}(d,q)=\sum\limits_{d|j}d\mathcal{C}(d,q), {\rm ~for}~j
\ge 1.
\end{equation}

Let $\mu$ be {\it M\"obius function}. By M\"obius inversion formula
and (2.2), we obtain the following result.

\begin{prop}
 The number of $r$-periodic points of f                                       
and the number of $r$-cycles of f are given respectively
by
\end{prop}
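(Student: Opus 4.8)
The plan is to read off both formulas directly from the divisor-sum identity (2.2) via Möbius inversion, treating $\mathcal{P}(r,q)$ as the unknown arithmetic function. Equation (2.2) states that $m_j+1=\sum_{d\mid j}\mathcal{P}(d,q)$, i.e.\ the function $j\mapsto m_j+1$ is the divisor-summatory transform of $j\mapsto \mathcal{P}(j,q)$. This is exactly the shape to which the Möbius inversion formula applies, so I would invert it to obtain
\[
\mathcal{P}(r,q)=\sum_{d\mid r}\mu\!\left(\tfrac{r}{d}\right)(m_d+1).
\]

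The next step is to simplify the contribution of the constant $+1$. First I would split the sum as $\sum_{d\mid r}\mu(r/d)\,m_d+\sum_{d\mid r}\mu(r/d)$, and then invoke the standard identity $\sum_{d\mid r}\mu(r/d)=\sum_{e\mid r}\mu(e)=[r=1]$, which vanishes for $r\ge 2$ and equals $1$ for $r=1$. Consequently the $+1$ term survives only in the fixed-point case, giving $\mathcal{P}(1,q)=m_1+1$ and $\mathcal{P}(r,q)=\sum_{d\mid r}\mu(r/d)\,m_d$ for $r\ge 2$. As a sanity check, the value $\mathcal{P}(1,q)=m_1+1$ agrees with Lemma 2.3, since the fixed points are the $m_1$ nonzero solutions of $x^{n-1}=1$ together with $x=0$.

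Finally, I would pass to cycles using the elementary relation $\mathcal{C}(r,q)=\mathcal{P}(r,q)/r$ recorded just before (2.2), dividing each formula by $r$ to get $\mathcal{C}(1,q)=m_1+1$ and $\mathcal{C}(r,q)=\frac{1}{r}\sum_{d\mid r}\mu(r/d)\,m_d$ for $r\ge 2$. The only genuine subtlety—and thus the step most likely to trip one up—is the bookkeeping of the constant term: forgetting to separate the $\sum_{d\mid r}\mu(r/d)$ piece would incorrectly carry the $+1$ into every $r$, so the clean split into the cases $r=1$ and $r\ge 2$ is the crux. Everything else is a direct and routine application of Möbius inversion.
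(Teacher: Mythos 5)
Your proof is correct and is essentially the paper's own argument: the paper obtains (2.3) and (2.4) in exactly the same way, by applying the M\"obius inversion formula to the divisor-sum identity (2.2) and then using $\mathcal{C}(r,q)=\mathcal{P}(r,q)/r$; your inverted sum $\sum_{d\mid r}\mu(r/d)(m_d+1)$ is the paper's formula after the reindexing $d\mapsto r/d$. Your additional case split for $r=1$ versus $r\ge 2$ is a harmless extra simplification, though the ``subtlety'' you flag is overstated: keeping the $+1$ inside the $\mu$-weighted sum (as the paper does) is perfectly correct for every $r$, since those terms cancel automatically when $r\ge 2$.
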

\begin{equation}
\mathcal{P}(r,q)=\sum\limits_{d|r} \mu(d) ({\rm
gcd}(n^{r/d}-1,q-1)+1),
\end{equation}
\begin{equation}
\mathcal{C}(r,q)=\frac{\mathcal{P}(r,q)}{r}=\frac{1}{r}\sum\limits_{d|r}
\mu (d) ({\rm gcd}(n^{r/d}-1,q-1)+1).
\end{equation}

\begin{rem}
 {\rm If $f$ has no $r$-periodic points, then
$\mathcal{P}(r,q)=0$. For example, if $n=2$ and $q=5$, then
$m_{1}=m_{2}=1$. From Proposition 2.4, we get that there are no
2-periodic points, i.e. $\mathcal{P}(2,5)=0$. From
(2.3) we also get that $\mathcal{P}(2,5)=0$. }
\end{rem}

Let $m \ge 2$ be a natural number. Denote the largest divisor of
$q-1$ which is relatively prime to $m$ by $q^{*}(m)$ . If the unique
factorization of $q-1$ is $p_{1}^{e_{1}}p_{2}^{e_{2}} \dots
p_{k}^{e_{k}}$, then $q^{*}(m)=\prod\limits_{p_{i}\nmid
m}p_{i}^{e_{i}}$.

\begin{prop}
For any $\alpha\in\mathbb{F}_{q}^{*}$, $\alpha$ is a periodic point
if and only if its order {\rm ord($\alpha)$}$|q^{*}(n)$. Moreover,
if $\alpha$ is a periodic point, the length of this cycle equals to
the exponent of $n$ modulo {\rm ord($\alpha)$}, and each element in
this cycle has the same order.
\end{prop}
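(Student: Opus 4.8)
The plan is to translate everything into the cyclic group $\mathbb{F}_q^*$ and reduce the dynamics of $f(x)=x^n$ to congruences on $d:=\mathrm{ord}(\alpha)$. First I would observe that $\alpha$ is a periodic point exactly when $f^{\circ r}(\alpha)=\alpha^{n^r}=\alpha$ for some $r\ge 1$, equivalently $\alpha^{n^r-1}=1$. Since $\mathbb{F}_q^*$ is cyclic, this holds if and only if $d\mid n^r-1$, i.e. $n^r\equiv 1\pmod d$. Such an $r$ exists precisely when $n$ is a unit modulo $d$, that is, when $\gcd(n,d)=1$; indeed, if $\gcd(n,d)=1$ one may take $r=\mathrm{ord}_d(n)$, while if a prime divides both $n$ and $d$ then $n^r\equiv 0$ modulo that prime, ruling out $n^r\equiv 1$.

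Next I would identify $\gcd(n,d)=1$ with the stated divisibility $d\mid q^*(n)$, using that $\alpha\in\mathbb{F}_q^*$ forces $d\mid q-1$. In one direction, every prime factor of $q^*(n)$ fails to divide $n$ by definition, so any divisor $d$ of $q^*(n)$ satisfies $\gcd(n,d)=1$. Conversely, if $\gcd(n,d)=1$ and $d\mid q-1$, I would write $d=\prod_i p_i^{f_i}$ with $0\le f_i\le e_i$ using the factorization $q-1=\prod_i p_i^{e_i}$; coprimality forces $p_i\nmid n$ for each $p_i$ actually appearing in $d$, so each $p_i^{f_i}$ divides the corresponding factor $p_i^{e_i}$ of $q^*(n)$, whence $d\mid q^*(n)$. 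This settles the first assertion. For the cycle length, the period of $\alpha$ is by definition the least $r\ge 1$ with $n^r\equiv 1\pmod d$, which is exactly the multiplicative order of $n$ modulo $d=\mathrm{ord}(\alpha)$, as claimed. Finally, the cycle is $\{\alpha,\alpha^n,\dots,\alpha^{n^{r-1}}\}$, and since $\gcd(n,d)=1$ gives $\gcd(n^j,d)=1$ for all $j$, each element $\alpha^{n^j}$ has order $d/\gcd(n^j,d)=d$, so every element of the cycle shares the order of $\alpha$.

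Once the translation to orders is in place, each step is elementary group theory, so I do not anticipate a serious obstacle. The one point requiring care is the equivalence $\gcd(n,d)=1\iff d\mid q^*(n)$, where one must match the prime-power factorization of $d$ against the defining product for $q^*(n)$; keeping the exponent bookkeeping straight there is the only real technicality.
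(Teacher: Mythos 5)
Your proof is correct, and it is worth noting that the paper itself gives no argument at all here: its ``proof'' of this proposition is the single line ``See the first paragraph in the proof of [1, Theorem 1],'' deferring entirely to Chou--Shparlinski. Your write-up supplies the missing elementary argument, and it is the natural one: reduce periodicity of $\alpha$ to the congruence $n^r\equiv 1\pmod{d}$ with $d=\mathrm{ord}(\alpha)$, observe that such an $r$ exists exactly when $\gcd(n,d)=1$, and then match $\gcd(n,d)=1$ (together with $d\mid q-1$) against the prime factorization defining $q^*(n)$; the statements about the cycle length being $\mathrm{ord}_d(n)$ and about orders being constant along the cycle then follow from the formula $\mathrm{ord}(\alpha^{k})=d/\gcd(k,d)$. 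All steps check out, including the two directions of the equivalence $\gcd(n,d)=1\iff d\mid q^*(n)$. The only point you pass over silently is that the $r$ elements $\alpha,\alpha^n,\dots,\alpha^{n^{r-1}}$ are pairwise distinct (so that the cycle length really is $r$); this follows immediately from the minimality of $r=\mathrm{ord}_d(n)$, since $\alpha^{n^i}=\alpha^{n^j}$ with $0\le i<j<r$ would force $n^{j-i}\equiv 1\pmod d$. In short, your proof is self-contained where the paper's is not, which is a genuine improvement for the reader, and it almost certainly coincides with the argument hidden inside the cited reference.
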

\begin{proof}
See the first paragraph in the proof of [1, Theorem 1].
\end{proof}

We use $\varphi$ to denote {\it Euler's $\varphi$-function}. We have
the following lemma.

\begin{lem}
We have
 \begin{center}
 {\rm gcd}$(n^{r}-1,q-1)$ = {\rm gcd}$(n^{r}-1,q^{*}(n))$,
 \end{center}for r $\ge 1$.
\end{lem}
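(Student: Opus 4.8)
The plan is to exploit the fact that $q^{*}(n)$ differs from $q-1$ only in the prime-power factors coming from primes that divide $n$, together with the elementary observation that such primes can never divide $n^{r}-1$. First I would split $q-1$ according to the definition of $q^{*}(n)$: writing the factorization $q-1=p_{1}^{e_{1}}\cdots p_{k}^{e_{k}}$, set $d=\prod_{p_{i}\mid n}p_{i}^{e_{i}}$, so that $q-1=q^{*}(n)\cdot d$. By construction $q^{*}(n)$ and $d$ are built from disjoint sets of primes, hence $\gcd(q^{*}(n),d)=1$.

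The key step is to show $\gcd(n^{r}-1,d)=1$. For any prime $p$ dividing $n$ we have $n\equiv 0\pmod{p}$, hence $n^{r}\equiv 0$ and $n^{r}-1\equiv -1\pmod{p}$, so $p\nmid n^{r}-1$. Since every prime divisor of $d$ divides $n$ by its definition, no prime divides both $d$ and $n^{r}-1$, which yields $\gcd(n^{r}-1,d)=1$.

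Finally I would combine these facts using the multiplicativity of the gcd over coprime factors. Because $\gcd(q^{*}(n),d)=1$, we may split
\[
\gcd(n^{r}-1,\,q-1)=\gcd\bigl(n^{r}-1,\,q^{*}(n)\,d\bigr)=\gcd(n^{r}-1,\,q^{*}(n))\cdot\gcd(n^{r}-1,\,d),
\]
and substituting $\gcd(n^{r}-1,d)=1$ from the previous paragraph gives the claimed equality $\gcd(n^{r}-1,q-1)=\gcd(n^{r}-1,q^{*}(n))$. There is no genuine obstacle in this argument; the only point requiring a little care is the justification of the splitting of the gcd, which rests precisely on the coprimality of $q^{*}(n)$ and $d$ established in the first step.
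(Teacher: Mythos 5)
Your proof is correct. The paper in fact states this lemma without any proof, treating it as elementary; your argument---writing $q-1=q^{*}(n)\cdot d$ where $d=\prod_{p_{i}\mid n}p_{i}^{e_{i}}$ is coprime to $q^{*}(n)$, noting that $n^{r}-1\equiv -1$ modulo every prime dividing $n$ so that $\gcd(n^{r}-1,d)=1$, and then splitting the gcd over the coprime factorization---is exactly the standard justification, and it correctly supplies the reasoning the paper leaves implicit.
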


Let $\hat{r}(n)$ be the exponent of $n$ modulo $q^{*}(n)$, i.e.
$\hat{r}(n)$ is the least positive integer satisfying
$q^{*}(n)|(n^{\hat{r}(n)}-1)$.

M.Nilsson [8] proved the following result for p-adic dynamical
systems. We follow his idea to get a similar result for dynamical
systems over finite fields.

\begin{prop}
If M is a positive integer not less than $
\hat{r}(n)$, then                                                     
\begin{equation}
\sum\limits_{r=1}^{M}\mathcal{P}(r,q) = q^{*}(n)+1.  \notag
\end{equation}
\end{prop}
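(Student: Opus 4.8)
The plan is to read the partial sum $\sum_{r=1}^{M}\mathcal{P}(r,q)$ as a count of \emph{all} periodic points of $f$ and then to evaluate that total directly from the cyclic structure of $\mathbb{F}_q^{*}$ together with Proposition 2.7. The two things to establish are: (i) that the hypothesis $M \ge \hat{r}(n)$ guarantees no periodic point has period exceeding the range $\{1,\dots,M\}$; and (ii) that the total number of periodic points is $q^{*}(n)+1$.

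First I would bound the periods. By Proposition 2.7, a nonzero periodic point $\alpha$ has order $d=\mathrm{ord}(\alpha)$ dividing $q^{*}(n)$, and its period equals the exponent of $n$ modulo $d$. Since $q^{*}(n)$ is by construction coprime to $n$, so is its divisor $d$, and the relevant exponents are well defined. From $n^{\hat{r}(n)}\equiv 1 \pmod{q^{*}(n)}$ and $d \mid q^{*}(n)$ I get $n^{\hat{r}(n)}\equiv 1 \pmod{d}$, so the period of $\alpha$ divides $\hat{r}(n)$ and is therefore at most $\hat{r}(n)\le M$. The fixed point $0$ has period $1$. Hence every periodic point has period in $\{1,\dots,M\}$, which is exactly the assertion that $\sum_{r=1}^{M}\mathcal{P}(r,q)$ equals the total number of periodic points of $f$.

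Next I would count that total. By Proposition 2.7 the nonzero periodic points are precisely the $\alpha\in\mathbb{F}_q^{*}$ with $\mathrm{ord}(\alpha)\mid q^{*}(n)$. Since $\mathbb{F}_q^{*}$ is cyclic of order $q-1$ and $q^{*}(n)\mid q-1$, these are exactly the elements of the unique subgroup of order $q^{*}(n)$, giving $q^{*}(n)$ of them. Adjoining the fixed point $0$ yields $q^{*}(n)+1$ periodic points in total, which is the claimed value.

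A tidier bookkeeping route, matching the M\"obius relation (2.2), would instead note via Lemma 2.8 that $m_{\hat{r}(n)}=\gcd(n^{\hat{r}(n)}-1,q^{*}(n))=q^{*}(n)$, whence $\sum_{d\mid \hat{r}(n)}\mathcal{P}(d,q)=q^{*}(n)+1$; one then needs only $\mathcal{P}(r,q)=0$ for every $r\le M$ with $r\nmid\hat{r}(n)$, which is the period-divisibility already obtained above. In either version the single step requiring genuine care is confirming that no periodic point escapes the range $\{1,\dots,M\}$ — that is, correctly coupling the period bound from Proposition 2.7 with the inclusion of the fixed point $0$; everything else is a routine consequence of the cyclicity of $\mathbb{F}_q^{*}$.
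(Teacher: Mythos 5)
Your proof is correct, but it follows a genuinely different route from the paper's. You lean entirely on Proposition 2.7: from it you deduce that every nonzero periodic point $\alpha$ has $\mathrm{ord}(\alpha)\mid q^{*}(n)$ and period equal to the exponent of $n$ modulo $\mathrm{ord}(\alpha)$, which divides $\hat{r}(n)$ since $n^{\hat{r}(n)}\equiv 1 \pmod{\mathrm{ord}(\alpha)}$; then cyclicity of $\mathbb{F}_{q}^{*}$ identifies the nonzero periodic points as exactly the unique subgroup of order $q^{*}(n)$, so the total is $q^{*}(n)+1$ and the sum over $r\le M$ captures all of it. The paper never touches Proposition 2.7 here: it works inside its counting apparatus, showing $\mathcal{P}(r,q)=0$ first for $r>\hat{r}(n)$ (via $m_{r}\mid m_{\hat{r}(n)}=q^{*}(n)$ and Proposition 2.4) and then for $r\nmid\hat{r}(n)$ (via a least-exponent divisibility argument, again feeding into Proposition 2.4), and finally evaluates $\sum_{d\mid\hat{r}(n)}\mathcal{P}(d,q)=m_{\hat{r}(n)}+1=q^{*}(n)+1$ using (2.2) and Lemma 2.8 --- which is essentially the ``tidier bookkeeping route'' you sketch at the end, except that you obtain the needed vanishing from the period-divisibility of Proposition 2.7 rather than from Proposition 2.4. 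What your approach buys is brevity and structure: you exhibit the set of periodic points explicitly (the order-$q^{*}(n)$ subgroup together with $0$), and Remark 2.10 and Corollary 2.12 fall out as immediate byproducts. What the paper's approach buys is self-containedness within its own counting framework: Proposition 2.7 is imported from Chou--Shparlinski, whereas the paper's vanishing arguments reuse only the $m_{j}$ machinery ((2.2), Lemma 2.8, Proposition 2.4) that is already developed and used elsewhere in Section 2.
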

\begin{proof}
First we prove that $\mathcal{P}(r,q)=0$ if $r>\hat{r}(n)$. Since
$m_{\hat{r}(n)}=$gcd$(n^{\hat{r}(n)}-1,q-1)$ =
gcd$(n^{\hat{r}(n)}-1,q^{*}(n))$ = $q^{*}(n)$, and by Lemma 2.8, we
have $m_{r}|m_{\hat{r}(n)}$,  for every $r>\hat{r}(n)$. We have
$\mathcal{P}(r,q)=0$ by Proposition 2.4.

Next we prove that if $r \nmid \hat{r}(n)$ then
$\mathcal{P}(r,q)=0$. Let $l_{1}$ be a positive divisor of
$q^{*}(n)$. Let $s$ be the least positive integer such that $n^{s}-1
\equiv 0 ($mod $l_{1})$. By the division algorithm we have
$\hat{r}(n)=ks+r_{1}$, where $k,r_{1} \in \mathbb{Z}$ and $0\le r_{1}<s$. Since
$n^{\hat{r}(n)} \equiv 1 ($mod $q^{*}(n))$,  we have $n^{\hat{r}(n)}
\equiv 1 ($mod $l_{1})$. This implies that
\begin{center}
$1 \equiv n^{\hat{r}(n)} \equiv n^{ks+r_{1}} \equiv
(n^{s})^{k}n^{r_{1}} \equiv n^{r_{1}}($mod $l_{1})$.
\end{center}
By the definition of $s$, we have $r_{1}=0$. Thus $s|\hat{r}(n)$.

Suppose $r \nmid \hat{r}(n)$. Since
$m_{r}=$gcd$(n^{r}-1,q-1)=$gcd$(n^{r}-1,q^{*}(n))|q^{*}(n)$, if $s$
is the least positive number such that $n^{s}-1 \equiv 0 ($mod
$m_{r})$, we have $s|\hat{r}(n)$. By the definition of $s$ we must
have $s<r$. But $m_{r}|m_{s}$. Hence $\mathcal{P}(r,q)=0$ by
Proposition 2.4.

Thus
\begin{center}
$\sum\limits_{r=1}^{M}\mathcal{P}(r,q)=\sum\limits_{r|\hat{r}(n)}\mathcal{P}(r,q)$.
\end{center}

By (2.2) and Lemma 2.8, we have
\begin{center}
 $\sum\limits_{d|r} \mathcal{P}(d,q)=$gcd$(n^{r}-1,q^{*}(n))+1$.
 \end{center}
Finally,we get our result by setting $r=\hat{r}(n)$ in the above
formula.
\end{proof}

\begin{rem}
{\rm From the proof of Proposition 2.9, we can only have $r$-cycles
 (i.e. $r$-periodic points ) such that $r|\hat{r}(n)$.}
\end{rem}

\begin{cor}
The maximum length of cycles of f is $\hat{r}(n)$.                 
\end{cor}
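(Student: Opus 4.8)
The plan is to prove the corollary in two complementary steps: first show that $\hat{r}(n)$ is an upper bound for every cycle length, and then exhibit an actual cycle of length exactly $\hat{r}(n)$, so that the bound is attained. For the upper bound I would simply invoke Remark 2.10, which records the conclusion extracted from the proof of Proposition 2.9: every $r$-cycle of $f$ satisfies $r \mid \hat{r}(n)$. In particular $r \le \hat{r}(n)$, so no cycle can be longer than $\hat{r}(n)$.

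For the existence of a cycle of length $\hat{r}(n)$ (assuming $\hat{r}(n) \ge 2$; the case $\hat{r}(n) = 1$ is trivial since $x = 1$ is always a fixed point), I would apply the criterion of Proposition 2.4 with $r = \hat{r}(n)$. By Lemma 2.8 together with the defining property $q^{*}(n) \mid (n^{\hat{r}(n)} - 1)$ of $\hat{r}(n)$, the first task is to compute
\[
m_{\hat{r}(n)} = \gcd(n^{\hat{r}(n)} - 1, q - 1) = \gcd(n^{\hat{r}(n)} - 1, q^{*}(n)) = q^{*}(n).
\]
Next, for each $1 \le j < \hat{r}(n)$, since $\hat{r}(n)$ is the \emph{least} exponent with $q^{*}(n) \mid (n^{\hat{r}(n)} - 1)$, we have $q^{*}(n) \nmid (n^{j} - 1)$, so $m_{j} = \gcd(n^{j} - 1, q^{*}(n))$ is a proper divisor of $q^{*}(n)$ and hence $m_{j} < q^{*}(n) = m_{\hat{r}(n)}$. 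A strictly larger positive integer cannot divide a strictly smaller one, so $m_{\hat{r}(n)} \nmid m_{j}$ for every $1 \le j < \hat{r}(n)$. By Proposition 2.4, $f$ therefore has an $\hat{r}(n)$-cycle. Combining the two steps, the maximum cycle length is at most $\hat{r}(n)$ and this value is realized, so it equals $\hat{r}(n)$.

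I do not expect a genuine obstacle here: once $m_{\hat{r}(n)} = q^{*}(n)$ is computed via Lemma 2.8, the divisibility hypothesis of Proposition 2.4 follows immediately from the strict size comparison $m_{j} < m_{\hat{r}(n)}$, which bypasses any delicate divisibility argument. The only point requiring a little care is the boundary case $\hat{r}(n) = 1$, where Proposition 2.4 (stated for $r \ge 2$) does not directly apply and one instead notes directly that fixed points always exist.
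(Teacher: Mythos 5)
Your proof is correct and follows essentially the same route as the paper's: the upper bound comes from Remark 2.10, and the existence of an $\hat{r}(n)$-cycle comes from computing $m_{\hat{r}(n)}=q^{*}(n)$ via Lemma 2.8 and then verifying the divisibility criterion of Proposition 2.4 using the minimality of $\hat{r}(n)$ (the paper phrases this last step as a contradiction, you phrase it directly via $m_{j}<m_{\hat{r}(n)}$, but the content is identical). The paper handles the trivial case $\hat{r}(n)=1$ with the fixed point $0$ rather than $1$; both work.
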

\begin{proof}
By Remark 2.10, it suffices to show that there exists a $\hat{r}(n)$-cycle of $f$.
 If $\hat{r}(n)=1,$ this result is trivial,
since 0 is a fixed point. If $\hat{r}(n)>1,$ suppose there exists an
integer $j$, $1 \le j <\hat{r}(n)$, such that
$m_{\hat{r}(n)}|m_{j}$. Then $m_{j}=m_{\hat{r}(n)}=q^{*}(n)$. By the
definition of $\hat{r}(n)$, we have $j \ge \hat{r}(n)$. This leads
to a contradiction. Hence we have $m_{\hat{r}(n)}\nmid m_{j}$ for
all $j$, $1 \le j <\hat{r}(n)$. Then we get our result from
Proposition 2.4.
\end{proof}

\begin{cor}
f has $q^{*}(n)+1$ periodic points in $\mathbb{F}_{q}.$             
\end{cor}

If each element of $\mathbb{F}_{q}$ is a periodic point of $f$,
then $q^{*}(n)+1=q$, thus gcd$(q-1,n)=1$. In fact $f$ is bijective
if and only if gcd$(q-1,n)=1$. Therefore, each element of
$\mathbb{F}_{q}$ is a periodic point of $f$ if and only if $f$
is bijective.

\begin{cor} The total number of cycles (or orbits) of f is given by       
\begin{equation}
\sum\limits_{r|\hat{r}(n)}
\mathcal{C}(r,q)=\sum\limits_{r|\hat{r}(n)}\frac{1}{r}\sum\limits_{d|r}
\mu (d) ({\rm gcd}(n^{r/d}-1,q-1)+1).  \notag
\end{equation}
\end{cor}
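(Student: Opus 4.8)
The plan is to read the statement as a bookkeeping identity: the left-hand side sums the number of $r$-cycles over all \emph{realizable} cycle lengths $r$, and the right-hand side is obtained by substituting the closed form already computed for $\mathcal{C}(r,q)$. So the whole argument is an assembly of earlier results rather than a fresh computation.

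First I would set up the counting. By the discussion following Definition 2.2, the orbits of $f$ are in bijection with the cycles of $f$, and distinct cycles are disjoint; hence the total number of cycles (equivalently, of orbits) is exactly $\sum_{r \ge 1} \mathcal{C}(r,q)$, where the index $r$ ranges a priori over all positive cycle lengths. The task is therefore to pin down which lengths actually occur.

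Next I would invoke Remark 2.10, which itself rests on Proposition 2.9 and Corollary 2.11: the system $f$ can possess an $r$-cycle only when $r \mid \hat{r}(n)$. Concretely, for any $r$ with $r \nmid \hat{r}(n)$ the proof of Proposition 2.9 shows $\mathcal{P}(r,q)=0$, and since $\mathcal{C}(r,q)=\mathcal{P}(r,q)/r$, such a term vanishes. Consequently every summand with $r \nmid \hat{r}(n)$ drops out, and the infinite-looking sum collapses to the finite sum $\sum_{r \mid \hat{r}(n)} \mathcal{C}(r,q)$, which is the left-hand side of the asserted identity. Finally I would substitute the explicit expression for $\mathcal{C}(r,q)$ from equation (2.4) of Proposition 2.5 into each surviving term, producing the displayed double sum verbatim.

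There is no substantive obstacle here; the statement is a packaging of Proposition 2.5 together with the support condition from Proposition 2.9. The only point deserving care is the justification that the realized cycle lengths are \emph{exactly} captured by the divisors of $\hat{r}(n)$ — that no length is silently omitted and that the range is finite — but this is precisely the content of Remark 2.10 and Corollary 2.11, so it may be cited directly rather than reproved.
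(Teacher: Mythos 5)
Your proposal is correct and follows essentially the same route as the paper: the paper's proof is exactly the combination of Remark 2.10 (only lengths $r$ with $r\mid\hat{r}(n)$ can carry cycles, since $\mathcal{P}(r,q)=0$ otherwise by the proof of Proposition 2.9) with the substitution of formula (2.4) for $\mathcal{C}(r,q)$. Your additional remarks on the orbit--cycle bijection and the vanishing of terms with $r\nmid\hat{r}(n)$ merely make explicit what the paper leaves implicit.
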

\begin{proof}
By Remark 2.10 and (2.4), we get the result.
\end{proof}

\begin{exa}
{\rm Let us consider this monomial system $f(x)=x^{2}$ over
$\mathbb{F}_{7}$. From Proposition 2.5 we know that $f$ has two
1-periodic points, two 2-periodic points,
two 1-cycles and one 2-cycle. From Corollary
2.11, Corollary 2.12 and Corollary 2.13, we know that the maximum
lengths of cycles of $f$ is $2$, so $f$ has four periodic points and
three cycles respectively. We can also get the above results by
calculating directly.}
\end{exa}

The dynamics of $f$ can be represented by the {\it state space} of
$f$, denoted by $S(f)$, which is a directed graph. The vertex set of
$S(f)$ is $\mathbb{F}_{q}$. We draw a directed edge from $a$ to
$b$ if $f(a)=b$. It should be noted that a directed edge from a
vertex to itself is admissible, and the length of a directed cycle
need not to be greater than $2$. Hence $S(f)$ encodes all state
transitions of $f$, and has the property that every vertex has
out-degree exactly equal to $1$. Moreover, a connected component of
$S(f)$ coincides with an orbit of $f$, and a directed cycle of
$S(f)$ coincides with a cycle of $f$ which has been defined in
Definition 2.2. We have the following proposition on the
connectivity of $S(f)$.

\begin{prop}
$S(f)$ is not connected.
\end{prop}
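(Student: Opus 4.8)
The plan is to show that $S(f)$ has at least two connected components by isolating a single orbit that is a proper, nonempty subset of the vertex set $\mathbb{F}_q$. As noted just before the proposition, a connected component of $S(f)$ coincides with an orbit of $f$, so it suffices to exhibit one such proper orbit.

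First I would observe that $0$ is a fixed point, since $f(0) = 0^n = 0$ for $n \ge 2$. I claim that the orbit of $0$ is exactly $\{0\}$. By the definition of orbit, an element $y$ lies in the orbit of $0$ precisely when $f^{\circ m}(y) = f^{\circ k}(0) = 0$ for some $k, m$; that is, when $y^{n^m} = 0$ for some $m$. Because $\mathbb{F}_q$ is a field and therefore has no zero divisors, $y^{n^m} = 0$ forces $y = 0$. Hence the only vertex whose orbit is $0$ is $0$ itself, and $\{0\}$ is a full connected component of $S(f)$.

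Since $q \ge 2$, the field $\mathbb{F}_q$ contains an element different from $0$; for instance $1$, which is itself a fixed point and so lies in an orbit not containing $0$. This element lies outside the component $\{0\}$, so $S(f)$ has at least two distinct connected components and is therefore not connected.

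There is no genuine obstacle in this argument. The only point requiring a moment of care is confirming that $\{0\}$ is a complete connected component rather than merely a fixed point with possible incoming edges; this rests entirely on the absence of zero divisors in $\mathbb{F}_q$, after which the conclusion is immediate. An equivalent route would be to remark that $0$ and $1$ are both fixed points with all their iterates equal to $0$ and $1$ respectively, so they can lie in a common orbit only if $0 = 1$, which fails in any field.
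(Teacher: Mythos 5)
Your proof is correct, but it takes a different route from the paper's. The paper first establishes a structural fact about functional graphs: since every vertex of $S(f)$ has out-degree exactly $1$, each connected component contains exactly one directed cycle; it then observes that $\{0\}$ and $\{1\}$ are two distinct cycles, so there must be at least two components. You instead pin down the orbit of $0$ completely: because $\mathbb{F}_q$ is a field and hence has no zero divisors, $y^{n^m}=0$ forces $y=0$, so $\{0\}$ is an entire connected component, and $1\neq 0$ lies outside it. Your argument is more algebraic and arguably more self-contained --- it avoids the one-cycle-per-component claim altogether and additionally shows the stronger fact that $0$ is an isolated vertex of $S(f)$ (with no incoming edges from other vertices), which is exactly what justifies the later passage to $S(f^{*})$ by deleting $0$. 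The paper's argument is more general: it applies to any finite dynamical system possessing two distinct cycles, with no reference to field structure, and the structural lemma it proves is reused immediately afterwards, where the paper identifies the number of connected components of $S(f)$ with the number of cycles of $f$. Both proofs are valid; yours trades that reusable generality for a sharper description of one component.
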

\begin{proof}
First we claim that each connected component of $S(f)$ contains only
one directed cycle. Since if one connected component contains two or
more cycles, then there exists one vertex of this component which
has out-degree not equal to $1$.

Note that $\{0\}$ and $\{1\}$ are two cycles of $f$, then there are
more than one directed cycles of $S(f)$. Hence $S(f)$ is not
connected.
\end{proof}

From the proof of Proposition 2.15, we know that each connected
component of $S(f)$ contains only one directed cycle. Then the
number of connected components of $S(f)$ equals to the number of
cycles of $f$.

We can get a subdigraph of $S(f)$, denoted by $S(f^{*})$, which is
induced by $f^{*}=f|_{\mathbb{F}_{q}^{*}}$. That is we obtain
$S(f^{*})$  from $S(f)$ by deleting the vertex $\{0\}$. We also have
the following Proposition on the connectivity of $S(f^{*})$.

\begin{prop}

{\rm (1)~}$S(f^{*})$ is connected if and only if $q^{*}(n)=1$.

{\rm (2)~}$S(f^{*})$ is not strongly connected.
\end{prop}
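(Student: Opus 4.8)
The plan is to reduce both parts to the structure of $S(f)$ already established in this section: $S(f)$ is a functional graph in which every vertex has out-degree $1$, each of its connected components contains exactly one directed cycle, and the number of components equals the number of cycles of $f$.

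For part (1), I would first note that $\{0\}$ is an isolated component of $S(f)$: since $f(0)=0$ and $x^{n}=0$ forces $x=0$ in a field, the vertex $0$ carries only a self-loop and has no other incident edges. Deleting it therefore removes exactly one component and leaves every other component intact, so the connected components of $S(f^{*})$ are precisely the components of $S(f)$ different from $\{0\}$. Since each component of $S(f)$ contains a single cycle and the number of components equals the number of cycles of $f$, it follows that the number of components of $S(f^{*})$ equals the number of cycles of $f$ contained in $\mathbb{F}_{q}^{*}$. Thus $S(f^{*})$ is connected if and only if $f$ has exactly one cycle in $\mathbb{F}_{q}^{*}$. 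I would then invoke Proposition 2.7 and Corollary 2.12: the periodic points of $f$ in $\mathbb{F}_{q}^{*}$ are exactly the $\alpha$ with $\mathrm{ord}(\alpha)\mid q^{*}(n)$, and there are $q^{*}(n)$ of them. Because $1$ is always a fixed point and hence forms the cycle $\{1\}$ by itself, the counting splits cleanly: if $q^{*}(n)=1$ then $1$ is the only periodic point and $\{1\}$ is the only cycle, so $S(f^{*})$ is connected; if $q^{*}(n)>1$ there is a periodic point $\alpha\neq 1$, which lies in a cycle distinct from $\{1\}$, producing at least two cycles and hence a disconnected $S(f^{*})$. Combining the two implications gives the stated equivalence.

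For part (2), the key observation is again that $1$ is a fixed point of $f^{*}$, so in $S(f^{*})$ the unique out-edge of the vertex $1$ points to itself. Consequently the only vertex reachable from $1$ along directed paths is $1$ itself. Whenever $\mathbb{F}_{q}^{*}$ has more than one element there is a vertex $w\neq 1$ not reachable from $1$, so $S(f^{*})$ cannot be strongly connected. (The sole degenerate situation is $q=2$, where $\mathbb{F}_{q}^{*}=\{1\}$ is a single looped vertex; one should keep this trivial case in mind.)

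I do not anticipate a genuine obstacle, since everything rests on Proposition 2.15 and its corollaries. The two points deserving care are first, justifying in part (1) that removing $0$ neither merges nor splits the remaining components — immediate once $0$ is seen to be isolated — and second, the clean bookkeeping that the fixed point $1$ always accounts for exactly one cycle, so that ``one cycle in $\mathbb{F}_{q}^{*}$'' is equivalent to ``$q^{*}(n)=1$''.
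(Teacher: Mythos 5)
Your proof is correct and follows essentially the same route as the paper's: part (1) reduces connectivity of $S(f^{*})$ to counting cycles via the component--cycle correspondence (from Proposition 2.15) together with Corollary 2.12, and part (2) uses the fixed point $1$ to obstruct strong connectivity. Your additional care --- checking that $0$ is an isolated vertex of $S(f)$ so that deleting it leaves the other components intact, and flagging the degenerate case $q=2$, where $S(f^{*})$ is a single looped vertex and statement (2) as written actually fails under the usual convention --- only sharpens an argument the paper states more tersely.
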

\begin{proof}
(1)Suppose that $S(f^{*})$ is connected. Then $S(f^{*})$ has only
one connected component, so $S(f^{*})$ has only one directed cycle,
then $f^{*}$ has only one cycle, that is $\{1\}$. So $f$ has only
two periodic points. By Corollary 2.12, we have $q^{*}(n)=1$.

Conversely, suppose that $q^{*}(n)=1$. By Corollary 2.12, $f^{*}$
has only one cycle. Hence $S(f^{*})$ is connected.

(2)Suppose that $S(f^{*})$ is strongly connected. Then $f^{*}$ has
only one cycle, and each element of $\mathbb{F}_{q}^{*}$ lies in
this cycle. But $f^{*}$ has a 1-cycle, that is $\{1\}$, and no other elements
of $\mathbb{F}_{q}^{*}$ lie in this cycle.
\end{proof}

We call $f$ a {\it fixed point system}(See [2],[3]), if all directed
cycles of $S(f)$ have length $1$, that is the number of vertex in
every strongly connected component of $S(f)$ is one. By Corollary
2.11, we have the following proposition.

\begin{prop}
$f$ is a fixed point system if and only if $\hat{r}(n)=1$, i.e.
$q^{*}(n)|(n-1)$.
\end{prop}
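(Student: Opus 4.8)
The plan is to reduce the statement directly to Corollary 2.11, which identifies the maximum length of cycles of $f$ as $\hat{r}(n)$. First I would unwind the definition of a fixed point system given just above the proposition: $f$ is a fixed point system precisely when every directed cycle of $S(f)$ has length $1$. Since each cycle has length at least $1$ and $S(f)$ always contains at least one cycle (for instance the fixed point $0$, so that the maximum is genuinely attained), the condition ``every directed cycle has length $1$'' is equivalent to ``the maximum cycle length equals $1$''. By Corollary 2.11 this maximum is exactly $\hat{r}(n)$, so $f$ is a fixed point system if and only if $\hat{r}(n)=1$.

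For the remaining equivalence I would appeal to the definition of $\hat{r}(n)$ as the least positive integer satisfying $q^{*}(n)\mid(n^{\hat{r}(n)}-1)$. If $\hat{r}(n)=1$, then the defining divisibility reads $q^{*}(n)\mid(n-1)$. Conversely, if $q^{*}(n)\mid(n-1)$, then the exponent $1$ already satisfies the defining congruence, and since $\hat{r}(n)$ is by definition the least positive integer with this property, we must have $\hat{r}(n)=1$. This closes the chain of equivalences asserted in the proposition.

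The main thing to observe is that essentially all of the work has already been done in Corollary 2.11 (which in turn rests on Proposition 2.4 and Remark 2.10); the present argument is purely a matter of matching definitions, so there is no genuinely hard step. The one point deserving a moment's care is the elementary reduction in the first paragraph: because cycle lengths are positive integers, bounding the maximum by $1$ forces every cycle to be a single fixed point, and one should confirm that $S(f)$ always carries a cycle so that the maximum length is actually realized — which is immediate from the fixed point $0\in\mathbb{F}_{q}$.
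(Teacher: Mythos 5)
Your proof is correct and follows exactly the route the paper intends: the paper offers no separate argument for this proposition beyond the remark ``By Corollary 2.11, we have the following proposition,'' and your write-up simply fills in the definitional unwinding (cycles of $S(f)$ are cycles of $f$, the maximum is attained at the fixed point $0$, and $\hat{r}(n)=1$ is equivalent to $q^{*}(n)\mid(n-1)$ by minimality in the definition of $\hat{r}(n)$). Nothing is missing and nothing diverges from the paper's approach.
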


\section{Mean values of periodic points and cycles}        

In this section we discuss the mean values of periodic points and
cycles of $f$ in $\mathbb{F}_{q}$ with respect to $q$.
Let $\tau(m)$ be the number of positive divisors of the positive
integer $m$. Let $t$ be a positive integer. Let $\pi(t)$ be the
number of primes less than or equal to $t$.

\begin{defin}                                                        
    {\rm Let $s$ be a positive integer. If the limit $\lim\limits_{t \to \infty}\frac{1}{\pi(t)}\sum\limits_{{\rm }p \le t}{\mathcal{P}}(r,p^{s})$
   exists (including $\infty$), then we call it the {\it asymptotic mean value} of the number of $r$-periodic points of $f$ in $\mathbb{F_{\it p^{s}}}$, when $p\to \infty$, and denote it by $N(r,s)$.}
\end{defin}

Similarly, we can also define the asymptotic mean values for $r$-cycles, for all periodic points and for all cycles, respectively.

Let $l$ and $s$ be two positive integers respectively. Let
$v_{s}(l)$ be the number of solutions of $x^{s}= 1$ in $\mathbb{Z}/l\mathbb{Z}$. M.Nilsson [9] proved the following theorem.
\begin{thm}
{\rm (M.Nilsson)}

Let t and m be two positive integers, then
   \begin{equation}
      I_{m}(s)\triangleq\lim\limits_{t \to \infty}\frac{1}{\pi(t)}\sum\limits_{{\rm }p \le t}{\rm gcd}(m,p^{s}-1)=\sum\limits_{l|m}v_{s}(l).
   \end{equation}
In particular if s=$1$, then
  \begin{equation}
      I_{m}(1)=\lim\limits_{t \to \infty}\frac{1}{\pi(t)}\sum\limits_{{\rm }p \le t}{\rm gcd}(m,p-1)=\tau(m), \notag
  \end{equation}
where the sum is over all primes p $\le$ t.
\end{thm}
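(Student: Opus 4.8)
The plan is to decompose the greatest common divisor by means of the Euler totient identity and then to invoke Dirichlet's theorem on primes in arithmetic progressions. First I would apply the classical identity $n=\sum_{d\mid n}\varphi(d)$ to $n=\gcd(m,p^{s}-1)$. Because a divisor $d$ divides $\gcd(m,p^{s}-1)$ precisely when $d\mid m$ and $d\mid(p^{s}-1)$, this yields
\[
\gcd(m,p^{s}-1)=\sum_{\substack{d\mid m\\ d\mid p^{s}-1}}\varphi(d).
\]

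Next I would sum over all primes $p\le t$ and interchange the two summations. Since $m$ is fixed, the sum over the divisors $d\mid m$ is finite, so the interchange (and, later, the passage to the limit $t\to\infty$) is harmless. Writing $N_{d}(t)=\#\{p\le t:p^{s}\equiv 1\pmod{d}\}$, I obtain
\[
\frac{1}{\pi(t)}\sum_{p\le t}\gcd(m,p^{s}-1)=\sum_{d\mid m}\varphi(d)\cdot\frac{N_{d}(t)}{\pi(t)}.
\]

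The core of the argument is the evaluation of each density $N_{d}(t)/\pi(t)$. I would first observe that every solution of $x^{s}=1$ in $\mathbb{Z}/d\mathbb{Z}$ is automatically a unit (its inverse being $x^{s-1}$), so the $v_{s}(d)$ solutions all lie in $(\mathbb{Z}/d\mathbb{Z})^{*}$. A prime $p$ satisfies $p^{s}\equiv 1\pmod{d}$ exactly when its residue class modulo $d$ is one of these $v_{s}(d)$ unit classes. By Dirichlet's theorem on primes in arithmetic progressions, the primes are asymptotically equidistributed among the $\varphi(d)$ reduced residue classes modulo $d$, each class carrying relative density $1/\varphi(d)$ among the primes; the finitely many primes dividing $d$ are negligible in the limit. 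Hence
\[
\lim_{t\to\infty}\frac{N_{d}(t)}{\pi(t)}=\frac{v_{s}(d)}{\varphi(d)}.
\]

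Substituting this back, the factors $\varphi(d)$ cancel and I reach $I_{m}(s)=\sum_{d\mid m}v_{s}(d)$, which is the assertion. For the special case $s=1$ the equation $x=1$ has the single solution $x=1$ modulo every $d$, so $v_{1}(d)=1$ and the sum collapses to $\sum_{d\mid m}1=\tau(m)$. I expect the only substantial ingredient to be the prime equidistribution result (the prime number theorem for arithmetic progressions), which supplies the density $1/\varphi(d)$; the remaining steps are elementary manipulations of divisor sums.
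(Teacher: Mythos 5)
Your proof is correct. Note, though, that the paper itself does not prove this theorem at all: its ``proof'' is the citation ``See [9, Theorem 6.2]''. The closest the paper comes to exhibiting Nilsson's argument is its own Lemma 3.10 (the Dirichlet-density analogue), which follows Nilsson's strategy: partition the primes into the sets $A(l,m)=\{p:\gcd(m,p^{s}-1)=l\}$, relate them to the sets $B(l,m)=\{p:l\mid p^{s}-1\}$ by a disjoint union over multiples, recover $\delta(A(l,m))$ via M\"obius inversion (the paper's Lemma 3.9), and only then sum $l\,\delta(A(l,m))$ and simplify using $\varphi(d)=\sum_{k\mid d}\frac{d}{k}\mu(k)$. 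You instead linearize the gcd at the outset with Gauss's identity $\gcd(m,p^{s}-1)=\sum_{d\mid m,\,d\mid p^{s}-1}\varphi(d)$, which turns the whole sum into $\sum_{d\mid m}\varphi(d)\,N_{d}(t)/\pi(t)$ and makes the $\varphi(d)$ factors cancel against the equidistribution density $v_{s}(d)/\varphi(d)$; no M\"obius inversion is needed. Both routes rest on exactly the same analytic input --- that primes have natural density $1/\varphi(d)$ in each reduced residue class modulo $d$, i.e.\ the prime number theorem for arithmetic progressions (which you correctly flag as the substantial ingredient; Dirichlet's theorem alone gives only Dirichlet density, and the limit here is a natural density), together with the observation that every solution of $x^{s}=1$ in $\mathbb{Z}/d\mathbb{Z}$ is a unit. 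Your decomposition is shorter and avoids the inversion step, while Nilsson's partition into level sets of the gcd has the advantage that it produces the densities $\delta(A(l,m))$ themselves, which is what the paper actually needs later when it defines Dirichlet mean values via $\sum_{l\mid m}(l+1)\delta(A(l,m))$.
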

\begin{proof}
 See [9, Theorem 6.2].
\end{proof}

\begin{rem}
{\rm From Lemma 2.3, we know that (3.1) is the asymptotic mean value
of the number of solutions of $x^{m}=1$  in $\mathbb{F_{\it p^{s}}}$, when $p\to \infty$.}
\end{rem}

We have the following proposition on the asymptotic mean value of the number of
$r$-periodic points.

\begin{prop}
If r is a positive integer, then       
\begin{equation}
\begin{array}{lll}
 N(r,s)
     &=&\sum\limits_{d|r}\mu(d)(\sum\limits_{l|(n^{r/d}-1)}v_{s}(l)+1),\\
     &=&\left\{ \begin{array}{ll}
                 \sum\limits_{l|(n-1)}v_{s}(l)+1 & \textrm{if $r=1$}\\
                  \\
                 \sum\limits_{d|r}\sum\limits_{l|(n^{r/d}-1)}\mu(d)v_{s}(l) & \textrm{if $r>1$}
                 \end{array} \right.
\end{array}
\notag
\end{equation}
\text{\it where the sum is over all primes $p \le t$}.
\end{prop}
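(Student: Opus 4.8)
The plan is to compute $N(r,s)$ directly from the closed form for $\mathcal{P}(r,p^{s})$ supplied by Proposition 2.5, interchanging a \emph{finite} sum with the limit defining the asymptotic mean value and then invoking Nilsson's Theorem 3.2 term by term.

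First I would substitute $q=p^{s}$ into formula (2.3) to write
\[
\mathcal{P}(r,p^{s})=\sum_{d|r}\mu(d)\bigl(\gcd(n^{r/d}-1,p^{s}-1)+1\bigr).
\]
Inserting this into Definition 3.1, and noting that the outer sum $\sum_{d|r}$ ranges over the finitely many divisors of $r$, I would use linearity of limits to pull the limit inside, obtaining
\[
N(r,s)=\sum_{d|r}\mu(d)\,\lim_{t\to\infty}\frac{1}{\pi(t)}\sum_{p\le t}\bigl(\gcd(n^{r/d}-1,p^{s}-1)+1\bigr).
\]

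Next I would evaluate each inner limit separately. The constant summand contributes $\lim_{t\to\infty}\pi(t)^{-1}\sum_{p\le t}1=1$, while Theorem 3.2, applied with $m=n^{r/d}-1$, gives
\[
\lim_{t\to\infty}\frac{1}{\pi(t)}\sum_{p\le t}\gcd(n^{r/d}-1,p^{s}-1)=I_{\,n^{r/d}-1}(s)=\sum_{l\mid(n^{r/d}-1)}v_{s}(l).
\]
Combining these yields precisely the first claimed equality
\[
N(r,s)=\sum_{d|r}\mu(d)\Bigl(\sum_{l\mid(n^{r/d}-1)}v_{s}(l)+1\Bigr).
\]

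Finally, to reach the case split I would isolate the contribution of the $+1$ and apply the standard M\"obius identity $\sum_{d|r}\mu(d)=1$ for $r=1$ and $=0$ for $r>1$. When $r=1$ the divisor sum has the single term $d=1$, so the double sum reduces to $\sum_{l\mid(n-1)}v_{s}(l)$ and the M\"obius term adds $1$; when $r>1$ the M\"obius term vanishes, leaving only $\sum_{d|r}\sum_{l\mid(n^{r/d}-1)}\mu(d)v_{s}(l)$. I do not expect a substantive obstacle: the argument is essentially a bookkeeping exercise. The one point deserving care is the interchange of the limit with the sum over $d\mid r$, but this is legitimate because only finitely many convergent sequences are involved, each of whose limits exists and is \emph{finite} by Theorem 3.2 (so no indeterminate $\infty-\infty$ can arise among the $\mu(d)=-1$ terms).
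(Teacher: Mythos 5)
Your proposal is correct and follows essentially the same route as the paper's own proof: substitute (2.3) with $q=p^{s}$, apply Theorem 3.2 term by term across the finite sum over $d\mid r$, and use $\sum_{d|r}\mu(d)=0$ for $r>1$ to get the case split. The paper compresses this into three sentences; your version merely makes explicit the interchange of the limit with the finite divisor sum and the finiteness of each limit, which the paper takes for granted.
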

\begin{proof}
By (2.3) we have
   \begin{center}
     $\mathcal{P}(r,p^{s})=\sum\limits_{d|r} \mu(d) ({\rm gcd}(n^{r/d}-1,p^{s}-1)+1).$
   \end{center}
Notice that if $r>1$, $\sum\limits_{d|r}\mu(d)=0$. Then this
result follows directly from Theorem 3.2.
\end{proof}

\begin{rem}
  {\rm M.Nillson [9] proved that $I_{m}(s)$ is a periodic function in $s$, and he also gave a formula for
   its period.}
\end{rem}

\begin{exa}
{\rm
 If $n=2$, the asymptotic mean value of the number of fixed points is $2$ by Proposition 3.4. This can also be checked directly.
 Since $f$ has only two fixed points $0$ and $1$ in every $\mathbb{F_{\it p^{s}}}$, the asymptotic mean value of the number of
 fixed points is $2$.
}
\end{exa}

We can also get the following proposition about the asymptotic mean value of the number of $r$-cycles.

\begin{prop}
Let r be a positive integer, then                              
  \begin{equation}
\begin{array}{lll}
 \lim\limits_{t \to \infty}\frac{1}{\pi(t)}\sum\limits_{p \le t}{\mathcal{C}}(r,p^{s})
     &=&\frac{1}{r}\sum\limits_{d|r}\mu(d)(\sum\limits_{l|(n^{r/d}-1)}v_{s}(l)+1),\\
     &=&\left\{ \begin{array}{ll}
                 \sum\limits_{l|(n-1)}v_{s}(l)+1 & \textrm{if $r=1$}\\
                  \\
                 \frac{1}{r}\sum\limits_{d|r}\sum\limits_{l|(n^{r/d}-1)}\mu(d)v_{s}(l) & \textrm{if $r>1$}
                 \end{array} \right.
\end{array}
\notag
\end{equation}
\text{\it where the sum is over all primes $p \le t$}.
\end{prop}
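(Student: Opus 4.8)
The plan is to reduce the statement for $r$-cycles to the already-established result for $r$-periodic points. Recall from the relation $\mathcal{C}(r,q)=\mathcal{P}(r,q)/r$ that the number of $r$-cycles is obtained from the number of $r$-periodic points by dividing by the fixed constant $r$. Since $r$ does not depend on the prime $p$, I would pull this factor $1/r$ outside the averaging operator.

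Concretely, first I would write
\begin{equation}
\frac{1}{\pi(t)}\sum\limits_{p \le t}\mathcal{C}(r,p^{s})
=\frac{1}{\pi(t)}\sum\limits_{p \le t}\frac{\mathcal{P}(r,p^{s})}{r}
=\frac{1}{r}\cdot\frac{1}{\pi(t)}\sum\limits_{p \le t}\mathcal{P}(r,p^{s}).
\notag
\end{equation}
Next I would take the limit as $t\to\infty$. Because multiplication by the constant $1/r$ commutes with the limit, the right-hand side converges precisely when the average of $\mathcal{P}(r,p^{s})$ converges, and its limit is $\tfrac{1}{r}N(r,s)$, where $N(r,s)$ is the asymptotic mean value of the number of $r$-periodic points computed in Proposition 3.4. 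Substituting the closed form of $N(r,s)$ from that proposition yields exactly the claimed expression, including the case split between $r=1$ and $r>1$.

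For the case distinction, I would simply observe that when $r=1$ the prefactor $1/r$ equals $1$, so the $r$-cycle mean value coincides with the $r$-periodic-point mean value $\sum_{l|(n-1)}v_{s}(l)+1$; when $r>1$ the prefactor $1/r$ multiplies the double sum $\sum_{d|r}\sum_{l|(n^{r/d}-1)}\mu(d)v_{s}(l)$, matching the stated formula. No new analytic input is required beyond Proposition 3.4.

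There is essentially no hard step here: the only thing to verify is that dividing by the fixed integer $r$ interacts correctly with the limit, which is immediate since $r$ is independent of $p$ and $t$. The substantive work — evaluating the average of $\gcd(n^{r/d}-1,p^{s}-1)$ via Nilsson's Theorem 3.2 and simplifying using $\sum_{d|r}\mu(d)=0$ for $r>1$ — has already been carried out in the proof of Proposition 3.4, so I would cite that proposition directly rather than repeat the Möbius cancellation argument.
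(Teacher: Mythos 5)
Your proposal is correct and matches the paper's own proof, which likewise derives the result in one line from Proposition 3.4 via the identity $\mathcal{C}(r,p^{s})=\frac{1}{r}\mathcal{P}(r,p^{s})$ and the fact that the constant factor $\frac{1}{r}$ passes through the average and the limit. Your additional remarks on the $r=1$ versus $r>1$ case split are consistent with, and merely an elaboration of, the same argument.
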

\begin{proof}
It follows directly from Proposition 3.4, since
${\mathcal{C}}(r,p^{s})=\frac{1}{r}{\mathcal{P}}(r,p^{s}).$
\end{proof}

The asymptotic mean value of the number of periodic points is equal to
$\sum\limits_{r=1}^{\infty}N(r,s)$. If $r$ is a prime number, then
$N(r,s)=\sum\limits_{l|(n^{r}-1)}v_{s}(l)-\sum\limits_{l|(n-1)}v_{s}(l)$.
 Since $n^{r}-1=(n-1)(n^{r-1}+\cdots+n+1)$ and $n \ge 2,$ then $N(r,s) \ge 1.$ Thus $\sum\limits_{r=1}^{\infty}N(r,s)=\infty$.

The asymptotic mean value of the number of cycles equals to
$\sum\limits_{r=1}^{\infty}\frac{1}{r}N(r,s)$.
 If $r$ is a prime number, then $\frac{1}{r}N(r,s) \ge \frac{1}{r}$. Note that $\sum\limits_{p}\frac{1}{p}$, where the sum is over all
 prime numbers $p$, is equal to $\infty$. Hence the asymptotic mean value of the number of cycles is infinite.

From the discussions in the above two paragraphs we get the
following proposition.

\begin{prop}
 The asymptotic mean value of the number of periodic points and that of the number of cycles are both infinite.      
\end{prop}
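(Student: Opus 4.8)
The plan is to express each asymptotic mean value as an infinite series in $r$ and then bound it from below by retaining only the prime indices $r$, where $N(r,s)$ collapses to a transparent form. As recorded in the discussion above, the asymptotic mean value of the number of all periodic points is the series $\sum_{r=1}^{\infty} N(r,s)$ and that of all cycles is $\sum_{r=1}^{\infty}\frac{1}{r}N(r,s)$, so it suffices to show both series diverge. Every $N(r,s)$ is a limit of averages of the nonnegative quantities $\mathcal{P}(r,p^{s})$, hence $N(r,s)\ge 0$; I may therefore throw away all nonprime indices $r$ and estimate only the subseries indexed by primes, which can only decrease the sums.

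For a prime $r$ the M\"obius sum in Proposition 3.4 runs over $d\in\{1,r\}$ with $\mu(1)=1$ and $\mu(r)=-1$, giving
\[
N(r,s)=\sum_{l\mid(n^{r}-1)}v_{s}(l)-\sum_{l\mid(n-1)}v_{s}(l).
\]
The structural input here is the factorization $n^{r}-1=(n-1)(n^{r-1}+\cdots+n+1)$, so $(n-1)\mid(n^{r}-1)$ and every divisor of $n-1$ is also a divisor of $n^{r}-1$; thus the first sum contains all the terms of the second. Since $n\ge 2$ and $r\ge 2$ force $n^{r}-1>n-1$, the first sum has at least one extra term, for instance $l=n^{r}-1$, and for any modulus $l$ the residue $x\equiv 1$ solves $x^{s}=1$, so $v_{s}(l)\ge 1$. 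Hence $N(r,s)\ge 1$ for every prime $r$.

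Finally I would assemble the two conclusions from this single bound. For periodic points, $\sum_{r}N(r,s)\ge\sum_{r\ \mathrm{prime}}1=\infty$ by the infinitude of primes. For cycles, $\frac{1}{r}N(r,s)\ge\frac{1}{r}$ for each prime $r$, and since $\sum_{p}\frac{1}{p}$ over primes diverges (Euler/Mertens), the cycle series diverges as well. The only point I would verify rather than merely quote — and it is a matter of care, not a genuine difficulty — is the legitimacy of writing the mean value of \emph{all} periodic points (resp.\ cycles) as the term-by-term series $\sum_{r}N(r,s)$, i.e.\ the interchange of the limit over $p$ with the sum over $r$. This is harmless because for each fixed $p$ only finitely many $\mathcal{P}(r,p^{s})$ are nonzero by Proposition 2.9, so each inner sum is finite before the limit is taken.
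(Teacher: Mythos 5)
Your proposal is correct and follows essentially the same route as the paper: the paper likewise reduces to prime indices $r$, uses the formula $N(r,s)=\sum_{l\mid(n^{r}-1)}v_{s}(l)-\sum_{l\mid(n-1)}v_{s}(l)$ together with the factorization $n^{r}-1=(n-1)(n^{r-1}+\cdots+n+1)$ to get $N(r,s)\ge 1$, and then invokes the divergence of $\sum_{p}\frac{1}{p}$ for the cycle count. Your added remarks on nonnegativity of $N(r,s)$ and on the limit--sum interchange only make explicit points the paper leaves tacit.
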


In fact, W.-S. Chou and I.E. Shparlinski give a bound for the
asymptotic mean value of the number of periodic points in [1, Theorem 2].

We recall the definition of Dirichlet density for natural numbers.
Let $A$ be a set of primes in $\mathbb{N}$. If the limit
  \begin{center}
   $\lim\limits_{s \to 1^{ +}}\frac{\sum\limits_{p \in A}p^{-s}}{\sum\limits_{p\in \mathbb{N}}p^{-s}}$
  \end{center}
exists, then we call it the {\it Dirichlet density} of $A$, and
denote it by $\delta(A)$.

The following lemma is another form of M$\ddot{{\rm o}}$bius
inversion formula.
\begin{lem}
 Let $m$ be a positive integer and $g(r)=\sum\limits_{kr|m}h(kr)$. Then        
 \begin{center}
   $h(r)=\sum\limits_{kr|m}\mu(k)g(kr)$.
 \end{center}
\end{lem}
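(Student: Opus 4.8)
The plan is to substitute the defining relation for $g$ into the proposed inversion formula and simplify by interchanging the order of summation. Starting from the right-hand side $\sum_{kr\mid m}\mu(k)g(kr)$, I would first replace each $g(kr)$ by its definition, which reads $g(kr)=\sum_{jkr\mid m}h(jkr)$, so that the expression becomes the double sum $\sum_{kr\mid m}\mu(k)\sum_{jkr\mid m}h(jkr)$.

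Next I would reindex by collecting terms according to the product of the two summation variables. Setting $d=jk$, the summand $h(jkr)=h(dr)$ depends only on $d$, and the inner constraint $jkr\mid m$ becomes $dr\mid m$. For a fixed $d$ the variable $k$ then runs exactly over the divisors of $d$, with $j=d/k$. The crucial observation is that the outer condition $kr\mid m$ is automatically implied by $dr\mid m$ together with $k\mid d$, since $kr\mid dr\mid m$; conversely every pair $(d,k)$ with $dr\mid m$ and $k\mid d$ arises from an admissible pair $(k,j)$. Hence the double sum rearranges as $\sum_{dr\mid m}h(dr)\sum_{k\mid d}\mu(k)$.

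Finally I would invoke the fundamental property of the M\"obius function, namely $\sum_{k\mid d}\mu(k)=1$ when $d=1$ and $0$ otherwise. Only the term $d=1$ survives the inner sum, and it contributes $h(r)$, which is precisely the asserted identity. This completes the argument.

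The only genuinely delicate point is the bookkeeping during the interchange of the two sums: one must verify that the pairs $(k,j)$ satisfying $kr\mid m$ and $jkr\mid m$ correspond bijectively to the pairs $(d,k)$ satisfying $dr\mid m$ and $k\mid d$. I would write out this reindexing step most carefully, as everything else is a direct application of the standard M\"obius identity. No new ideas beyond the ordinary inversion formula are required; the lemma is simply its ``divisor-restricted'' analogue, where the divisibility constraint $kr\mid m$ plays the role that the full divisor lattice plays in the classical statement.
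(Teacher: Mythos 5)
Your proof is correct and complete. Note that the paper offers no argument of its own for this lemma---it simply cites [9, Lemma 6.1]---and your substitution-and-interchange argument is the standard proof of exactly this divisor-restricted M\"obius inversion; the key bookkeeping point you flag is handled correctly, since $jkr\mid m$ already forces $kr\mid m$, so the pairs $(k,j)$ with $jkr\mid m$ do correspond bijectively to the pairs $(d,k)$ with $dr\mid m$ and $k\mid d$. The only caveat worth recording is that the identity is meaningful for $r\mid m$ (otherwise the term $d=1$ is absent and the right-hand side is an empty sum while $h(r)$ is arbitrary), which is the implicit setting in which the paper applies the lemma (Lemmas 3.10 and 4.7).
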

\begin{proof}
 See [9, Lemma 6.1].
\end{proof}

We follow M.Nilsson's idea in [9, Theorem 6.2] to get the following
lemma.

\begin{lem}
 Let $m$ be a positive integer, $l$ be a positive divisor of $m$, and $A(l,m)=\{p\in \mathbb{N}:{\rm gcd}(m,p^{s}-1)=l\}$.
 Then
 \begin{center}
  $\sum\limits_{l|m}l\delta(A(l,m))=\sum\limits_{l|m}v_{s}(l)$.        
 \end{center}
\end{lem}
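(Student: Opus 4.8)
The plan is to introduce, for each divisor $l$ of $m$, the auxiliary set of primes $B_l = \{p \in \mathbb{N} : p^{s} \equiv 1 \pmod{l}\}$, whose Dirichlet density can be read off directly from Dirichlet's theorem on primes in arithmetic progressions, and then to transfer information between the $B_l$ and the sets $A(l,m)$ via the partition of the primes according to the value of $\gcd(m,p^{s}-1)$. The right-hand side $\sum_{l\mid m} v_{s}(l)$ is exactly the quantity $I_{m}(s)$ of Theorem 3.2, so the lemma is the Dirichlet-density analogue of that asymptotic mean value statement, and I would follow the same bookkeeping.

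First I would record that, since $l\mid m$, for every prime $p$ not dividing $m$ one has $l \mid \gcd(m,p^{s}-1)$ if and only if $p\in B_l$. Because $\gcd(m,p^{s}-1)$ is a divisor of $m$ and is uniquely determined for each such $p$, this yields the disjoint decomposition $B_l = \bigsqcup_{l\mid l',\, l'\mid m} A(l',m)$, valid up to the finitely many primes dividing $m$, which affect no Dirichlet density. Next I would compute $\delta(B_l)$: for a prime $p\nmid l$ the condition $p^{s}\equiv 1\pmod{l}$ says precisely that the residue $p \bmod l$ is one of the $v_{s}(l)$ solutions of $x^{s}=1$ in $(\mathbb{Z}/l\mathbb{Z})^{*}$ (every solution of $x^{s}=1$ in $\mathbb{Z}/l\mathbb{Z}$ is automatically a unit, so these are genuinely counted by $v_{s}(l)$). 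Each fixed invertible residue class modulo $l$ carries Dirichlet density $1/\varphi(l)$, whence $\delta(B_l)=v_{s}(l)/\varphi(l)$. At the same point I would observe that each $A(l,m)$ is itself a union of residue classes modulo $m$, since $\gcd(m,p^{s}-1)$ depends only on $p\bmod m$; hence $\delta(A(l,m))$ exists by the same theorem, which legitimizes all the manipulations that follow.

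Finally I would evaluate the single quantity $\sum_{l\mid m}\varphi(l)\,\delta(B_l)$ in two ways. Substituting $\delta(B_l)=v_{s}(l)/\varphi(l)$ gives $\sum_{l\mid m} v_{s}(l)$. On the other hand, expanding $\delta(B_l)=\sum_{l\mid l',\, l'\mid m}\delta(A(l',m))$ and interchanging the order of summation turns the expression into $\sum_{l'\mid m}\delta(A(l',m))\sum_{l\mid l'}\varphi(l)=\sum_{l'\mid m} l'\,\delta(A(l',m))$, using Gauss's identity $\sum_{l\mid l'}\varphi(l)=l'$. Equating the two evaluations yields the claim.

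The main obstacle is the density computation in the middle step: one must invoke Dirichlet's theorem both to guarantee that the relevant densities exist and to know that each unit residue class modulo $l$ contributes exactly $1/\varphi(l)$, as well as to handle the harmless exclusion of the primes dividing $m$. Once this analytic input is secured, the remaining ingredients — the partition $B_l=\bigsqcup A(l',m)$, finite additivity of Dirichlet density over disjoint unions, and Gauss's identity — are routine.
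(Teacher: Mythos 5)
Your proof is correct, and it shares the paper's skeleton: the same auxiliary sets $B_l=\{p:\ l\mid p^{s}-1\}$ (the paper's $B(l,m)$), the same disjoint decomposition $B_l=\bigcup_{l\mid l',\ l'\mid m}A(l',m)$, and the same Dirichlet-theorem computation $\delta(B_l)=v_{s}(l)/\varphi(l)$. Where you genuinely diverge is the endgame. The paper applies M\"obius inversion (its Lemma 3.9) to the density relation to extract each density individually, $\delta(A(l,m))=\sum_{kl\mid m}\mu(k)\,v_{s}(kl)/\varphi(kl)$, and then re-sums, collapsing the resulting double sum via the identity $\varphi(d)=\sum_{k\mid d}\mu(k)\,d/k$. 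You never invert: you evaluate the single quantity $\sum_{l\mid m}\varphi(l)\,\delta(B_l)$ in two ways and close with Gauss's identity $\sum_{l\mid l'}\varphi(l)=l'$. The two finishes are dual to one another (M\"obius inversion versus Gauss's identity), but yours is shorter, dispenses with Lemma 3.9 entirely, and has the additional merit of making explicit why the densities $\delta(A(l,m))$ exist at all --- each $A(l,m)$ is, up to finitely many primes, a union of residue classes modulo $m$ --- a point both arguments need (finite additivity over the disjoint union is meaningless otherwise) but which the paper leaves implicit. One tiny remark: your exclusion of the primes dividing $m$ from the decomposition of $B_l$ is unnecessary (though harmless), since for $l\mid m$ the equivalence $l\mid\gcd(m,p^{s}-1)\iff l\mid p^{s}-1$ holds for every prime $p$ without exception.
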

\begin{proof}
Let $B(l,m)=\{p\in \mathbb{N}:l|(p^{s}-1)\}$. Obviously,
$B(l,m)=\bigcup\limits_{kl|m}A(kl,m)$. In fact this is a disjoint
union. Thus $\delta(B(l,m))=\sum\limits_{kl|m}\delta(A(kl,m))$. By
Lemma 3.9, we have
$\delta(A(l,m))=\sum\limits_{kl|m}\mu(k)\delta(B(kl,m))$. On the
other hand, $B(l,m)=\bigcup\limits_{i^{s}-1\equiv 0({\rm
mod}~l)}C(i,l)$, where $i\le l$ and $C(i,l)=\{p\in \mathbb{N}:p\equiv i({\rm
mod}~l)\}$. If $i^{s}-1\equiv 0({\rm mod}~l)$, then gcd$(i,l)=1$. By
Dirichlet's theorem for arithmetic progressions,
 we have $\delta(C(i,l))=\frac{1}{\varphi(l)}$, where $\varphi$ is Euler's $\varphi$-function.
Hence $\delta(B(l,m))=\frac{1}{\varphi(l)}v_{s}(l)$. So
$\delta(A(l,m))=\sum\limits_{kl|m}\mu(k)\frac{1}{\varphi(kl)}v_{s}(kl)$.
Therefore
 \begin{equation}
  \begin{array}{lll}
   \sum\limits_{l|m}l\delta(A(l,m))&=&\sum\limits_{l|m}l\sum\limits_{kl|m}\mu(k)\frac{1}{\varphi(kl)}v_{s}(kl)\\
   &=&\sum\limits_{d|m}\sum\limits_{k|d}\frac{d}{k}\mu(k)\frac{v_{s}(d)}{\varphi(d)}\\
   &=&\sum\limits_{d|m}\frac{v_{s}(d)}{\varphi(d)}\sum\limits_{k|d}\frac{d}{k}\mu(k)\\
   &=&\sum\limits_{d|m}v_{s}(d),
  \end{array}
  \notag
 \end{equation}
 since $\varphi(d)=\sum\limits_{k|d}\frac{d}{k}\mu(k)$.
\end{proof}

\begin{rem}
{\rm Similar to Remark 3.3, we call
$\sum\limits_{l|m}l\delta(A(l,m))$ the {\it Dirichlet mean value}
of the number of solutions of $x^{m}=1$ in $\mathbb{F_{\it p^{s}}}$
with respect to $p$.}
\end{rem}

Formally we compute the sum of the number of $r$-periodic points of $f$ over all prime numbers as follows,
\begin{equation}
 \begin{array}{lll}
  \sum\limits_{p}\mathcal{P}(r,p^{s})&=&\sum\limits_{p}\sum\limits_{d|r} \mu(d) ({\rm gcd}(n^{r/d}-1,p^{s}-1)+1)\\
  &=&\sum\limits_{d|r}\mu(d)\sum\limits_{p}({\rm gcd}(n^{r/d}-1,p^{s}-1)+1).
 \end{array}
 \notag
\end{equation}
Then we define the {\it Dirichlet mean value} of the number of $r$-periodic points of $f$ in
 $\mathbb{F_{\it p^{s}}}$ with respect to $p$ by
 \begin{center}
  $\sum\limits_{d|r}\mu(d)\sum\limits_{l|(n^{r/d}-1)}(l+1)\delta(A(l,n^{r/d}-1))$,
 \end{center}
denoted by $D(r,s)$.

\begin{prop}
 Let r be a positive integer, then                 
 \begin{center}
  $D(r,s)=\left\{ \begin{array}{ll}
                 \sum\limits_{l|(n-1)}v_{s}(l)+1 & \textrm{if $r=1$}\\
                  \\
                 \sum\limits_{d|r}\sum\limits_{l|(n^{r/d}-1)}\mu(d)v_{s}(l) & \textrm{if $r>1$}
                 \end{array} \right.$
 \end{center}
\end{prop}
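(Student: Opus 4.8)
The plan is to evaluate $D(r,s)$ directly from its defining expression
$$D(r,s)=\sum_{d|r}\mu(d)\sum_{l|(n^{r/d}-1)}(l+1)\delta(A(l,n^{r/d}-1)),$$
by treating, for each fixed $d\mid r$, the inner sum over $l$ attached to the modulus $m=n^{r/d}-1$. First I would split the weight $(l+1)$ additively, so that
$$\sum_{l|m}(l+1)\delta(A(l,m))=\sum_{l|m}l\,\delta(A(l,m))+\sum_{l|m}\delta(A(l,m)).$$
The first of these is precisely the quantity computed in Lemma 3.10, hence it equals $\sum_{l|m}v_{s}(l)$.

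For the second sum I would observe that, for a fixed $m$, every prime $p$ satisfies $\gcd(m,p^{s}-1)=l$ for exactly one divisor $l$ of $m$; hence the sets $A(l,m)$, as $l$ ranges over the divisors of $m$, form a disjoint partition of the set of all primes. Since each density $\delta(A(l,m))$ exists (as established inside the proof of Lemma 3.10) and the total set of primes has Dirichlet density $1$, finite additivity of density over this partition gives $\sum_{l|m}\delta(A(l,m))=1$. Combining the two contributions yields, for each $d\mid r$,
$$\sum_{l|(n^{r/d}-1)}(l+1)\delta(A(l,n^{r/d}-1))=\sum_{l|(n^{r/d}-1)}v_{s}(l)+1.$$

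Substituting this back, I obtain
$$D(r,s)=\sum_{d|r}\mu(d)\left(\sum_{l|(n^{r/d}-1)}v_{s}(l)+1\right)=\sum_{d|r}\mu(d)\sum_{l|(n^{r/d}-1)}v_{s}(l)+\sum_{d|r}\mu(d).$$
The closing step is to invoke the standard M\"obius identity that $\sum_{d|r}\mu(d)$ equals $1$ when $r=1$ and $0$ when $r>1$. This cleanly separates the two cases of the statement, leaving $\sum_{l|(n-1)}v_{s}(l)+1$ in the case $r=1$ and $\sum_{d|r}\sum_{l|(n^{r/d}-1)}\mu(d)v_{s}(l)$ in the case $r>1$.

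The argument is short precisely because Lemma 3.10 already carries the genuine arithmetic content (Dirichlet's theorem on primes in arithmetic progressions, together with the M\"obius inversion of Lemma 3.9). The one point I would state explicitly rather than take for granted — and the only place a subtlety could hide — is that the constant contribution from the ``$+1$'' collapses to $\sum_{l|m}\delta(A(l,m))=1$ through the partition property; this is exactly the mechanism that produces the extra $+1$ in the $r=1$ branch and makes it vanish when $r>1$.
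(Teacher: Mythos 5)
Your proof is correct and follows essentially the same route as the paper's: split the weight $(l+1)$ into $l+1$, apply Lemma 3.10 to the $l$-weighted sum, and collapse $\sum_{d|r}\mu(d)$ to separate the cases $r=1$ and $r>1$. The only difference is that you explicitly justify $\sum_{l|m}\delta(A(l,m))=1$ via the partition of the primes, a step the paper's proof absorbs silently into its second displayed line; this is a point in your favor, not a divergence.
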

\begin{proof}
 By the definition of $D(r,s)$ and Lemma 3.10, we have
 \begin{equation}
  \begin{array}{lll}
   D(r,s)&=&\sum\limits_{d|r}\mu(d)\sum\limits_{l|(n^{r/d}-1)}(l+1)\delta(A(l,n^{r/d}-1))\\
   &=&\sum\limits_{d|r}\mu(d)(\sum\limits_{l|(n^{r/d}-1)}l\delta(A(l,n^{r/d}-1))+1)\\
   &=&\sum\limits_{d|r}\mu(d)(\sum\limits_{l|(n^{r/d}-1)}v_{s}(l)+1).
   \end{array}
    \notag
 \end{equation}
\end{proof}

\begin{rem}
{\rm Comparing with Proposition 3.4, we see that the asymptotic mean value and Dirichlet mean value of the number of $r$-periodic points are the same. Hence the Dirichlet mean values of the number of $r$-cycles, of the number of periodic points and of the number of cycles are the
same as those for asymptotic mean values respectively. }
\end{rem}

\section{Mean values associated with function fields}           

In the previous section, we compute mean values with respect to $p$. To
the contrary, in this section we will fix a prime number $p$ to
compute mean values with respect to the powers of $p$.

Let $\mathbb{F}_{q}$ be a finite field  with $q$ elements. Let
$\mathbb{A}=\mathbb{F}_{q}[T]$ be the polynomial ring over
$\mathbb{F}_{q}$. Let $K$ be a function field with constant
field $\mathbb{F}_{q}$, and $\mathcal{O}_{K}$ be the ring of
 integers of $K$ over $\mathbb{A}$. It is well known that
 for every prime ideal $P$ in $\mathcal{O}_{K}$, the
residue class field $\mathcal{O}_{K}/P$ is a finite field, with a
power of $q$ elements, and we call the degree of this extension
$[\mathcal{O}_{K}/P : \mathbb{F}_{q}]$ the $degree$ of $P$,
denoted by deg$P$. We use $|P|$ to denote the number of elements
of $\mathcal{O}_{K}/P $, i.e. $|\mathcal{O}_{K}/P|$.

In this section, we will view a finite field as $\mathcal{O}_{K}$
modulo some $P$.

Let $L/K$ be a finite extension of function fields. It is well known
that if a prime $P$ of $K$ is unramified over $L$, for every prime
$\mathfrak{P}$ of $L$ over $P$, we can associate an automorphism
$(\mathfrak{P},L/K)$ in $G={\rm Gal}(L/K)$ called the {\it Frobenius
automorphism}
 of $\mathfrak{P}$. Moreover, the set of automorphisms
  $(P, L/K) \triangleq \{(\mathfrak{P},L/K)|\mathfrak{P}$ over $P\}$ is a conjugacy class in $G$.

First we recall two results in function fields. One is about the
prime ideals decomposition in constant field extensions. The other
is about the Chebotarev density theorem for the natural density of
primes.

\begin{lem}                  
  Suppose the constant field of a function field K is $\mathbb{F}_{q}$, $K_{n}=\mathbb{F_{\it q^{n}}}K$, $\mathcal{O}$
   and $\mathcal{O}_{n}$ are the rings of integers of K and $K_{n}$ respectively. Then for every prime $P$ of $\mathcal{O}$ with degree
   $d$, we have
   \begin{center}
     $P\mathcal{O}_{n}=\mathfrak{P}_{1}\mathfrak{P}_{2} \cdots \mathfrak{P}_{g}$
   \end{center}
   where each $\mathfrak{P}_{i}$ is a prime of $\mathcal{O}_{n}$, $g={\rm gcd}(n,d)$ and the degree of residue class $f(\mathfrak{\mathfrak{P}}_{i}/P)$,
   i.e. $[\mathcal{O}_{n}/\mathfrak{P}_{i} :\mathcal{O}/P]$, $=\frac{n}{{\rm gcd}(n,d)}$.
\end{lem}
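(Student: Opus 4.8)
The plan is to treat $K_{n}/K$ as a constant field extension and to read off the splitting of $P$ from the behaviour of finite fields. Under the standing hypothesis that $\mathbb{F}_{q}$ is the full constant field of $K$ (i.e.\ $\mathbb{F}_{q}$ is algebraically closed in $K$), the compositum $K_{n}=\mathbb{F}_{q^{n}}K$ is a Galois extension of $K$ of degree $n$, and restriction to constants gives a canonical isomorphism $\mathrm{Gal}(K_{n}/K)\cong\mathrm{Gal}(\mathbb{F}_{q^{n}}/\mathbb{F}_{q})$, which is cyclic of order $n$. The decisive structural input is that constant field extensions are unramified at every prime; thus in $P\mathcal{O}_{n}=\prod_{i}\mathfrak{P}_{i}^{e_{i}}$ every $e_{i}=1$. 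Because $K_{n}/K$ is Galois, all the $\mathfrak{P}_{i}$ share a common residue degree $f$, and the fundamental identity $efg=[K_{n}:K]=n$ collapses to $fg=n$. This already yields $P\mathcal{O}_{n}=\mathfrak{P}_{1}\cdots\mathfrak{P}_{g}$ as a product of distinct primes, so the whole lemma reduces to computing $f$ (equivalently $g$).

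First I would identify the residue fields. By the definition of $\deg P=d$ the residue field $\mathcal{O}/P$ is $\mathbb{F}_{q^{d}}$, and I claim that for each $i$ the residue field $\mathcal{O}_{n}/\mathfrak{P}_{i}$ equals the compositum $\mathbb{F}_{q^{d}}\cdot\mathbb{F}_{q^{n}}=\mathbb{F}_{q^{\mathrm{lcm}(n,d)}}$. The inclusion $\supseteq$ holds because the reduction map is injective on the constant field $\mathbb{F}_{q^{n}}$ of $K_{n}$ (a nonzero constant is a unit, hence reduces to a nonzero element), so $\mathcal{O}_{n}/\mathfrak{P}_{i}$ contains both $\mathbb{F}_{q^{d}}$ and $\mathbb{F}_{q^{n}}$. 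The inclusion $\subseteq$ holds because $K_{n}=K(\zeta)$ for a primitive element $\zeta$ of $\mathbb{F}_{q^{n}}/\mathbb{F}_{q}$, so the residue field is generated over $\mathbb{F}_{q^{d}}$ by the image of $\zeta$, which already lies in $\mathbb{F}_{q^{n}}$. Consequently $f=[\mathbb{F}_{q^{\mathrm{lcm}(n,d)}}:\mathbb{F}_{q^{d}}]=\mathrm{lcm}(n,d)/d=n/\gcd(n,d)$, and then $g=n/f=\gcd(n,d)$, which is exactly the assertion.

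As an equivalent and perhaps more self-contained route, I could bypass the abstract theory and factor the minimal polynomial directly: write $\mathbb{F}_{q^{n}}=\mathbb{F}_{q}[x]/(m(x))$ with $m$ irreducible of degree $n$ over $\mathbb{F}_{q}$; since the extension is unramified, the primes of $\mathcal{O}_{n}$ over $P$ correspond to the irreducible factors of the reduction $\bar m\in\mathbb{F}_{q^{d}}[x]$. Each root of $m$ generates $\mathbb{F}_{q^{n}}$, so its degree over $\mathbb{F}_{q^{d}}$ is the least $k$ with $\mathbb{F}_{q^{n}}\subseteq\mathbb{F}_{q^{dk}}$, namely $k=n/\gcd(n,d)$; hence every irreducible factor of $\bar m$ has this common degree, there are $\gcd(n,d)$ of them, and this reproduces $f$ and $g$ simultaneously.

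The main obstacle is not the arithmetic --- the $\gcd$/$\mathrm{lcm}$ bookkeeping is routine --- but securing the two structural facts about constant field extensions: that $K_{n}/K$ is unramified everywhere (so $e=1$) and that it has degree exactly $n$ with the Galois group coming from the constants. The latter is precisely where the hypothesis that $\mathbb{F}_{q}$ is the \emph{exact} constant field of $K$ is indispensable; if some $\mathbb{F}_{q^{m}}\subsetneq\mathbb{F}_{q^{n}}$ already sat inside $K$, the degree $[K_{n}:K]$ would drop and all of the counts above would be distorted. One small point to handle carefully is the injectivity of reduction on constants, which is what places $\mathbb{F}_{q^{n}}$ inside each residue field.
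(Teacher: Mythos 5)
Your argument is correct, but note that the paper itself does not prove this lemma: its proof consists of the single citation [10, Propositions 8.5 and 8.13] (Rosen, \emph{Number Theory in Function Fields}), which are precisely the facts you establish. So your proposal supplies the content behind the citation, and it follows the same standard line as the cited source: $K_{n}/K$ is Galois of degree $n$ with group $\mathrm{Gal}(\mathbb{F}_{q^{n}}/\mathbb{F}_{q})$ (this is exactly where the hypothesis that $\mathbb{F}_{q}$ is the full constant field enters), every prime is unramified, and the residue field of $\mathfrak{P}_{i}$ is the compositum $\mathbb{F}_{q^{d}}\cdot\mathbb{F}_{q^{n}}=\mathbb{F}_{q^{\mathrm{lcm}(n,d)}}$, giving $f=n/\gcd(n,d)$ and $g=n/f=\gcd(n,d)$. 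One step in your write-up deserves more care: both the inclusion $\mathcal{O}_{n}/\mathfrak{P}_{i}\subseteq\mathbb{F}_{q^{\mathrm{lcm}(n,d)}}$ (``the residue field is generated over $\mathbb{F}_{q^{d}}$ by the image of $\zeta$'') and the Kummer--Dedekind correspondence invoked in your second route tacitly require $\mathcal{O}_{n}=\mathcal{O}[\zeta]$; unramifiedness alone is not the correct justification for the latter. This is easily repaired: since $\mathbb{F}_{q}$ is algebraically closed in $K$, the minimal polynomial $m$ of $\zeta$ over $K$ coincides with its minimal polynomial over $\mathbb{F}_{q}$, so $\mathrm{disc}(m)\in\mathbb{F}_{q}^{*}$ is a unit of $\mathcal{O}$, and the standard inclusion $\mathrm{disc}(m)\,\mathcal{O}_{n}\subseteq\mathcal{O}[\zeta]$ forces $\mathcal{O}_{n}=\mathcal{O}[\zeta]$. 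With this in hand, your second route is actually the most economical and fully self-contained: because $m$ has coefficients in $\mathbb{F}_{q}$, its reduction mod $P$ is $m$ itself viewed over the residue field $\mathbb{F}_{q^{d}}$, a separable polynomial all of whose irreducible factors have degree $n/\gcd(n,d)$ (each root generates $\mathbb{F}_{q^{n}}$, and $\mathbb{F}_{q^{d}}\cdot\mathbb{F}_{q^{n}}=\mathbb{F}_{q^{\mathrm{lcm}(n,d)}}$); this yields unramifiedness, $f$, and $g$ in one stroke, with no appeal to the unproved fact that constant field extensions are everywhere unramified. Alternatively, your first route can bypass $\mathcal{O}_{n}=\mathcal{O}[\zeta]$ entirely by computing the order of the Frobenius $(\mathfrak{P}_{i},K_{n}/K)$, which acts on constants as $x\mapsto x^{q^{d}}$ and therefore has order $n/\gcd(n,d)$ in $\mathrm{Gal}(\mathbb{F}_{q^{n}}/\mathbb{F}_{q})\cong\mathrm{Gal}(K_{n}/K)$, giving $f$ directly.
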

\begin{proof}
See [10, Proposition 8.5 and Proposition 8.13].
\end{proof}

\begin{thm}
{\rm (Chebotarev)}                       

Let L/K be a Galois extension of function fields. Denote {\rm Gal($L/K$)} by
G. Let F and E be the constant fields of K and L respectively, and F
with q elements. Let $[E:F]=l, [L:KE]=m, [K:F(T)]=d$, $g_{K}$ and
$g_{L}$ be the genus of K and L respectively . Let $\mathscr{C}$ be
a conjugacy class in G, $\mathcal{S}_{k}(L/K,\mathscr{C})=\{$ P
unramified prime  of K $: (P,L/K)=\mathscr{C},$ and {\rm
deg}$P=k\}$, and
$\mathcal{C}_{k}(L/K,\mathscr{C})=|\mathcal{S}_{k}(L/K,\mathscr{C})|$.
For every $\rho \in \mathscr{C}$, $\rho|_{E}$ is the same power of
the Frobenius $\sigma$ of E/F, assume $\rho|_{E}=\sigma^{a}, \forall
\rho \in \mathscr{C}$. Then we have

{\rm (1)} $\mathcal{S}_{k}(L/K,\mathscr{C})$ is empty except when $k
\equiv a (${\rm mod }$l)$.

{\rm (2)} If $k \equiv a (${\rm mod }$l)$, then
 \begin{center}
 $|\mathcal{C}_{k}(L/K,\mathscr{C})-\frac{|\mathscr{C}|}{km}q^{k}| < 4|\mathscr{C}|(d^{2}+
\frac{1}{2}g_{L}d+\frac{1}{2}g_{L}+g_{K}+1)q^{k/2}$.
 \end{center}
\end{thm}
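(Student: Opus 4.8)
The plan is to prove part (1) directly from the action of Frobenius on the constant field, and part (2) by expanding the indicator function of $\mathscr{C}$ into the irreducible characters of $G$, feeding each character into an Artin $L$-function, and controlling the latter via the Riemann Hypothesis for curves over finite fields. For part (1), let $P$ be an unramified prime of $K$ of degree $k$, so $|P|=q^{k}$, and fix a prime $\mathfrak{P}$ of $L$ above $P$. By definition the Frobenius $(\mathfrak{P},L/K)$ acts on the residue field $\mathcal{O}_{L}/\mathfrak{P}$ as $x\mapsto x^{q^{k}}$. Restricting this action to the constant field $E\subseteq L$, which embeds into $\mathcal{O}_{L}/\mathfrak{P}$, shows $(\mathfrak{P},L/K)|_{E}=\sigma^{k}$, where $\sigma\colon x\mapsto x^{q}$ generates $\mathrm{Gal}(E/F)$, a cyclic group of order $l$. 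Hence every element of the class $\mathscr{C}=(P,L/K)$ restricts to $\sigma^{k}$ on $E$; comparing with the hypothesis $\rho|_{E}=\sigma^{a}$ forces $\sigma^{k}=\sigma^{a}$, i.e. $k\equiv a\pmod{l}$. Thus if $k\not\equiv a\pmod{l}$ no such prime exists and $\mathcal{S}_{k}(L/K,\mathscr{C})$ is empty.

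For part (2), expand the indicator of $\mathscr{C}$ by orthogonality as $\mathbf{1}_{\mathscr{C}}=\frac{|\mathscr{C}|}{|G|}\sum_{\chi}\overline{\chi(g_{0})}\,\chi$, with $g_{0}\in\mathscr{C}$ and $\chi$ ranging over the irreducible characters of $G$. Since each $\chi$ is a class function, evaluating on the Frobenius classes of degree-$k$ primes gives
\[
\mathcal{C}_{k}(L/K,\mathscr{C})=\frac{|\mathscr{C}|}{|G|}\sum_{\chi}\overline{\chi(g_{0})}\sum_{\deg P=k}\chi\big((P,L/K)\big),
\]
and each inner sum is read off from the coefficients of $\log L(u,\chi)$, the Artin $L$-function written in $u=q^{-s}$. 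The decisive split is whether $\chi$ is inflated from the constant-field quotient $G/\mathrm{Gal}(L/KE)\cong\mathrm{Gal}(E/F)=\langle\sigma\rangle$. For each of the $l$ such one-dimensional characters $\psi$ one has the identity $L(u,\psi)=\zeta_{K}(\psi(\sigma)u)$, so $L(u,\psi)$ retains the pole of $\zeta_{K}$ and its relevant coefficient is $\psi(\sigma)^{k}\cdot\#\{\deg P=k\}=\psi(\sigma)^{k}(q^{k}/k+O(g_{K}q^{k/2}/k))$; since $\sum_{\psi}\overline{\psi(\sigma)^{a}}\psi(\sigma)^{k}$ equals $l$ exactly when $k\equiv a\pmod l$ and $0$ otherwise, these $l$ characters collectively produce the main term $\tfrac{|\mathscr{C}|}{km}q^{k}$. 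Every remaining $\chi$ is nontrivial on $\mathrm{Gal}(L/KE)$, hence $L(u,\chi)$ is a polynomial with no pole and contributes only to the error.

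The analytic heart is then the rationality of $L(u,\chi)$ (Weil--Grothendieck) together with the Riemann Hypothesis for curves: for $\chi$ not inflated from the constant field one may write $L(u,\chi)=\prod_{i}(1-\alpha_{i}u)$ with $|\alpha_{i}|=q^{1/2}$, so the $u^{k}$-coefficient of its logarithmic derivative is bounded by $\deg L(u,\chi)\cdot q^{k/2}$. The degrees are governed by the conductor--discriminant formula, through which $\sum_{\chi}\chi(1)\deg L(u,\chi)$ is expressed via $\deg\mathfrak{d}_{L/K}$, which by Riemann--Hurwitz equals $(2g_{L}-2)-[L:K](2g_{K}-2)$; the individual degrees are thereby controlled by $g_{L}$, $g_{K}$ and $[L:K]=lm$. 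Multiplying by $\overline{\chi(g_{0})}$ (with $|\chi(g_{0})|\le\chi(1)$), summing over $\chi$ using $\sum_{\chi}\chi(1)^{2}=|G|$, and adding the $O(g_{K}q^{k/2})$ error in the main term together with the $O(q^{k/2})$ corrections from ramified primes and from lower prime powers ($\deg P\mid k$, $\deg P<k$), yields an error of order $q^{k/2}$.

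I expect the main obstacle to be the purely bookkeeping task of packaging all these pieces into the single clean constant $4|\mathscr{C}|\big(d^{2}+\tfrac{1}{2}g_{L}d+\tfrac{1}{2}g_{L}+g_{K}+1\big)$. This requires tracking the exact polynomial degrees of the $L(u,\chi)$ in terms of $g_{L}$, $g_{K}$ and $d=[K:F(T)]$, bounding the number and contribution of ramified places and of higher prime powers, and converting the weighted coefficients of the logarithmic derivatives into an unweighted count of primes of exact degree $k$, all while keeping the dependence on $|\mathscr{C}|$ and $d$ uniform. The appearance of $d$, and in particular of the $d^{2}$ term, reflects that the degrees of the relevant conductors and the number of small places of $K$ are bounded through the structure of $K$ as a degree-$d$ extension of the rational function field $F(T)$.
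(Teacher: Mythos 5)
The paper does not actually prove this theorem: its ``proof'' is the single line ``See [4, Proposition 5.16]'', i.e.\ it quotes Fried--Jarden, so your attempt has to stand on its own and be compared with that source rather than with anything in the paper. Your part (1) is complete and correct, and is the argument anyone gives: the Frobenius of an unramified degree-$k$ prime acts on the residue field as $x\mapsto x^{q^{k}}$, hence restricts to $\sigma^{k}$ on $E$, forcing $k\equiv a\pmod{l}$. For part (2) your route --- column orthogonality for $\mathbf{1}_{\mathscr{C}}$, the dichotomy between the $l$ characters inflated from $G/\mathrm{Gal}(L/KE)\cong\mathrm{Gal}(E/F)$ and those nontrivial on $\mathrm{Gal}(L/KE)$, the identity $L(u,\psi)=\zeta_{K}(\psi(\sigma)u)$ producing the main term $\frac{|\mathscr{C}|}{km}q^{k}$ via $|G|=lm$, and Weil's Riemann Hypothesis plus conductor--discriminant to control the remaining characters --- is the standard analytic proof (essentially the one in Rosen's book) and is sound in outline. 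It is genuinely different from the proof in the cited source: Fried and Jarden argue geometrically, counting points fixed by a twisted Frobenius on a plane model of the relevant curve, and the $d^{2}$ and $\frac{1}{2}g_{L}d$ in the stated constant are precisely the cost of that plane model (genus bound $\frac{1}{2}(d-1)(d-2)$ plus corrections at singular points), not quantities that fall out of $L$-function bookkeeping.

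That is also where the genuine gap lies. The theorem to be proved is an inequality with the explicit constant $4|\mathscr{C}|\bigl(d^{2}+\frac{1}{2}g_{L}d+\frac{1}{2}g_{L}+g_{K}+1\bigr)$, and your proposal concedes exactly this to ``bookkeeping''. But along your route the natural bound has a different shape: summing $\chi(1)\deg L(u,\chi)$ via conductor--discriminant and Riemann--Hurwitz gives an error governed by $g_{L}$, $g_{K}$ and $|G|=lm$ with a prefactor $\frac{|\mathscr{C}|}{|G|}$, and the prime-power and ramified-prime corrections bring in $d$ only linearly (by counting low-degree places of $K$ through the degree-$d$ map to the line), not quadratically. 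So to finish you would need an additional comparison showing your constant is dominated by the stated one --- a real verification, not routine packaging; without it you have proved a theorem of the same shape but not this theorem. One concrete slip to repair in that bookkeeping: since $L/K$ contains the constant field extension $E/F$, Riemann--Hurwitz must be applied to the geometric part $L/KE$, giving $2g_{L}-2=m(2g_{K}-2)+\deg\mathfrak{d}$ with $m=[L:KE]$ and degrees measured over $E$, not $2g_{L}-2=[L:K](2g_{K}-2)+\deg\mathfrak{d}_{L/K}$ as you wrote; since $[L:K]=lm$, this matters whenever $l>1$.
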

\begin{proof}
  See [4, Proposition 5.16].
\end{proof}

In what follows, we fix a prime number $p$ and we also fix a
function field $K$ with constant field $\mathbb{F}_{q}$, where
$q$ is a power of $p$. Let $P_{K}$ be the set of all primes of $K$.
If $t$ is a positive integer, then we denote the set of all primes
$P$ of $K$ with deg$P \le t$ by $P_{K}(t)$, and we also denote
$|P_{K}(t)|$ by $\pi_{K}(t)$.

We recall the definition of density. Let $A$ be a subset of $P_{K}$.
Let $A(t)=\{P\in A: {\rm deg}P \le t\}$. If the limit
$\lim\limits_{t \to \infty}\frac{|A(t)|}{\pi_{K}(t)}$ exists, then
we call it the {\it natural density} of $A$. If the limit
  \begin{center}
   $\lim\limits_{s \to 1^{ +}}\frac{\sum\limits_{P \in A}|P|^{-s}}{\sum\limits_{P \in P_{K}}|P|^{-s}}$
  \end{center}
exists, then we call it the {\it Dirichlet density} of $A$, and
denote it by $\delta(A)$.

We follow Wu's idea ([12, Lemma 4.2]) to get the following lemma.

\begin{lem}
Let $r$ be a positive integer. Let $S(r,P_{K})$ denote the set of all primes in                     
$P_{K}$ such that $r|(|P|-1)$.

{\rm (1)} If ${\rm gcd}(r,q)=1$, then
 \begin{center}
   $\delta(S(r,P_{K}))=\frac{1}{l_{r}}$,
 \end{center}
where $l_{r}$ is the minimal positive integer $l$ such that $q^{l}
\equiv 1 (${\rm mod} $r).$

{\rm (2)} $S(r,P_{K})$ is not empty if and only if ${\rm
gcd}(r,q)=1$.
\end{lem}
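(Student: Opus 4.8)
The plan is to convert the arithmetic condition $r\mid(|P|-1)$ into a congruence on $\deg P$, to identify the resulting set of primes as those splitting completely in a constant field extension, and then to evaluate its Dirichlet density with the Chebotarev estimate of Theorem 4.2. I use throughout that $|P|=q^{\deg P}$, so $r\mid(|P|-1)$ is the same as $q^{\deg P}\equiv 1\pmod r$. I would first settle the easy half of (2): if $\gcd(r,q)>1$ then $p\mid r$ (as $q$ is a power of $p$), while $q^{\deg P}\equiv 0\pmod p$ gives $q^{\deg P}-1\equiv -1\pmod p$, so no prime satisfies $r\mid(|P|-1)$ and $S(r,P_K)=\emptyset$. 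If instead $\gcd(r,q)=1$, then $q$ is a unit of order exactly $l_r$ modulo $r$, so $q^{\deg P}\equiv 1\pmod r$ holds iff $l_r\mid\deg P$. Thus $S(r,P_K)$ is exactly the set of primes of $K$ whose degree is divisible by $l_r$.

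For (1) I would pass to the constant field extension $L=K_{l_r}=\mathbb{F}_{q^{l_r}}K$, which is Galois over $K$ with $G=\mathrm{Gal}(L/K)$ cyclic of order $l_r$, generated by the Frobenius $\sigma\colon x\mapsto x^{q}$ of the constant field. By Lemma 4.1 a prime $P$ of degree $d$ factors into $\gcd(l_r,d)$ primes of $L$, so $P$ splits completely precisely when $l_r\mid d$, that is, when $(P,L/K)=\{\mathrm{id}\}$. Therefore $S(r,P_K)$ is the set of primes with trivial Frobenius, and in the notation of Theorem 4.2 — with $E=\mathbb{F}_{q^{l_r}}$, $l=l_r$, $m=[L:KE]=1$, $\mathscr{C}=\{\mathrm{id}\}$ and $a=0$ — its primes of degree $k$ are counted by $\mathcal{C}_k(L/K,\{\mathrm{id}\})$, which vanishes unless $l_r\mid k$ and then equals $b_k=q^{k}/k+O(q^{k/2})$, where $b_k$ denotes the total number of degree-$k$ primes of $K$.

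Writing $u=q^{-s}$, the denominator of the Dirichlet density is $\sum_{P}|P|^{-s}=\sum_k b_k u^{k}=-\log(1-qu)+O(1)$ as $s\to 1^{+}$, using $b_k=q^{k}/k+O(q^{k/2})$. The numerator is $\sum_{l_r\mid k}b_k u^{k}$, whose main term is $\tfrac1{l_r}\bigl(-\log(1-(qu)^{l_r})\bigr)$; factoring $1-w^{l_r}=(1-w)(1+w+\cdots+w^{l_r-1})$ with $w=qu\to 1^{-}$ rewrites this as $\tfrac1{l_r}\bigl(-\log(1-qu)\bigr)+O(1)$. Hence the ratio of numerator to denominator tends to $1/l_r$, which proves (1); since this density is positive, $S(r,P_K)$ is infinite, in particular nonempty, which completes (2).

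The step I expect to be the crux is the Dirichlet density evaluation, for two reasons. First, one must avoid natural density: because $S(r,P_K)$ meets only degrees divisible by $l_r$, the count $|S(r,P_K)(t)|$ is dominated by its top degree, so $|S(r,P_K)(t)|/\pi_K(t)$ oscillates with $t\bmod l_r$ and does not converge, which is exactly why the $s\to 1^{+}$ averaging is indispensable here. Second, one must verify that neither the error $O(q^{k/2})$ nor the bounded factor $1+w+\cdots+w^{l_r-1}$ affects the limit; this rests on the fact that the only singularity of $Z_K(u)=\mathcal{L}_K(u)/((1-u)(1-qu))$ on the circle $|u|=q^{-1}$ is the simple pole at $u=q^{-1}$, the numerator $\mathcal{L}_K$ being nonzero there since its reciprocal roots have absolute value $q^{1/2}$.
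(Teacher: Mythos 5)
Your proof is correct, and its skeleton coincides with the paper's: you translate $r\mid(|P|-1)$ into $l_r\mid\deg P$, invoke Lemma 4.1 to identify $S(r,P_K)$ with the set of primes splitting completely in the constant field extension of degree $l_r$ (your $L=\mathbb{F}_{q^{l_r}}K$ is the same field as the paper's $K_r=K(\zeta_r)$, since $\mathbb{F}_q(\zeta_r)=\mathbb{F}_{q^{l_r}}$), and handle part (2) by the same two observations (positive density gives nonemptiness; $r\mid(q^s-1)$ forces $\gcd(r,q)=1$). Where you genuinely diverge is the density evaluation, which is the heart of the lemma: the paper simply cites Rosen [10, Proposition 9.13] for the statement that primes splitting completely in a degree-$l_r$ Galois extension have Dirichlet density $1/l_r$, whereas you prove this from scratch, feeding the effective Chebotarev bound of Theorem 4.2 into the Dirichlet series, extracting the logarithmic singularities $-\log(1-qu)$ and $\tfrac{1}{l_r}\bigl(-\log(1-(qu)^{l_r})\bigr)$, and factoring $1-w^{l_r}=(1-w)(1+w+\cdots+w^{l_r-1})$ to compare them as $s\to 1^{+}$. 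Your computation is sound (the $O(q^{k/2})$ errors contribute $O(1)$ to both series since $\sum_k q^{k(1/2-s)}$ is bounded for $s>1$), and it buys self-containedness: the lemma then rests only on results already stated in the paper, rather than on an external density theorem. The paper's citation is shorter and avoids any analytic estimate; your route has the side benefit of making explicit why Dirichlet density succeeds exactly where natural density fails (the oscillation you describe is precisely the content of the paper's Lemma 4.5 and its Remark 4.4 correcting Wu).
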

\begin{proof}
(1) Assume $|P|=q^{s}$, then $r|(|P|-1)$ if and only if $q^{s}
\equiv 1 (${\rm mod} $r)$. Hence there exists a minimal positive
integer $l_{r}$ such that $q^{l_{r}} \equiv 1 (${\rm mod} $r).$ In
fact $l_{r}$ is the multiplicative order of $q$ modulo $r$. Thus
$q^{s} \equiv 1 (${\rm mod} $r)$ if and only if $l_{r}|s$. Hence
$r|(|P|-1)$ if and only if $l_{r}|s$.

Let $H_{r}=\{\zeta:\zeta^{r}=1\}\subset\overline{\mathbb{F}}_{\it
q}$ (algebraic closure of $\mathbb{F}_{q})$. Let $\zeta_{r}$ be
a generator of $H_{r}$. Let $\mathbb{F}=\mathbb{F_{\it
q}}(\zeta_{r})$. Then $[\mathbb{F}:\mathbb{F}_{q}]=l_{r}$. Let
$K_{r}=K(\zeta_{r})$. Then $K_{r}$ is a constant field extension
over $K$ of degree $l_{r}$. By Lemma 4.1 and the conclusion in the
above paragraph, we know that there is a bijection between
$S(r,P_{K})$ and the set of primes of $\mathcal{O}_K$ which are
split completely in $K_{r}$. By [10, Proposition 9.13], we have
 \begin{center}
    $\delta(S(r,P_{K}))=\frac{1}{l_{r}}$.
 \end{center}

 (2) Suppose that ${\rm gcd}(r,q)=1$. By $(1)$, we know that $S(r,P_{K})$ is not empty. Conversely,
 suppose $S(r,P_{K})$ is not empty. Then there exists $P \in P_{K}$, such that
 $r|(|P|-1)$. Assume $|P|=q^{s}$, then $r|(q^{s}-1)$, thus ${\rm gcd}(r,q)=1$.
\end{proof}

\begin{rem}
{\rm
 Wu ([12, Lemma 4.2]) proved the same result about the natural density of $S(r,P_{K})$ for the case of rational function
fields. But her result is not correct, because she used the
conclusion for the Dirichlet density to compute the natural density.
In fact, if we further assume $r\nmid q-1$, the natural density of
$S(r,P_{K})$ doesn't exist, see Lemma 4.5. }
\end{rem}

Let $S(r,P_{K}(t))$ be the set of all primes in $P_{K}(t)$ such that
$r|(|P|-1)$, and $C(r,P_{K}(t))=|S(r,P_{K}(t)|$. The next lemma
shows that if gcd$(r,q)=1$ and $r\nmid q-1$, then the natural
density of $S(r,P_{K})$ doesn't exist.

\begin{lem}
 If ${\rm gcd}(r,q)=1$, then the limit
  $\lim\limits_{t \to \infty}\frac{C(r,P_{K}(t))}{\pi_{K}(t)}$ equals to 1 or doesn't exist.
  In particular, $\lim\limits_{t \to
  \infty}\frac{C(r,P_{K}(t))}{\pi_{K}(t)}=1$ if and only if $r|q-1$.
\end{lem}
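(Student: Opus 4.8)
The plan is to reduce the statement to the distribution of prime degrees in $K$, using the elementary congruence observation already established inside the proof of Lemma 4.3. Recall from there that, under $\gcd(r,q)=1$, a prime $P$ of $K$ (with $|P|=q^{\deg P}$) satisfies $r\mid(|P|-1)$ if and only if $l_{r}\mid\deg P$, where $l_{r}$ is the multiplicative order of $q$ modulo $r$. Writing $b_{k}$ for the number of primes of $K$ of degree exactly $k$, this turns the two counting functions into
\[
 C(r,P_{K}(t))=\sum_{\substack{k\le t\\ l_{r}\mid k}}b_{k},\qquad
 \pi_{K}(t)=\sum_{k\le t}b_{k}.
\]

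First I would pin down the size of $b_{k}$. Applying Theorem 4.2 to the trivial Galois extension $L=K$ with $\mathscr{C}=\{1\}$ (so that $l=m=1$ and $\mathcal{C}_{k}=b_{k}$) yields $b_{k}=\frac{q^{k}}{k}+O(q^{k/2})$, with an implied constant depending only on $K$. Summing this, the error contributes only $O(q^{t/2})=o(q^{t}/t)$, and the main term is dominated by its top summand: one checks that $\pi_{K}(t)\sim\frac{q}{q-1}\cdot\frac{q^{t}}{t}$, and more generally that any partial sum of the $\frac{q^{k}}{k}$ over an arithmetic progression of step $l_{r}$ is asymptotic to its largest term times $(1-q^{-l_{r}})^{-1}$.

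I would then split into two cases according to $l_{r}$. If $r\mid q-1$ then $l_{r}=1$, the divisibility $l_{r}\mid\deg P$ holds for every $P$, so $C(r,P_{K}(t))=\pi_{K}(t)$ identically and the ratio is constantly $1$, giving the limit $1$. If $r\nmid q-1$ then $l_{r}\ge 2$; the largest $k\le t$ divisible by $l_{r}$ equals $t-(t\bmod l_{r})$, so along the subsequence $t\equiv r_{0}\pmod{l_{r}}$ the top term of $C(r,P_{K}(t))$ has order $q^{t-r_{0}}/t$ and
\[
 \lim_{\substack{t\to\infty\\ t\equiv r_{0}\,(\mathrm{mod}\,l_{r})}}
 \frac{C(r,P_{K}(t))}{\pi_{K}(t)}
 = q^{-r_{0}}\cdot\frac{q^{l_{r}-1}(q-1)}{q^{l_{r}}-1}.
\]
Because $q\ge 2$, the factors $q^{-r_{0}}$ for $r_{0}=0,1,\dots,l_{r}-1$ are pairwise distinct, so at least two subsequential limits disagree and the full limit fails to exist. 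This establishes both claims: the limit is $1$ exactly when $l_{r}=1$, i.e.\ when $r\mid q-1$, and otherwise it does not exist.

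The main obstacle is the asymptotic bookkeeping of the middle paragraph: one must confirm that the summed Weil-type error stays of order $q^{t/2}$ and is negligible against the dominant term $q^{t}/t$, and that each partial sum is genuinely controlled by its top term so that the subsequential limits can be evaluated exactly. Conceptually, however, the key point is simply that the condition $l_{r}\mid\deg P$ introduces a dependence on $t\bmod l_{r}$ that oscillates, and this oscillation is what destroys convergence once $l_{r}>1$.
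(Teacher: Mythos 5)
Your proof is correct and follows essentially the same route as the paper: both reduce the problem to counting primes by degree via the equivalence $r \mid (|P|-1) \Leftrightarrow l_{r} \mid \deg P$, invoke the effective Chebotarev bound of Theorem 4.2 to get the count of degree-$k$ primes as $\frac{q^{k}}{k} + O(q^{k/2})$, and then exhibit distinct subsequential limits along residue classes modulo $l_{r}$ (the paper's two subsequences $tl_{r}$ and $(t+1)l_{r}-1$ are exactly your cases $r_{0}=0$ and $r_{0}=l_{r}-1$, evaluated there via Stolz's theorem rather than by your top-term asymptotics, and yielding the same two values $\frac{q^{l_{r}-1}(q-1)}{q^{l_{r}}-1}$ and $\frac{q-1}{q^{l_{r}}-1}$). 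Your observation that $C(r,P_{K}(t))=\pi_{K}(t)$ identically when $l_{r}=1$ is a slightly cleaner handling of the $r \mid q-1$ case, but the substance of the argument is the same.
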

\begin{proof}
We compute the quantities in Theorem 4.2 in our case. It is obvious
that $l=l_{r}$. Since $K_{r}/K$ is a constant field extension, then
$m=1$ and $G={\rm Gal}(K_{r}/K)$ is an abelian group, in fact
$G={\rm Gal}(\mathbb{F}/\mathbb{F}_{q})=<\sigma>$, where $\sigma$ is
the Frobenius of the extension $\mathbb{F}/\mathbb{F}_{q}$. Thus
every $\mathscr{C}$ contains only one element, hence
$|\mathscr{C}|=1$. We use its element to denote each conjugacy
$\mathscr{C}$ of $G$. Thus for every $\sigma^{s} \in G$, $s \in
\{0,1,2,\cdots,l_{r}-1\}$, if
 $\mathcal{S}_{k}(K_{r}/K,\sigma^{s})$ is not empty, then
 $|\mathcal{C}_{k}(K_{r}/K,\sigma^{s})-\frac{1}{k}q^{k}| < cq^{k/2}$, where $c=4(d^{2}+
\frac{1}{2}g_{K_{r}}d+\frac{1}{2}g_{K_{r}}+g_{K}+1)$. Since a prime
ideal $P$ of $\mathcal{O}_{K}$ is split completely in $K_{r}/K$ if
and only if $(P,K_{r}/K)=1$, that is $a=0$. Hence by Theorem 4.2
(1), we have
 \begin{center}
   $S(r,P_{K}(t))=\bigcup\limits_{k \le t, l_{r}|k}\mathcal{S}_{k}(K_{r}/K,1)$.
 \end{center}
So $C(r,P_{K}(t))=\sum\limits_{k \le t,
l_{r}|k}\mathcal{C}_{k}(K_{r}/K,1)$.

Since all prime ideals of $\mathcal{O}_{K}$ are unramified in
$K_{r}$, the set of all primes of $K$ with degree $k$ is
$\bigcup\limits_{s}\mathcal{S}_{k}(K_{r}/K,\sigma^{s})$, where $s$
runs over the set $\{0,1,2,\cdots,l_{r}-1\}$. By Theorem 4.2 (1), we
conclude that $\mathcal{S}_{k}(K_{r}/K,\sigma^{s})$ is not empty if
and only if $k \equiv s (${\rm mod }$l_{r})$. Hence the set of all
primes of $K$ with degree $k$ is
$\mathcal{S}_{k}(K_{r}/K,\sigma^{k})$. Hence
$\pi_{K}(t)=\sum\limits_{k \le
t}\mathcal{C}_{k}(K_{r}/K,\sigma^{k})$.

So we have
 \begin{center}
   $\sum\limits_{k \le t, l_{r}|k}(\frac{q^{k}}{k}-cq^{k/2})<C(r,P_{K}(t))<\sum\limits_{k \le t, l_{r}|k}(\frac{q^{k}}{k}+cq^{k/2})$,

   $\sum\limits_{k \le t}(\frac{q^{k}}{k}-cq^{k/2})<\pi_{K}(t)<\sum\limits_{k \le t}(\frac{q^{k}}{k}+cq^{k/2})$.
 \end{center}

Thus
 \begin{equation}
   \frac{\sum\limits_{k \le t, l_{r}|k}(\frac{q^{k}}{k}-cq^{k/2})}{\sum\limits_{k \le t}(\frac{q^{k}}{k}+cq^{k/2})}
   <\frac{C(r,P_{K}(t))}{\pi_{K}(t)}
   <\frac{\sum\limits_{k \le t, l_{r}|k}(\frac{q^{k}}{k}+cq^{k/2})}{\sum\limits_{k \le t}(\frac{q^{k}}{k}-cq^{k/2})}.
   \notag
 \end{equation}
for large enough $t$.

First we have
\begin{equation}
\lim\limits_{t \to \infty}\frac{\sum\limits_{k \le t, l_{r}|k}(\frac{q^{k}}{k}-cq^{k/2})}{\sum\limits_{k \le t}(\frac{q^{k}}{k}+cq^{k/2})}
=
\lim\limits_{t \to \infty}\frac{\sum\limits_{k \le t, l_{r}|k}(\frac{q^{k}}{k}+cq^{k/2})}{\sum\limits_{k \le t}(\frac{q^{k}}{k}-cq^{k/2})}
=
\lim\limits_{t \to \infty}\frac{\sum\limits_{k \le t, l_{r}|k}\frac{q^{k}}{k}}{\sum\limits_{k \le t}\frac{q^{k}}{k}}.
\notag
\end{equation}

If $l_{r}=1$, that is $r|q-1$, then $\lim\limits_{t \to \infty}\frac{\sum\limits_{k \le t, l_{r}|k}\frac{q^{k}}{k}}{\sum\limits_{k \le t}\frac{q^{k}}{k}}=1$, i.e.
 $\lim\limits_{t \to \infty}\frac{C(r,P_{K}(t))}{\pi_{K}(t)}=1. $

Otherwise if $l_{r}>1$, suppose that  $\lim\limits_{t \to \infty}\frac{C(r,P_{K}(t))}{\pi_{K}(t)}$ exists.
We choose a subsequence
$\{\frac{C(r,P_{K}(tl_{r}))}{\pi_{K}(tl_{r})}\}$ of the sequence
$\{\frac{C(r,P_{K}(t))}{\pi_{K}(t)}\}$. Then we have
 \begin{equation}
   \frac{\sum\limits_{k \le t}(\frac{q^{kl_{r}}}{kl_{r}}-cq^{kl_{r}/2})}{\sum\limits_{k \le tl_{r}}(\frac{q^{k}}{k}+cq^{k/2})}
   <\frac{C(r,P_{K}(tl_{r}))}{\pi_{K}(tl_{r})}
   <\frac{\sum\limits_{k \le t}(\frac{q^{kl_{r}}}{kl_{r}}+cq^{kl_{r}/2})}{\sum\limits_{k \le tl_{r}}(\frac{q^{k}}{k}-cq^{k/2})}.
   \notag
 \end{equation}

Thus
\begin{equation}
\lim\limits_{t \to \infty}\frac{\sum\limits_{k \le
t}(\frac{q^{kl_{r}}}{kl_{r}}+cq^{kl_{r}/2})} {\sum\limits_{k \le
tl_{r}}(\frac{q^{k}}{k}-cq^{k/2})}
=
   \lim\limits_{t \to \infty}\frac{\sum\limits_{k \le t}(\frac{q^{kl_{r}}}{kl_{r}}-cq^{kl_{r}/2})}
{\sum\limits_{k \le tl_{r}}(\frac{q^{k}}{k}+cq^{k/2})}
=
\lim\limits_{t \to \infty}
\frac{\sum\limits_{k \le t}\frac{q^{kl_{r}}}{kl_{r}}}{\sum\limits_{k
\le tl_{r}}\frac{q^{k}}{k}}. \notag
\end{equation}

 Let $b_{k}=\frac{q^{kl_{r}}}{kl_{r}}, a_{k}=\sum\limits_{i=(k-1)l_{r}+1}^{kl_{r}}\frac{q^{i}}{i}$,
then $\lim\limits_{t \to \infty} \frac{\sum\limits_{k \le
t}\frac{q^{kl_{r}}}{kl_{r}}}{\sum\limits_{k \le
tl_{r}}\frac{q^{k}}{k}}= \lim\limits_{t \to
\infty}\frac{\sum\limits_{k \le t}b_{k}}{\sum\limits_{k \le
t}a_{k}}$.

By Stolz Theorem, we have
\begin{equation}
\lim\limits_{t \to \infty}\frac{\sum\limits_{k \le
t}b_{k}}{\sum\limits_{k \le t}a_{k}}= \lim\limits_{k \to
\infty}\frac{b_{k}}{a_{k}}
=\frac{1}{\frac{1}{q^{l_{r}-1}}+\frac{1}{q^{l_{r}-2}}+\cdots+\frac{1}{q}+1}.
\notag
\end{equation}

Hence
 \begin{equation}
   \lim\limits_{t \to \infty}\frac{C(r,P_{K}(tl_{r}))}{\pi_{K}(tl_{r})}
   =\frac{1}{\frac{1}{q^{l_{r}-1}}+\frac{1}{q^{l_{r}-2}}+\cdots+\frac{1}{q}+1}.
 \end{equation}

We choose another subsequence
$\{\frac{C(r,P_{K}((t+1)l_{r}-1))}{\pi_{K}((t+1)l_{r}-1)}\}$ of the
sequence $\{\frac{C(r,P_{K}(t))}{\pi_{K}(t)}\}$. Then we have

\begin{equation}
   \frac{\sum\limits_{k \le t}(\frac{q^{kl_{r}}}{kl_{r}}-cq^{kl_{r}/2})}{\sum\limits_{k \le (t+1)l_{r}-1}(\frac{q^{k}}{k}+cq^{k/2})}
   <\frac{C(r,P_{K}((t+1)l_{r}-1))}{\pi_{K}((t+1)l_{r}-1)}
   <\frac{\sum\limits_{k \le t}(\frac{q^{kl_{r}}}{kl_{r}}+cq^{kl_{r}/2})}{\sum\limits_{k \le (t+1)l_{r}-1}(\frac{q^{k}}{k}-cq^{k/2})}.
   \notag
 \end{equation}

Thus
\begin{equation}
\lim\limits_{t \to \infty}\frac{\sum\limits_{k \le
t}(\frac{q^{kl_{r}}}{kl_{r}}+cq^{kl_{r}/2})} {\sum\limits_{k \le
(t+1)l_{r}-1}(\frac{q^{k}}{k}-cq^{k/2})}
=
 \lim\limits_{t \to \infty}\frac{\sum\limits_{k \le t}(\frac{q^{kl_{r}}}{kl_{r}}-cq^{kl_{r}/2})}
 {\sum\limits_{k \le (t+1)l_{r}-1}(\frac{q^{k}}{k}+cq^{k/2})}
=
\lim\limits_{t \to \infty}
\frac{\sum\limits_{k \le t}\frac{q^{kl_{r}}}{kl_{r}}}{\sum\limits_{k
\le (t+1)l_{r}-1}\frac{q^{k}}{k}}. \notag
\end{equation}

Let $b_{k}=q^{kl_{r}}/(kl_{r})$ and $a_{1}=\sum_{i=1}^{2l_{r}-1}q^{i}/i$, while
$a_{k}=\sum_{i=kl_{r}}^{(k+1)l_{r}-1}q^{i}/i$ if $k>1$. then $\lim\limits_{t \to \infty} \frac{\sum\limits_{k \le
t}\frac{q^{kl_{r}}}{kl_{r}}}{\sum\limits_{k \le
(t+1)l_{r}-1}\frac{q^{k}}{k}}= \lim\limits_{t \to
\infty}\frac{\sum\limits_{k \le t}b_{k}}{\sum\limits_{k \le
t}a_{k}}$.
By the Stolz Theorem, we have
\begin{equation}
\lim\limits_{t \to \infty}\frac{\sum\limits_{k \le
t}b_{k}}{\sum\limits_{k \le t}a_{k}}= \lim\limits_{k \to
\infty}\frac{b_{k}}{a_{k}}
=\frac{1}{1+q+\dots+q^{l_{r}-1}}.
\notag
\end{equation}
Hence
\begin{equation}
   \lim\limits_{t \to \infty}\frac{C(r,P_{K}((t+1)l_{r}-1))}{\pi_{K}((t+1)l_{r}-1)}
   =\frac{1}{1+q+\dots+q^{l_{r}-1}}.
 \end{equation}

But by the hypothesis, we have
\begin{equation}
\lim\limits_{t \to \infty}\frac{C(r,P_{K}(tl_{r}))}{\pi_{K}(tl_{r})}
=\lim\limits_{t \to
\infty}\frac{C(r,P_{K}((t+1)l_{r}-1))}{\pi_{K}((t+1)l_{r}-1)}.
\notag
\end{equation}

Comparing (4.1) with (4.2), we get a contradiction. Hence
$\lim\limits_{t \to \infty}\frac{C(r,P_{K}(t))}{\pi_{K}(t)}$ doesn't
exist if $l_{r}>1$.
\end{proof}

\begin{rem}
{\rm (1) From the proof of Lemma 4.3 (1), we know that if
gcd$(r,q)=1$, then there is a bijection between
 $S(r,P_{K})$ and the set of prime ideals in $\mathcal{O}_{K}$
 which are split completely in
 $K_{r}$. Thus Lemma 4.5 tells us that if gcd$(r,q)=1$ and $r\nmid
 q-1$,
 the natural density of the set of prime ideals of $\mathcal{O}_{K}$
 which are split completely in
 $K_{r}$ doesn't
 exist.

 (2) From the proof of Lemma 4.5, we know that the set of all primes
of $K$ with degree $k$ is $\mathcal{S}_{k}(K_{r}/K,\sigma^{k})$, and
$||\mathcal{S}_{k}(K_{r}/K,\sigma^{k})|-\frac{1}{k}q^{k}| <
cq^{k/2}$.}
\end{rem}

We follow M.Nilsson's idea (see [8, Theorem 4.9]) to get the
following lemma.

\begin{lem}
Suppose $m$ is a positive integer.    
{\rm (1)} Let r be a positive divisor of $m$. Let $\bar{A}(m,r,P_{K})=\{P\in P_{K}:{\rm gcd}(m,|P|-1)=r\}$. Then
  \begin{center}
   $\sum\limits_{r|m}r\delta(\bar{A}(m,r,P_{K}))=\sum\limits_{r|m^{*}}\frac{1}{l_{r}}\varphi(r)$,
  \end{center}
  where $m^{*}$ is the largest divisor of $m$ such that relatively prime to $q$, $\varphi$ is Euler's $\varphi$-function.

  {\rm (2)} If $m$ is a prime number, ${\rm gcd}(m,q)=1$ and $m \nmid q-1$, then  \\
  $\lim\limits_{t \to \infty}\frac{1}{\pi_{K}(t)}\sum\limits_{P \in P_{K}(t)}{\rm gcd}(m,|P|-1)$ doesn't exist.
\end{lem}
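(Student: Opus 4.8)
The plan for part (1) is to imitate the proof of Lemma 3.10 almost verbatim, replacing the appeal to Dirichlet's theorem for arithmetic progressions by the density computation of Lemma 4.3. First I would introduce the auxiliary set $\bar{B}(m,r,P_K)=\{P\in P_K:r\mid(|P|-1)\}$, which is exactly the set $S(r,P_K)$ of Lemma 4.3. Since for a prime $P$ the quantity $\gcd(m,|P|-1)$ is a single well-defined divisor $kr$ of $m$, one obtains the disjoint decomposition $\bar{B}(m,r,P_K)=\bigcup_{kr\mid m}\bar{A}(m,kr,P_K)$, and hence $\delta(\bar{B}(m,r,P_K))=\sum_{kr\mid m}\delta(\bar{A}(m,kr,P_K))$. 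Applying the Möbius inversion of Lemma 3.9 then yields $\delta(\bar{A}(m,r,P_K))=\sum_{kr\mid m}\mu(k)\,\delta(S(kr,P_K))$, which in particular shows that these densities exist, since each $\delta(S(d,P_K))$ exists by Lemma 4.3.

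Next I would substitute the value of $\delta(S(d,P_K))$ provided by Lemma 4.3: it equals $1/l_d$ when $\gcd(d,q)=1$, and equals $0$ otherwise, because in the latter case $S(d,P_K)$ is empty by Lemma 4.3(2). Plugging this into the weighted sum and reindexing with $d=kr$ gives $\sum_{r\mid m}r\,\delta(\bar{A}(m,r,P_K))=\sum_{d\mid m}\delta(S(d,P_K))\sum_{r\mid d}r\,\mu(d/r)$. The inner sum is $\varphi(d)$, the same identity used at the end of Lemma 3.10, so the expression collapses to $\sum_{d\mid m}\varphi(d)\,\delta(S(d,P_K))$. Finally, since $\delta(S(d,P_K))$ vanishes unless $\gcd(d,q)=1$, that is unless $d\mid m^{*}$, only the divisors of $m^{*}$ contribute, and the sum becomes $\sum_{d\mid m^{*}}\varphi(d)/l_d$, which is precisely the claimed formula.

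For part (2) I would argue directly from Lemma 4.5. Because $m$ is prime, $\gcd(m,|P|-1)$ takes only the value $m$ (when $P\in S(m,P_K)$) or $1$ (otherwise), so $\sum_{P\in P_K(t)}\gcd(m,|P|-1)=\pi_K(t)+(m-1)\,C(m,P_K(t))$, and the average in question equals $1+(m-1)\,C(m,P_K(t))/\pi_K(t)$. The hypotheses $\gcd(m,q)=1$ and $m\nmid q-1$ force $l_m>1$, so by Lemma 4.5 the ratio $C(m,P_K(t))/\pi_K(t)$ has no limit as $t\to\infty$; since $m-1\ge 1$, the average cannot converge either. The only point genuinely requiring care lies in part (1): one must confirm the vanishing of the terms with $\gcd(d,q)>1$ so that the final sum is correctly restricted to $m^{*}$, and keep track of the existence of the Dirichlet densities through the inversion formula. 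Part (2) is then an essentially immediate corollary of Lemma 4.5.
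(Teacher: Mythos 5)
Your proof is correct and follows essentially the same route as the paper's: the same disjoint decomposition of $S(r,P_K)$ into the sets $\bar{A}(m,kr,P_K)$, M\"obius inversion via Lemma 3.9, the density values from Lemma 4.3, and the identity $\varphi(s)=\sum_{r|s}r\mu(s/r)$, with part (2) reduced to the non-existence result of Lemma 4.5. The only cosmetic differences are that you handle divisors $d$ with $\gcd(d,q)>1$ by letting their densities vanish rather than restricting to $m^{*}$ at the outset, and in (2) you exploit primality of $m$ directly (so $\gcd(m,|P|-1)\in\{1,m\}$) instead of first performing the finite-$t$ M\"obius inversion for general $m$; indeed, your retention of the factor $m-1=\varphi(m)$ in the final limit is slightly more careful than the paper's last display, which drops it harmlessly.
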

\begin{proof}
 (1) By Lemma 4.3(2), if gcd$(r,q)>1$, $S(r,P_{K})$ and $\bar{A}(m,r,P_{K})$ are both empty.
      So we can suppose that ${\rm gcd}(r,q)=1$.
  Note that $S(r,P_{K})=\{P\in P_{K}: r|(|P|-1)\}$,
 we have $S(r,P_{K})=\bigcup\limits_{kr|m^{*}}\bar{A}(m,kr,P_{K})$. Since this is a disjoint union,
we have
$\delta(S(r,P_{K}))=\sum\limits_{kr|m^{*}}\delta(\bar{A}(m,kr,P_{K}))$.
 By Lemma 4.3, we have $\delta(S(r,P_{K}))=\frac{1}{l_{r}}$. Hence we have
 \begin{center}
   $\delta(\bar{A}(m,r,P_{K}))=\sum\limits_{kr|m^{*}}\mu(k)\delta(S(kr,P_{K}))=\sum\limits_{kr|m^{*}}\frac{\mu(k)}{l_{kr}}$.
 \end{center} from Lemma 3.9.
 Therefore
 \begin{equation}
  \begin{array}{lll}
    \sum\limits_{r|m}r\delta(\bar{A}(m,r,P_{K}))&=&\sum\limits_{r|m^{*}}r\delta(\bar{A}(m,r,P_{K}))
    =\sum\limits_{r|m^{*}}r\sum\limits_{kr|m^{*}}\frac{\mu(k)}{l_{kr}}\\
    &=&\sum\limits_{s|m^{*}}\sum\limits_{r|s}r\frac{\mu(\frac{s}{r})}{l_{s}}
    =\sum\limits_{s|m^{*}}\frac{1}{l_{s}}\sum\limits_{r|s}r\mu(\frac{s}{r})\\
    &=&\sum\limits_{s|m^{*}}\frac{1}{l_{s}}\varphi(s),
  \end{array}
  \notag
 \end{equation}
 since $\varphi(s)=\sum\limits_{r|s}r\mu(\frac{s}{r})$.

 (2) Let $t \in \mathbb{N}$, put $B(m,P_{K}(t))=\sum\limits_{P \in P_{K}(t)}{\rm gcd}(m,|P|-1)$.
Let $r$ be a positive divisor of $m$,
  put $\bar{A}(m,r,P_{K}(t))=\{P\in P_{K}(t):{\rm gcd}(m,|P|-1)=r\}$ and $A(m,r,P_{K}(t))=|\bar{A}(m,r,P_{K}(t))|$.
 Thus $B(m,P_{K}(t))=\sum\limits_{r|m}rA(m,r,P_{K}(t))$ and $S(r,P_{K}(t))=\bigcup\limits_{kr|m}\bar{A}(m,kr,P_{K}(t))$.
 So $C(r,P_{K}(t))=\sum\limits_{kr|m}A(m,kr,P_{K}(t))$. We have
 $A(m,r,P_{K}(t))=\sum\limits_{kr|m}\mu(k)C(kr,P_{K}(t))$ by Lemma 3.9.
 Hence
 \begin{equation}
  \begin{array}{lll}
   B(m,P_{K}(t))&=&\sum\limits_{r|m}rA(m,r,P_{K}(t))=\sum\limits_{r|m}r\sum\limits_{kr|m}\mu(k)C(kr,P_{K}(t))\\
   &=&\sum\limits_{s|m}\sum\limits_{r|s}r\mu(\frac{s}{r})C(s,P_{K}(t))=\sum\limits_{s|m}C(s,P_{K}(t))\sum\limits_{r|s}r\mu(\frac{s}{r})\\
   &=&\sum\limits_{s|m}C(s,P_{K}(t))\varphi(s),
  \end{array}
  \notag
 \end{equation}
 since $\varphi(s)=\sum\limits_{r|s}r\mu(\frac{s}{r})$.

 By the hypothesis of this Lemma, we have
 \begin{center}
  $B(m,P_{K}(t))=C(1,P_{K}(t))+C(m,P_{K}(t))\varphi(m)=\pi_{K}(t)+C(m,P_{K}(t))\varphi(m)$.
 \end{center}
 So
 \begin{equation}
  \begin{array}{lll}
  \lim\limits_{t \to \infty}\frac{1}{\pi_{K}(t)}\sum\limits_{P \in P_{K}(t)}{\rm gcd}(m,|P|-1)
  &=&\lim\limits_{t \to \infty}\frac{1}{\pi_{K}(t)}B(m,P_{K}(t))\\
  &=&\lim\limits_{t \to \infty}(1+\frac{1}{\pi_{K}(t)}C(m,P_{K}(t))),
  \end{array}
  \notag
 \end{equation}
by Lemma 4.5, this limit doesn't exist.
\end{proof}

\begin{rem}
{\rm For every prime ideal $P$ in $\mathcal{O}_{K}$, the ring of
integers of $K$, $\mathcal{O}_{K}/P$ is a finite field. Lemma 2.3
tells us that the equation $x^{m}=1 $ has ${\rm gcd}(m,|P|-1)$
solutions in $\mathcal{O}_{K}/P$. Thus we call
$\sum\limits_{r|m}r\delta(\bar{A}(m,r,P_{K}))$ the {\it Dirichlet mean value} of the number of solutions of $x^{m}=1$ in $\mathcal{O}_{K}/P$
with respect to $P$. }
\end{rem}

From the proof of Lemma 4.7 (2), we have
\begin{center}
 $B(m,P_{K}(t))=\sum\limits_{r|m}C(r,P_{K}(t))\varphi(r)=\sum\limits_{r|m^{*}}C(r,P_{K}(t))\varphi(r)$,
\end{center}
for every positive integer $m$.

 Then
\begin{equation}
\lim\limits_{t \to \infty}\frac{1}{\pi_{K}(t)}\sum\limits_{P \in
P_{K}(t)}{\rm gcd}(m,|P|-1)= \lim\limits_{t \to
\infty}\frac{1}{\pi_{K}(t)}\sum\limits_{r|m^{*}}C(r,P_{K}(t))\varphi(r).
\end{equation}
Lemma 4.5 tells us that $\lim\limits_{t \to
\infty}\frac{1}{\pi_{K}(t)}C(r,P_{K}(t))\varphi(r)$ doesn't exist
for each $r|m^{*}$ and $r\nmid q-1$. But it is not easy to answer whether the limit
(4.3) exists in general case. Lemma 4.7 (2) tells us that if
$m^{*}$ is a prime and $m^{*}\nmid q-1$, then the
asymptotic mean value of the number of solutions of $x^{m}=1$ in $\mathcal{O}_{K}/P$
with respect to $P$ doesn't exist.

For every prime ideal $P$ of $\mathcal{O}_{K}$, since
$\mathcal{O}_{K}/P$ is a finite field, we can consider our monomial
system $f:\mathcal{O}_{K}/P \to \mathcal{O}_{K}/P$, where
\begin{equation}
  f(x)=x^{n}, n \ge 2.
\end{equation}

Let $\mathcal{P}(r,P)$ denote the number of $r$-periodic points of $f$.
Let $\mathcal{C}(r,P)$ denote the number of
$r$-cycles of $f$. Then $f$ has all the properties
stated in Section 2. So we can use the conclusions and notations in
Section 2.

We have the following result on the asymptotic mean value of the number of fixed points
in $\mathcal{O}_{K}/P$ with respect to $P$.

\begin{prop}                                                           
 If $n-1$ is a prime number, ${\rm gcd}(n-1,q)=1$ and $(n-1)\nmid q-1$, then
 $\lim\limits_{t \to \infty}\frac{1}{\pi_{K}(t)}\sum\limits_{P \in P_{K}(t)}\mathcal{P}(1,P)$
 doesn't exist. Thus the asymptotic mean value of the number of fixed points of f
 doesn't exist.
\end{prop}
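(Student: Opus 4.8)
The plan is to reduce the statement to Lemma 4.7 (2) by isolating the fixed-point count as a gcd plus a constant. First I would specialize formula (2.3) of Proposition 2.5 to $r=1$: since the only divisor of $1$ is $1$ and $\mu(1)=1$, the M\"obius sum collapses to a single term and we get $\mathcal{P}(1,P) = {\rm gcd}(n-1,|P|-1) + 1$. This is also transparent from Lemma 2.3, as it simply counts the nonzero solutions of $x^{n-1}=1$ in $\mathcal{O}_{K}/P$ together with the fixed point $0$.

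Next I would average over all primes $P$ of $K$ with ${\rm deg}\,P \le t$ and split off the constant term, obtaining
\[
\frac{1}{\pi_{K}(t)} \sum_{P \in P_{K}(t)} \mathcal{P}(1,P) = 1 + \frac{1}{\pi_{K}(t)} \sum_{P \in P_{K}(t)} {\rm gcd}(n-1,|P|-1).
\]
The second summand on the right is precisely the quantity appearing in Lemma 4.7 (2), taken with $m = n-1$. I would then observe that the three hypotheses of the present proposition, namely that $n-1$ is prime, that ${\rm gcd}(n-1,q)=1$, and that $(n-1)\nmid q-1$, are exactly the hypotheses of Lemma 4.7 (2) for $m=n-1$. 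Hence that lemma gives that $\lim_{t \to \infty}\frac{1}{\pi_{K}(t)}\sum_{P \in P_{K}(t)}{\rm gcd}(n-1,|P|-1)$ does not exist. Since adding the fixed constant $1$ cannot convert a divergent sequence into a convergent one, the displayed left-hand side has no limit either, which is the assertion about $\mathcal{P}(1,P)$, and therefore the asymptotic mean value of the number of fixed points of $f$ does not exist.

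The substance of the argument is carried entirely by Lemma 4.7 (2) and, beneath it, by Lemma 4.5, where the non-existence is established by exhibiting two subsequences indexed by $t l_{r}$ and by $(t+1)l_{r}-1$ whose averages converge, via the Chebotarev estimate of Theorem 4.2 together with the Stolz theorem, to the two distinct values recorded in (4.1) and (4.2). The only genuinely new bookkeeping at this final step is that, for $m=n-1$ prime with ${\rm gcd}(n-1,q)=1$, one has $m^{*}=n-1$, so the sum $B(m,P_{K}(t))$ reduces to $\pi_{K}(t)+C(n-1,P_{K}(t))\varphi(n-1)$; the additive constant and the factor $\varphi(n-1)$ are both harmless for the question of existence of the limit. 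Thus the main obstacle was already resolved upstream in Lemma 4.5, and it is driven by the elementary fact that $l_{n-1}>1$ holds exactly when $(n-1)\nmid q-1$.
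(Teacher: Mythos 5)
Your proposal is correct and follows essentially the same route as the paper: specialize Proposition 2.5 (formula (2.3)) to $r=1$ to get $\mathcal{P}(1,P)={\rm gcd}(n-1,|P|-1)+1$, split off the constant $1$, and invoke Lemma 4.7 (2) with $m=n-1$, whose hypotheses match exactly those of the proposition. Your additional remarks about the machinery beneath Lemma 4.7 (2) (Lemma 4.5, the Chebotarev estimate, and the Stolz theorem) are accurate but describe work the paper has already done upstream rather than a different argument.
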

\begin{proof}
 By Proposition 2.5, we have
 \begin{equation}
  \begin{array}{lll}
   &&\lim\limits_{t \to \infty}\frac{1}{\pi_{K}(t)}\sum\limits_{P \in P_{K}(t)}\mathcal{P}(1,P)\\
   &=&\lim\limits_{t \to \infty}\frac{1}{\pi_{K}(t)}\sum\limits_{P \in P_{K}(t)}({\rm gcd}(n-1,|P|-1)+1)\\
   &=&\lim\limits_{t \to \infty}\frac{1}{\pi_{K}(t)}\sum\limits_{P \in P_{K}(t)}{\rm gcd}(n-1,|P|-1)+1.
  \end{array}
  \notag
 \end{equation}
 By the hypothesis of this Proposition and Lemma 4.7 (2), we get the desired result.
\end{proof}

  From Dirichlet's theorem for arithmetic progressions, we know that there are infinitely many
 prime numbers which have the form $n-1$. So there are infinitely many $n$ satisfies the hypothesis of Proposition 4.9.
 In fact it is not easy to compute the asymptotic mean value for periodic points and cycles
 in general case, the reason is the same as that we have discussed below Remark 4.8.
 But we can define and compute the Dirichlet mean value.

Let $r$ be a positive integer, by Proposition 2.5, we have
$\mathcal{P}(r,P)=\sum\limits_{d|r} \mu(d) ({\rm
gcd}(n^{r/d}-1,|P|-1)+1)$. We compute the sum of the number of $r$-periodic points
 of $f$ as follows,
\begin{equation}
 \begin{array}{lll}
  \sum\limits_{P\in P_{K}}\mathcal{P}(r,P)&=&\sum\limits_{P\in P_{K}}\sum\limits_{d|r}\mu(d)({\rm gcd}(n^{r/d}-1,|P|-1)+1)\\
  &=&\sum\limits_{d|r}\mu(d)\sum\limits_{P\in P_{K}}({\rm gcd}(n^{r/d}-1,|P|-1)+1).
 \end{array}
 \notag
\end{equation}
For each $d|r$, recall that $\bar{A}(n^{r/d}-1,k,P_{K})=\{P\in P_{K}:{\rm
gcd}(n^{r/d}-1,|P|-1)=k\}$, then we define the {\it Dirichlet mean value} of the number of $r$-periodic points of $f$ in
   $\mathcal{O}_{K}/P$ with respect to $P$ by
   \begin{center}
     $\sum\limits_{d|r}\mu(d)\sum\limits_{k|(n^{r/d}-1)}(k+1)\delta(\bar{A}(n^{r/d}-1,k,P_{K}))$,
   \end{center}
denoted by $D(r,K)$.

\begin{prop}
Let r be a positive integer. Then                         
 \begin{equation}
  \begin{array}{lll}
  D(r,K)&=&\sum\limits_{d|r}\mu(d)(\sum\limits_{k|(n^{r/d}-1)^{*}}\frac{1}{l_{k}}\varphi(k)+1)\\
  &=&\left\{ \begin{array}{ll}
                 \sum\limits_{k|(n-1)^{*}}\frac{1}{l_{k}}\varphi(k)+1 & \textrm{if $r=1$}\\
                                                     \\
                 \sum\limits_{d|r}\sum\limits_{k|(n^{r/d}-1)^{*}}\frac{1}{l_{k}}\varphi(k)\mu(d) & \textrm{if $r>1$}
              \end{array} \right.
  \end{array}
  \notag
 \end{equation}
where $(n^{r/d}-1)^{*}$ is the largest divisor of $n^{r/d}-1$ such
that relatively prime to $q$.
\end{prop}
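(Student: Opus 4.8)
The plan is to mirror the proof of Proposition 3.12, replacing the classical M\"obius/density inputs of Section 3 by their function-field analogues from Section 4. Starting from the definition
\[
D(r,K)=\sum_{d|r}\mu(d)\sum_{k|(n^{r/d}-1)}(k+1)\delta(\bar{A}(n^{r/d}-1,k,P_{K})),
\]
I would first split the inner summand as $(k+1)\delta=k\delta+\delta$, so that for each fixed $d|r$ the inner sum becomes
\[
\sum_{k|(n^{r/d}-1)}k\,\delta(\bar{A}(n^{r/d}-1,k,P_{K}))+\sum_{k|(n^{r/d}-1)}\delta(\bar{A}(n^{r/d}-1,k,P_{K})).
\]

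The first of these two sums is handled directly by Lemma 4.7 (1) with $m=n^{r/d}-1$, which gives
\[
\sum_{k|(n^{r/d}-1)}k\,\delta(\bar{A}(n^{r/d}-1,k,P_{K}))=\sum_{k|(n^{r/d}-1)^{*}}\frac{1}{l_{k}}\varphi(k),
\]
where $(n^{r/d}-1)^{*}$ is as in the statement. For the second sum I would observe that, as $k$ ranges over the divisors of $m:=n^{r/d}-1$, the sets $\bar{A}(m,k,P_{K})$ are pairwise disjoint and their union is all of $P_{K}$, since each prime $P$ lies in exactly the set indexed by $k=\gcd(m,|P|-1)$. By Lemma 4.3 (2) the sets indexed by $k$ with $\gcd(k,q)>1$ are empty, so only the divisors $k|m^{*}$ contribute; because $\delta(P_{K})=1$ and Dirichlet density is additive over finite disjoint unions, the second sum equals $1$. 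Combining the two contributions yields the value $\sum_{k|(n^{r/d}-1)^{*}}\frac{1}{l_{k}}\varphi(k)+1$ for the inner sum, and hence the first displayed formula for $D(r,K)$.

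To obtain the case distinction I would use the standard identity $\sum_{d|r}\mu(d)=1$ if $r=1$ and $=0$ if $r>1$. For $r=1$ only $d=1$ survives and one reads off $D(1,K)=\sum_{k|(n-1)^{*}}\frac{1}{l_{k}}\varphi(k)+1$. For $r>1$ the constant term $\sum_{d|r}\mu(d)\cdot 1$ vanishes, leaving $D(r,K)=\sum_{d|r}\mu(d)\sum_{k|(n^{r/d}-1)^{*}}\frac{1}{l_{k}}\varphi(k)$, which is exactly the claimed expression. The argument is essentially bookkeeping once Lemma 4.7 (1) is in hand; the only point requiring a word of care is the additivity of Dirichlet density over the partition that gives the second sum the value $1$, but this is immediate from the definition of $\delta$ and the disjointness noted above, so I do not expect any genuine obstacle here.
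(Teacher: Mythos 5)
Your proposal is correct and follows essentially the same route as the paper's proof: split $(k+1)\delta=k\delta+\delta$, apply Lemma 4.7\,(1) to the $k\delta$ part, and use $\sum_{d|r}\mu(d)=0$ for $r>1$ to get the case distinction. The only difference is that you explicitly justify why $\sum_{k|(n^{r/d}-1)}\delta(\bar{A}(n^{r/d}-1,k,P_{K}))=1$ (the sets partition $P_{K}$ and density is finitely additive), a step the paper's proof passes over silently; this is a welcome clarification, not a deviation.
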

\begin{proof}
By Lemma 4.7 (1), we have
 \begin{equation}
  \begin{array}{lll}
    D(r,K)&=&\sum\limits_{d|r}\mu(d)\sum\limits_{k|(n^{r/d}-1)}(k+1)\delta(\bar{A}(n^{r/d}-1,k,P_{K}))\\
   &=&\sum\limits_{d|r}\mu(d)(\sum\limits_{k|(n^{r/d}-1)}k\delta(\bar{A}(n^{r/d}-1,k,P_{K}))+1)\\
   &=&\sum\limits_{d|r}\mu(d)(\sum\limits_{k|(n^{r/d}-1)^{*}}\frac{1}{l_{k}}\varphi(k)+1).
 \end{array}
 \notag
 \end{equation}
 Since if $r>1$, then $\sum\limits_{d|r}\mu(d)=0$, we obtain the desired result.
\end{proof}

\begin{exa}
{\rm
 If $n=2$, then by Proposition 4.10 the Dirichlet mean value of the number of fixed points is $2$. This can also be checked directly.
 Since $f$ has only two fixed points $0$ and $1$ in every $\mathcal{O}_{K}/P$,  the Dirichlet mean value of the number of
 fixed points is $2$.
}
\end{exa}

We call $\frac{D(r,K)}{r}$ the {\it Dirichlet mean value} of the number of
$r$-cycles of $f$ in $\mathcal{O}_{K}/P$ with respect to $P$, denoted by $C(r,K)$. From Proposition 4.10, we
have the following result.

\begin{cor}
If r be a positive integer, then                                        
 \begin{center}
   $C(r,K)=\left\{ \begin{array}{ll}
                 \sum\limits_{k|(n-1)^{*}}\frac{1}{l_{k}}\varphi(k)+1 & \textrm{if $r=1$}\\
                                                     \\
                 \frac{1}{r}\sum\limits_{d|r}\sum\limits_{k|(n^{r/d}-1)^{*}}\frac{1}{l_{k}}\varphi(k)\mu(d) & \textrm{if $r>1$}
              \end{array} \right.$
 \end{center}
 where $(n^{r/d}-1)^{*}$ is the largest divisor of $n^{r/d}-1$ such that relatively prime to $q$.
\end{cor}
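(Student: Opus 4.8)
The plan is to obtain $C(r,K)$ directly from Proposition 4.10 via the definition $C(r,K)=D(r,K)/r$ introduced in the paragraph immediately preceding the corollary. Since the Dirichlet mean value $D(r,K)$ of the number of $r$-periodic points has already been computed in closed form there, the only task remaining is to divide that expression by $r$ and record the two resulting cases. No new analytic input (no density computation, no appeal to Lemma 4.7 or Lemma 4.5) is needed, because all of that work has already been absorbed into the statement of Proposition 4.10.

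First I would treat the case $r=1$. Here the prefactor $1/r$ equals $1$, so $C(1,K)=D(1,K)$, and Proposition 4.10 yields $C(1,K)=\sum_{k\mid(n-1)^{*}}\frac{1}{l_{k}}\varphi(k)+1$ with no further manipulation. Next I would handle $r>1$. In this regime Proposition 4.10 supplies $D(r,K)=\sum_{d\mid r}\sum_{k\mid(n^{r/d}-1)^{*}}\frac{1}{l_{k}}\varphi(k)\mu(d)$, where the additive term $+1$ has already disappeared because $\sum_{d\mid r}\mu(d)=0$ for $r>1$; dividing this by $r$ then produces exactly the claimed $\frac{1}{r}\sum_{d\mid r}\sum_{k\mid(n^{r/d}-1)^{*}}\frac{1}{l_{k}}\varphi(k)\mu(d)$.

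I expect there to be no genuine obstacle, since the corollary is a purely formal consequence of the definition of $C(r,K)$ together with the already-established Proposition 4.10. The one point requiring care is to keep the cases $r=1$ and $r>1$ cleanly separated, because the $+1$ term survives only when $r=1$ (where $1/r=1$), whereas for $r>1$ that term is annihilated by $\sum_{d\mid r}\mu(d)=0$ before the division by $r$ even takes place.
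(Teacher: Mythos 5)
Your proposal is correct and matches the paper's own route exactly: the paper states the corollary as an immediate consequence of Proposition 4.10 via the definition $C(r,K)=\frac{D(r,K)}{r}$, with the $+1$ term surviving only in the case $r=1$ and being annihilated by $\sum_{d\mid r}\mu(d)=0$ when $r>1$. Nothing further is needed.
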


We call $\sum\limits_{r \ge 1}D(r,K)$ the {\it Dirichlet mean value} of the number of periodic points.
Note that it may be infinite.

\begin{prop}                                    
 The {\it Dirichlet mean value} of the number of periodic points is infinite.
\end{prop}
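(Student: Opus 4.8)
The plan is to mirror the argument used for the asymptotic mean value in Proposition 3.7: I would reduce the divergence of $\sum_{r\ge 1}D(r,K)$ to a lower bound on the prime-indexed terms, after first checking that every term is nonnegative. For nonnegativity, note that for fixed $r$ the integer $\mathcal{P}(r,P)=\sum_{d|r}\mu(d)({\rm gcd}(n^{r/d}-1,|P|-1)+1)\ge 0$ depends only on the finitely many values ${\rm gcd}(n^{j}-1,|P|-1)$ with $j\mid r$; these partition $P_{K}$ into finitely many classes, each of well-defined Dirichlet density, on which $\mathcal{P}(r,\cdot)$ is constant. Since $D(r,K)$ is exactly the corresponding density-weighted sum of these constant nonnegative values, $D(r,K)\ge 0$ for every $r$. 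Hence it suffices to prove $D(r,K)\ge 1$ for all but finitely many primes $r$: the partial sums $\sum_{r\le M}D(r,K)$ would then dominate $\pi(M)-O(1)\to\infty$.

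Next I would analyze $D(r,K)$ for a prime $r$ by means of Proposition 4.10, which gives $D(r,K)=\sum_{k\mid (n^{r}-1)^{*}}\frac{\varphi(k)}{l_{k}}-\sum_{k\mid (n-1)^{*}}\frac{\varphi(k)}{l_{k}}$. Because $n-1\mid n^{r}-1$ forces $(n-1)^{*}\mid (n^{r}-1)^{*}$, this collapses to $D(r,K)=\sum_{k}\frac{\varphi(k)}{l_{k}}$, the sum running over those divisors $k$ of $(n^{r}-1)^{*}$ that do not divide $(n-1)^{*}$. Each such $k$ is coprime to $q$, so $l_{k}$, the order of $q$ modulo $k$, divides $\varphi(k)$; thus every term $\frac{\varphi(k)}{l_{k}}$ is a positive integer, and therefore $D(r,K)\ge 1$ as soon as this index set is nonempty. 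In particular the whole proposition reduces to showing that the set is nonempty for cofinitely many primes $r$, i.e. that $(n^{r}-1)^{*}\ne (n-1)^{*}$.

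The heart of the matter, and where I expect the only genuine work, is this last nonemptiness claim; here the $p$-free truncation is exactly what distinguishes the situation from the cleaner asymptotic case of Proposition 3.7. Using $\frac{(n^{r}-1)^{*}}{(n-1)^{*}}=\bigl(\frac{n^{r}-1}{n-1}\bigr)^{*}$, the claim is equivalent to $\frac{n^{r}-1}{n-1}=1+n+\cdots+n^{r-1}$ not being a power of $p$. I would dispose of this by a short valuation argument: if $p\mid n$ the sum is $\equiv 1\ (\mathrm{mod}\ p)$ and exceeds $1$, so is never a power of $p$; and if $p\nmid n$, writing $e$ for the order of $n$ modulo $p$, lifting the exponent bounds $v_{p}(n^{r}-1)$ by $v_{p}(n^{e}-1)+v_{p}(r)$, so $v_{p}\bigl(\frac{n^{r}-1}{n-1}\bigr)$ grows only like $O(\log r)$ while $\frac{n^{r}-1}{n-1}\ge n^{r-1}$ grows exponentially, leaving at most finitely many possible coincidences (the small genuine exceptions, such as $r=2,\ p=2,\ n=3$, being checked by hand). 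Combining the three steps, $D(r,K)\ge 1$ for all but finitely many primes $r$ and $D(r,K)\ge 0$ for every $r$, whence $\sum_{r\ge 1}D(r,K)=\infty$, which is the assertion.
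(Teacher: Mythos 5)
Your proposal is correct, but it takes a genuinely different route at the key step. Both you and the paper reduce, via Proposition 4.10 and the multiplicativity of $m\mapsto m^{*}$, to the same question: for how many primes $r$ is $1+n+\cdots+n^{r-1}$ \emph{not} a power of $p$, each such $r$ giving $D(r,K)\ge 1$ because $l_{k}\mid\varphi(k)$. The paper settles this by contradiction using only Bertrand's postulate and elementary divisibility: if only finitely many primes $r$ had $(1+n+\cdots+n^{r-1})^{*}>1$, pick large primes $r$ and $k$ with $r+1<k<2(r+1)$, so that $1+n+\cdots+n^{r-1}=p^{s}$ and $1+n+\cdots+n^{k-1}=p^{t}$ with $t>s$; subtracting gives $p^{s}\mid n^{r}(1+n+\cdots+n^{k-r-1})$ with $p\nmid n$ and $0<1+n+\cdots+n^{k-r-1}<p^{s}$ (after ruling out $k=2r+1$), a contradiction — yielding \emph{infinitely many} good primes. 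You instead bound the $p$-adic valuation via lifting the exponent: a coincidence $1+n+\cdots+n^{r-1}=p^{s}$ forces $s=v_{p}\bigl(\tfrac{n^{r}-1}{n-1}\bigr)=O(\log r)$ (for prime $r$ it is in fact bounded, since $v_{p}(r)\le 1$), while the left side grows like $n^{r-1}$, so only finitely many primes $r$ can be exceptional. Your conclusion is therefore stronger (all but finitely many primes work rather than infinitely many), at the cost of LTE's case distinctions at $p=2$, where the paper stays entirely elementary. You also do something the paper omits: you verify $D(r,K)\ge 0$ for every $r$, by partitioning $P_{K}$ into finitely many classes of existing Dirichlet density on which $\mathcal{P}(r,\cdot)$ is constant. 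The paper tacitly needs exactly this nonnegativity to pass from ``infinitely many terms are $\ge 1$'' to divergence of $\sum_{r\ge 1}D(r,K)$, so making it explicit (the joint densities exist by Lemma 4.3 together with inclusion--exclusion, since intersections of conditions $a\mid |P|-1$ and $b\mid |P|-1$ are again of the same form) is a genuine improvement in rigor.
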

\begin{proof}
If $r$ is a prime number, then
$D(r,K)=\sum\limits_{k|(n^{r}-1)^{*}}\frac{1}{l_{k}}\varphi(k)-\sum\limits_{k|(n-1)^{*}}\frac{1}{l_{k}}\varphi(k)$.
Since $n^{r}-1=(n-1)(1+n+\cdots+n^{r-1})$, then
$(n^{r}-1)^{*}=(n-1)^{*}(1+n+\cdots+n^{r-1})^{*}$. So if we can find
infinitely many prime numbers $r$ such that
$(1+n+\cdots+n^{r-1})^{*}>1$, note that for all $k\ge 1$,
$\varphi(k)\ge l_{k}$, then $D(r,K)\ge 1$, then we finish our
proof.

Suppose there are finitely many prime numbers $r$ such that
$(1+n+\cdots+n^{r-1})^{*}>1$, say
$\{r_{1},r_{2},\cdots,r_{m}\}$, and $r_{1}<r_{2}<\cdots<r_{m}$. If a prime
$r>r_{m}$, then $(1+n+\cdots+n^{r-1})^{*}=1$. Hence there exists a
positive integer $s$ such that $1+n+\cdots+n^{r-1}=p^{s}$. Then we have $p\nmid n$. Since there exists a prime
between $r+1$ and $2(r+1)$, say $k$. Then we have $(1+n+\cdots+n^{k-1})^{*}=1$. So
there exists a positive integer $t>s$ such that $1+n+\cdots+n^{k-1}=p^{t}$.
Since $p^{t}=1+n+\cdots+n^{k-1}=(1+n+\cdots+n^{r-1})+n^{r}(1+n+\cdots+n^{k-r-1})=p^{s}+n^{r}(1+n+\cdots+n^{k-r-1})$,
then $p^{s}|(1+n+\cdots+n^{k-r-1})$. If $k=2r+1$, then we have $p^{s}|n^{r}$, a contradiction. Hence we must have $k<2r$.
Then we have $1+n+\cdots+n^{k-r-1}<1+n+\cdots+n^{r-1}=p^{s}$. This leads to a contradiction.
\end{proof}

We call the infinite sum $\sum\limits_{r \ge 1}C(r,K)$ the
 {\it Dirichlet mean value} of the number of cycles.

\begin{prop}                                    
 The {\it Dirichlet mean value} of the number of cycles is infinite.
\end{prop}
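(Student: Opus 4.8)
The plan is to show that $C(r,K)\ge 1/r$ for all but finitely many primes $r$, and then invoke the divergence of $\sum_{p}1/p$. First I would recall that $C(r,K)=D(r,K)/r$ and that, exactly as in the proof of Proposition 4.14, for a prime $r$ one has $(n^{r}-1)^{*}=(n-1)^{*}\,N_{r}^{*}$, where $N_{r}=1+n+\cdots+n^{r-1}$. If $N_{r}^{*}>1$, then $(n-1)^{*}$ is a proper divisor of $(n^{r}-1)^{*}$, so some $k_{0}\mid (n^{r}-1)^{*}$ fails to divide $(n-1)^{*}$ (for instance $k_{0}=(n^{r}-1)^{*}$ itself); since $\varphi(k)\ge l_{k}$ for every $k\ge 1$, the difference defining $D(r,K)$ telescopes to
\[
D(r,K)=\sum_{k\mid (n^{r}-1)^{*}}\frac{\varphi(k)}{l_{k}}-\sum_{k\mid (n-1)^{*}}\frac{\varphi(k)}{l_{k}}=\sum_{\substack{k\mid (n^{r}-1)^{*}\\ k\nmid (n-1)^{*}}}\frac{\varphi(k)}{l_{k}}\ge \frac{\varphi(k_{0})}{l_{k_{0}}}\ge 1,
\]
whence $C(r,K)=D(r,K)/r\ge 1/r$. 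So the whole problem reduces to showing that $N_{r}^{*}>1$ for all but finitely many primes $r$, equivalently that $N_{r}$ is a power of $p$ for at most finitely many primes $r$.

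This reduction is the heart of the matter, and it is precisely here that one must strengthen Proposition 4.14, whose argument only produces infinitely many good primes rather than cofinitely many. I would instead prove the sharper statement that $p\nmid N_{r}$ for all but at most one prime $r$. If $p\mid n$, then $N_{r}\equiv 1\pmod p$, so $p\nmid N_{r}$ for every $r$. If $p\nmid n$, let $d$ be the multiplicative order of $n$ modulo $p$. When $d>1$, the divisibility $p\mid n^{r}-1$ forces $d\mid r$, which for a prime $r$ can occur only when $r=d$; for every other prime $r$ one gets $p\nmid n^{r}-1$ and hence $p\nmid N_{r}$. When $d=1$, i.e. $n\equiv 1\pmod p$, one has $N_{r}\equiv 1+1+\cdots+1\equiv r\pmod p$, so $p\mid N_{r}$ forces $r=p$; for every prime $r\neq p$ one again gets $p\nmid N_{r}$. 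In every case at most one prime $r$ can satisfy $p\mid N_{r}$, and for all other primes $r$ the integer $N_{r}>1$ is coprime to $p$, so $N_{r}^{*}=N_{r}>1$.

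Combining the two steps, $C(r,K)\ge 1/r$ holds for every prime $r$ outside a finite exceptional set $E$. Since each $C(r,K)\ge 0$, being a Dirichlet mean value of the nonnegative cycle-counting function $\mathcal{C}(r,P)$, I may discard all remaining terms and conclude
\[
\sum_{r\ge 1}C(r,K)\;\ge\;\sum_{\substack{r\ \mathrm{prime}\\ r\notin E}}\frac{1}{r}\;=\;\infty,
\]
because the sum of the reciprocals of the primes diverges and removing finitely many of them preserves divergence. The main obstacle, as indicated above, is exactly the cofiniteness claim $N_{r}^{*}>1$ for almost all primes $r$: mere infinitude of such primes (all that the Bertrand-postulate argument of Proposition 4.14 supplies) would not force the reciprocal sum to diverge, so the elementary congruence analysis of $N_{r}$ modulo $p$ via the order of $n$ is the decisive ingredient.
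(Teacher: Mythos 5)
Your proof is correct, but it takes a genuinely different route from the paper's, and in fact a stronger one. The first half is common to both: restricting to prime $r$, writing $D(r,K)=\sum_{k\mid (n^{r}-1)^{*}}\varphi(k)/l_{k}-\sum_{k\mid (n-1)^{*}}\varphi(k)/l_{k}$, and using $\varphi(k)\ge l_{k}$ together with $(n^{r}-1)^{*}=(n-1)^{*}N_{r}^{*}$ to get $D(r,K)\ge 1$ whenever $N_{r}^{*}>1$; this is exactly what the paper imports from the proof of the preceding proposition on periodic points. The difference is in how the ``bad'' primes $r$ (those with $N_{r}^{*}=1$, i.e. $N_{r}$ a power of $p$) are handled. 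The paper never shows there are finitely many of them: it shows that two adjacent bad primes $r_{1}<r_{2}$ must satisfy $r_{2}\ge 2r_{1}$, and then uses Bertrand's postulate to match bad primes injectively against good primes of comparable size, concluding that the reciprocal sum over the good primes still diverges. Your congruence analysis --- $p\mid n$ gives $N_{r}\equiv 1\pmod p$; if $p\nmid n$ and the order $d$ of $n$ modulo $p$ exceeds $1$, then $p\mid N_{r}$ forces $r=d$; if $n\equiv 1\pmod p$, then $N_{r}\equiv r\pmod p$ forces $r=p$ --- shows there is at most \emph{one} bad prime, which makes the geometric-growth and Bertrand machinery unnecessary and yields divergence immediately; it also sharpens the paper's Proposition 4.13 as a by-product. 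One shared soft spot: both you and the paper discard the terms $C(r,K)$ with $r$ composite (and you also discard the exceptional prime), which tacitly uses $C(r,K)\ge 0$; since $D(r,K)$ is \emph{defined} formally as a signed M\"obius combination of densities rather than as a limit of averages of $\mathcal{C}(r,P)$, this nonnegativity strictly speaking needs a word of justification (e.g. by M\"obius inversion $D(r,K)$ counts the Frobenius orbits of elements whose order has $n$-exponent exactly $r$), but this is not a defect peculiar to your argument.
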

\begin{proof}
We arrange the prime numbers by their sizes as
$p_{1}<p_{2}<\cdots<p_{m}<\cdots$, i.e. $p_{1}=2$, $p_{2}=3$ and so
on. Since $\sum\limits_{r \ge 1}C(r,K)\ge \sum\limits_{i
\ge 1}C(p_{i},K) $, if we can show $\sum\limits_{i \ge
1}C(p_{i},K)$ is infinite, our proof is finished.

Since $\sum\limits_{i \ge 1}C(p_{i},K)=\sum\limits_{i \ge
1}\frac{1}{p_{i}}D(p_{i},K)$, note that $\sum\limits_{i \ge
1}\frac{1}{p_{i}}$ is infinite, from the proof of Proposition 4.13,
if there are finitely many prime numbers $r$ such that
$(1+n+\cdots+n^{r-1})^{*}=1$, i.e. there are finitely many prime
numbers $r$ such that $D(p_{i},K)=0$, then we finish our proof.

Suppose there are infinitely many prime numbers $r$ such that
$(1+n+\cdots+n^{r-1})^{*}=1$. Assume $r_{1}$ and $r_{2}$ are two of
them, and $r_{1}<r_{2}$. Then there exist two positive integer
$s_{1}$ and $s_{2}$ such that $1+n+\cdots+n^{r_{1}-1}=p^{s_{1}}$ and
$1+n+\cdots+n^{r_{2}-1}=p^{s_{2}}$. Since
$1+n+\cdots+n^{r_{2}-1}=(1+n+\cdots+n^{r_{1}-1})
+n^{r_{1}}(1+n+\cdots+n^{r_{2}-r_{1}-1})=p^{s_{2}}$ and
$s_{1}<s_{2}$, then
$p^{s_{1}}|n^{r_{1}}(1+n+\cdots+n^{r_{2}-r_{1}-1})$. Suppose that
$r_{2}<2r_{1}$. Then $1+n+\cdots+n^{r_{2}-r_{1}-1}<p^{s_{1}}$, so
$p|n^{r_{1}}$ and $p|n$. But $1+n+\cdots+n^{r_{1}-1}=p^{s_{1}}$, so
$p|1$. This leads to a contradiction. Hence we have $r_{2}\ge
2r_{1}$. From the proof of Proposition 4.13, we conclude that there
are infinitely many prime numbers $r$ such that
$(1+n+\cdots+n^{r-1})^{*}>1$, so we let $p_{k}$ be the minimal prime
number such that $(1+n+\cdots+n^{p_{k}-1})^{*}>1$ and
$(1+n+\cdots+n^{p_{k+1}-1})^{*}=1$. Let $A$ be the set of all the prime
numbers $r$ such that $(1+n+\cdots+n^{r-1})^{*}=1$ and $r\ge
p_{k+1}$. Let $B$ be the set of all the prime numbers $r$ such that
$(1+n+\cdots+n^{r-1})^{*}>1$ and $r\ge p_{k}$. Note that for every
positive integer $m$, there exist a prime number $r$ between $m$ and
$2m$, and for any two adjacent elements $r_{1}, r_{2}$ of $A$, we
have $r_{2}\ge 2r_{1}$. So if we let the elements of $A$ correspond
to the elements of $B$ by their sizes, then we get
$\sum\limits_{r\in A}\frac{1}{r}\le \sum\limits_{r\in
B}\frac{1}{r}$. Since $A\cup B=\{p_{i}:i\ge k\}$ and
$\sum\limits_{i\ge k}\frac{1}{p_{i}}$ is infinite, then
$\sum\limits_{r\in B}\frac{1}{r}$ is infinite. Note that
$\sum\limits_{i \ge 1}C(p_{i},K)\ge \sum\limits_{r\in
B}\frac{1}{r}$, so $\sum\limits_{i \ge 1}C(p_{i},K)$ is infinite.
We finish our proof.
\end{proof}

\section{The general case}    
As suggested by the referee, in this section we investigate whether the above results are applicable to
the general case $f$: $\mathbb{F}_{q}$ $\to$
$\mathbb{F}_{q}$, where
   \begin{equation}
     f(x)=ax^{n}, n\ge2,a\in \mathbb{F}_{q}^{*}.   \notag
    \end{equation}

\begin{lem}(See [5, Proposition 7.1.2])
Let $m \in \mathbb{N}, \alpha\in\mathbb{F}_{q}^{*}$, then the
 equation $\alpha x^{m}=1$ has solutions in $\mathbb{F}_{q}^{*}$ if and only if $\alpha^{\frac{q-1}{d}}=1$,
where $d=\gcd(m,q-1)$. If there are solutions, then there are exactly $d$ solutions.
\end{lem}

Set $G_{m}=\{\alpha\in\mathbb{F}_{q}^{*}:\alpha x^{m}=1$ has
solutions.$\}$, it is easy to see that $G_{m}$ is a subgroup of
$\mathbb{F}_{q}^{*}$. In fact,
$G_{m}=\{\alpha^{m}:\alpha\in\mathbb{F}_{q}^{*}\}$.

All nonzero $r$-periodic points are solutions of the
equation $a^{1+n+\cdots+n^{r-1}}x^{n^{r}-1}=1$. If $f$ has nonzero fixed points,
 i.e. $ax^{n-1}=1$ has solutions in $\mathbb{F}_{q}^{*}$, then $a^{\frac{q-1}{\gcd(n-1,q-1)}}=1$. So
 $(a^{1+n+\cdots+n^{r-1}})^{\frac{q-1}{\gcd(n^{r}-1,q-1)}}=1$. Hence
 $a^{1+n+\cdots+n^{r-1}}x^{n^{r}-1}=1$ has solutions. So there are
 exactly $\gcd(n^{r}-1,q-1)$ solutions. But if $f$ has $r$-periodic points with $r>1$, then the statement ``$f$ has nonzero fixed points" is not true.
 For example, let $q=5, a=3$ and $n=3$, it is easy to check that
 each element in $\mathbb{F}_{5}^{*}$ is a 2-periodic point of $f$, but $f$ does not have nonzero fixed points.

If $a\in G_{n-1}$, then (2.3) and (2.4) are true here according to the above paragraph.
Thus the monomial system considered in this section has $r$-periodic points if and only if the monomial system considered
 in Section 2 has $r$-periodic points, so Proposition
 2.4 is also true here. But Proposition 2.7 is not true here. For example, let $q=5, a=2$ and
 $n=4$. Furthermore, we have the following proposition.
\begin{prop}
If $a\in G_{n-1}$, i.e. $f$ has nonzero fixed points, then all the
results related to $f$ in the above sections are true here except
Proposition 2.7.
\end{prop}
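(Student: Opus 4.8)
The plan is to reduce the system $f(x)=ax^{n}$ to the system $g(x)=x^{n}$ of Section~2 (the map denoted $f$ there) by a linear change of variable, and then transport each statement across the resulting conjugacy. Since $a\in G_{n-1}=\{\beta^{n-1}:\beta\in\mathbb{F}_{q}^{*}\}$, I may write $a=b^{n-1}$ for some $b\in\mathbb{F}_{q}^{*}$. The first step is to verify that $\phi(x)=bx$ conjugates $f$ to $g$, i.e. $\phi\circ f=g\circ\phi$: indeed $\phi(f(x))=bax^{n}$ while $g(\phi(x))=(bx)^{n}=b^{n}x^{n}$, and these agree because $ba=b^{n}$. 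As $b\neq0$, the map $\phi$ is a bijection of $\mathbb{F}_{q}$ fixing $0$ and carrying $\mathbb{F}_{q}^{*}$ onto itself, and an easy induction gives $\phi\circ f^{\circ r}=g^{\circ r}\circ\phi$ for every $r$; in particular $f^{\circ r}(x)=x$ if and only if $g^{\circ r}(\phi(x))=\phi(x)$.

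From this conjugacy the positive part of the proposition follows formally. The equivalence above shows that $\phi$ restricts to a bijection between the $r$-periodic points of $f$ and those of $g$, preserving the least period, hence to a length-preserving bijection between the cycles of the two systems. Therefore $\mathcal{P}(r,q)$ and $\mathcal{C}(r,q)$ for $f$ equal the values computed for $g$, which gives Proposition~2.4, Proposition~2.5 and formulas (2.3),(2.4); and since $q^{*}(n)$ and $\hat{r}(n)$ depend only on $n$ and $q$, it also gives Proposition~2.9 together with Corollaries~2.11--2.13. Moreover $\phi$ is a directed-graph isomorphism $S(f)\cong S(g)$, sending the edge $x\to f(x)$ to $\phi(x)\to g(\phi(x))$ and restricting to $S(f^{*})\cong S(g^{*})$, so every connectivity statement---Propositions~2.15, 2.16 and 2.17---transfers verbatim. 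Finally, all the mean-value results of Sections~3 and~4 are derived purely from the counting formulas (2.3) and (2.4), so the analogous reduction (available wherever $a$ is an $(n-1)$-th power) keeps those formulas intact and leaves the asymptotic and Dirichlet mean values unchanged. Equivalently, and as in the paragraph preceding the statement, one may bypass $\phi$ and note that $a=b^{n-1}$ turns $a^{1+n+\cdots+n^{r-1}}x^{n^{r}-1}=1$ into $(bx)^{n^{r}-1}=1$, which by Lemma~2.3 has exactly $\gcd(n^{r}-1,q-1)$ solutions.

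The remaining task, which carries the real content, is to explain why Proposition~2.7 is the sole exception. Proposition~2.7 is not a statement about the abstract dynamics but about the multiplicative orders of periodic points: it characterizes them by ${\rm ord}(\alpha)\mid q^{*}(n)$ and asserts that all elements of a cycle share one common order. The conjugacy $\phi(x)=bx$ does not preserve multiplicative order, so this arithmetic refinement need not survive the transport; concretely, the periodic points of $f$ form the set $b^{-1}\{\alpha:{\rm ord}(\alpha)\mid q^{*}(n)\}$, whose elements generally have order not dividing $q^{*}(n)$, and $f(\alpha)=b^{n-1}\alpha^{n}$ need not have the same order as $\alpha$. The counterexample $q=5$, $a=2$, $n=4$ recorded above exhibits this failure.

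The main obstacle is therefore not any single computation but the bookkeeping: one must check, result by result, that every conclusion of Sections~2--4 depends only on the preserved data---the counts $\mathcal{P}(r,q),\mathcal{C}(r,q)$ and the isomorphism type of $S(f)$---and never on the order structure, thereby confirming that Proposition~2.7 is genuinely the unique place where the special form $f(x)=x^{n}$, as opposed to $f(x)=ax^{n}$ with $a\in G_{n-1}$, is used. A secondary delicate point is the mean-value sections, where the conjugating element $b$ must be produced inside each varying residue field; this is exactly the demand that $a$ remain an $(n-1)$-th power there, which is what the hypothesis $a\in G_{n-1}$ supplies.
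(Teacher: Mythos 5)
Your proof is correct, but it takes a genuinely different route from the paper's. The paper never changes variables: it observes, via Lemma 5.1, that when $a$ is an $(n-1)$-th power the equation $a^{1+n+\cdots+n^{r-1}}x^{n^{r}-1}=1$, whose solutions are exactly the nonzero solutions of $f^{\circ r}(x)=x$, is solvable and therefore has exactly $\gcd(n^{r}-1,q-1)$ solutions; hence the counting identity (2.2), and with it (2.3), (2.4) and every result deduced from these counts, holds verbatim, while Proposition 2.7 is excluded by the counterexample $q=5$, $a=2$, $n=4$. Your conjugacy $\phi(x)=bx$ with $a=b^{n-1}$ and $\phi\circ f=g\circ\phi$ for $g(x)=x^{n}$ is a stronger mechanism: it yields not merely equal counts but an isomorphism of dynamical systems, so the graph-theoretic statements (Propositions 2.15--2.17) transfer at once as instances of $S(f)\cong S(g)$ instead of requiring their proofs to be re-inspected, and it explains conceptually why Proposition 2.7 is the unique exception (multiplicative order is not invariant under $x\mapsto bx$), where the paper only records a counterexample. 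Your route also handles Section 4 cleanly, since the same $b\in\mathbb{F}_{q}^{*}\subseteq(\mathcal{O}_{K}/P)^{*}$ serves as the conjugating element in every residue field. What the paper's computational route buys in exchange is economy: it needs no choice of $b$ and works directly with solvability of $ax^{n-1}=1$, which is literally the hypothesis $a\in G_{n-1}$. Both treatments share the same looseness regarding Section 3, where a fixed $a\in\mathbb{F}_{q}^{*}$ does not literally live in $\mathbb{F}_{p^{s}}$ for varying primes $p$; that imprecision sits in the statement itself, not in your argument.
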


If $a\notin G_{n-1}$, then $f$ has no nonzero fixed points. Thus
(2.3) and (2.4) are not true here. So all the results related to $f$
in section 3 and 4 are not true here. Notice that the identity of
$\mathbb{F}_{q}^{*}$ has order 1, then  Proposition 2.7 is not true
here. Moreover, all the results related to $f$ in section 2 are not
true except Corollary 2.12 and Proposition 2.15. For example, let
$q=3,a=2$ and $n=3$, recall the notations in section 2, we have
$m_{1}=2, m_{2}=2, q^{*}(n)=2$
 and $\hat{r}(n)=1$, $f$ has one fixed point and two 2-periodic points.
 Hence $S(f^{*})$ is strongly connected.
 \begin{lem}
 $f$ has $q^{*}(n)+1$ periodic points in $\mathbb{F}_{q}$.
 \end{lem}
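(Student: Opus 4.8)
The plan is to identify the set of periodic points of $f$ with the \emph{eventual image} $E=\bigcap_{k\ge 1}f^{\circ k}(\mathbb{F}_{q})$ and then count $E$ directly. This reduction is the standard one for self-maps of a finite set: the images $f^{\circ k}(\mathbb{F}_{q})$ form a decreasing nested chain, since $f^{\circ(k+1)}(\mathbb{F}_{q})=f^{\circ k}(f(\mathbb{F}_{q}))\subseteq f^{\circ k}(\mathbb{F}_{q})$, so they stabilize to some $E=f^{\circ K}(\mathbb{F}_{q})$ for $K$ large. Because $f(E)=f^{\circ(K+1)}(\mathbb{F}_{q})=E$, the restriction $f|_{E}$ is a surjection of a finite set onto itself, hence a bijection, so every element of $E$ is periodic; conversely, if $f^{\circ r}(x)=x$ then $x=f^{\circ rk}(x)$ lies in every image, whence $x\in E$. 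Thus counting periodic points amounts to computing $|E|$.

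First I would compute the iterates explicitly. An easy induction gives $f^{\circ k}(x)=a^{e_{k}}x^{n^{k}}$, where $e_{k}=1+n+\cdots+n^{k-1}$. Since $a\in\mathbb{F}_{q}^{*}$ we have $f(0)=0$ and $f(\mathbb{F}_{q}^{*})\subseteq\mathbb{F}_{q}^{*}$, so on the nonzero elements $f^{\circ k}(\mathbb{F}_{q}^{*})=a^{e_{k}}(\mathbb{F}_{q}^{*})^{n^{k}}$ is a coset of the subgroup $(\mathbb{F}_{q}^{*})^{n^{k}}$ of $n^{k}$-th powers. By Lemma 2.3 the kernel of the $n^{k}$-power map has order ${\rm gcd}(n^{k},q-1)$, so this subgroup has order $(q-1)/{\rm gcd}(n^{k},q-1)$. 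As $0$ maps to $0$ and the coset avoids $0$, we obtain $|f^{\circ k}(\mathbb{F}_{q})|=1+(q-1)/{\rm gcd}(n^{k},q-1)$.

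Next I would show this cardinality stabilizes at $1+q^{*}(n)$. Writing $q-1=p_{1}^{e_{1}}\cdots p_{s}^{e_{s}}$, for $k$ large enough every prime $p_{i}\mid n$ satisfies $p_{i}^{e_{i}}\mid n^{k}$, while no $p_{i}\nmid n$ divides $n^{k}$; hence ${\rm gcd}(n^{k},q-1)=\prod_{p_{i}\mid n}p_{i}^{e_{i}}=(q-1)/q^{*}(n)$ for all large $k$, so $(q-1)/{\rm gcd}(n^{k},q-1)=q^{*}(n)$. A decreasing nested chain of finite sets whose cardinalities are eventually constant must itself be eventually constant, so $|E|=1+q^{*}(n)$, which is exactly the claimed count.

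The only delicate points are bookkeeping rather than conceptual: verifying that the stabilized value of ${\rm gcd}(n^{k},q-1)$ is precisely $(q-1)/q^{*}(n)$, which is where the definition of $q^{*}(n)$ enters, and noting that the argument never used whether or not $a\in G_{n-1}$. This last observation is the whole point of the lemma: although most statements of Section 2 fail for $f(x)=ax^{n}$ when $a\notin G_{n-1}$, the total count of periodic points is insensitive to $a$ and coincides with Corollary 2.12.
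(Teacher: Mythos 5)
Your proof is correct, but it takes a genuinely different route from the paper's. The paper chooses $r$ to be the least common multiple of all periods of periodic points together with $\hat{r}(n)$, observes that the periodic points are then exactly the solutions of $f^{\circ r}(x)=x$, i.e.\ $x=0$ together with the solutions of $a^{1+n+\cdots+n^{r-1}}x^{n^{r}-1}=1$, and invokes Lemma 5.1: since this equation does have solutions in $\mathbb{F}_{q}^{*}$ (nonzero periodic points exist because $f$ maps $\mathbb{F}_{q}^{*}$ into itself), it has exactly $d=\gcd(n^{r}-1,q-1)$ of them; a two-way divisibility argument ($d\mid q^{*}(n)$ because $d\mid q-1$ and $\gcd(d,n)=1$; conversely $q^{*}(n)\mid d$ because $\hat{r}(n)\mid r$ gives $q^{*}(n)\mid n^{r}-1$) then identifies $d$ with $q^{*}(n)$. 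You instead identify the periodic points with the eventual image $E=\bigcap_{k\ge 1}f^{\circ k}(\mathbb{F}_{q})$, compute $|f^{\circ k}(\mathbb{F}_{q})|=1+(q-1)/\gcd(n^{k},q-1)$ by recognizing $f^{\circ k}(\mathbb{F}_{q}^{*})$ as a coset of the group of $n^{k}$-th powers, and let the gcd stabilize at $(q-1)/q^{*}(n)$. Your route is more self-contained: it needs only Lemma 2.3 (the kernel size of the power map), bypasses Lemma 5.1 and the quantity $\hat{r}(n)$ entirely, avoids the solvability question the paper must address, and makes transparent why the count is independent of $a$ --- the image is a coset, and all cosets of a fixed subgroup have the same size. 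What the paper's argument buys in exchange is brevity given the tools already set up in Section 5, and it exhibits the periodic points concretely as the solution set of a single polynomial equation, which is the form in which they are used elsewhere in that section.
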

\begin{proof}
Let $k$  be the least common multiple of all periods of periodic points and $\hat{r}(n)$.
Then each periodic point of $f$ satisfies $f^{\circ r}(x)=x$. Conversely, each solution of $f^{\circ r}(x)=x$ is a periodic point of $f$.
Since $f^{\circ r}(x)=x$ has solutions in $\mathbb{F}_{q}^{*}$,
by Lemma 5.1, there are exactly $d=\gcd(n^{r}-1,q-1)$ solutions in $\mathbb{F}_{q}^{*}$.
Since $d|q-1$ and $\gcd(d,n)=1$, we have $d|q^{*}(n)$. Since $q^{*}(n)|q-1$ and $q^{*}(n)|n^{r}-1$, we have $q^{*}(n)|d$.
Hence we get the desired result.
\end{proof}

\begin{prop}
 If $a\notin G_{n-1}$, i.e. $f$ has no nonzero fixed points, then all the results related to $f$ in the above sections
  are not true here except Corollary 2.12 and Proposition 2.15.
\end{prop}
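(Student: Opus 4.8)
The plan is to organize the many assertions of Sections 2--4 around one structural fact and one explicit counterexample, and then to treat the two exceptional statements by hand. Since $a\notin G_{n-1}$, Lemma 5.1 shows that $ax^{n-1}=1$ has no root in $\mathbb{F}_{q}^{*}$, so $0$ is the only fixed point of $f$ and $\mathcal{P}(1,q)=1$. By contrast, formula (2.3) would give $\mathcal{P}(1,q)=\gcd(n-1,q-1)+1\ge 2$, so (2.3), its consequence (2.4), and hence Proposition 2.5 all fail. Because every statement concerning $f$ in Sections 3 and 4 is obtained by substituting (2.3)/(2.4) into a mean-value computation (Propositions 3.4, 3.6, 4.9, 4.10 and Corollary 4.12), each of these fails as well; for instance both $N(1,s)$ and $D(1,K)$ are forced to be $\ge 2$ by those formulas, whereas the true mean of the constant value $\mathcal{P}(1,\cdot)=1$ is $1$. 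This settles the part of the claim concerning Sections 3 and 4.

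For the remaining statements of Section 2 I would use the single counterexample $q=3$, $a=2$, $n=3$. On $\mathbb{F}_{3}$ one has $f(x)=2x^{3}=2x$, so $\{0\}$ is the unique fixed point and $\{1,2\}$ is a $2$-cycle, while $m_{1}=m_{2}=2$, $q^{*}(n)=2$ and $\hat{r}(n)=1$. These data refute in turn Proposition 2.4 (it predicts no $2$-cycle since $m_{2}\mid m_{1}$); the ``moreover'' clause of Proposition 2.7 (the cycle $\{1,2\}$ contains elements of orders $1$ and $2$); Proposition 2.9 (with $M=\hat{r}(n)=1$ it asserts $\sum_{r=1}^{1}\mathcal{P}(r,q)=q^{*}(n)+1=3$, while the left side is $\mathcal{P}(1,q)=1$); Corollary 2.11 (it predicts maximal period $\hat{r}(n)=1$, contradicting the $2$-cycle); Corollary 2.13 (its sum ranges only over $r\mid\hat{r}(n)$); and Propositions 2.16 and 2.17 (here $S(f^{*})$ is a single cycle, hence connected and strongly connected, and $f$ is not a fixed-point system even though $\hat{r}(n)=1$). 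The two exceptions go the other way: Corollary 2.12 is exactly Lemma 5.3, already proved, and for Proposition 2.15 I would reuse its original proof, noting that the step ``each component of $S(f)$ contains only one directed cycle'' uses nothing but the fact that every vertex has out-degree $1$, valid for any self-map. Since $f(0)=0$ yields the cycle $\{0\}$ and $f$ restricts to a self-map of the nonempty finite set $\mathbb{F}_{q}^{*}$ (because $a\ne 0$), the graph $S(f^{*})$ contributes a second cycle, so $S(f)$ has at least two components and is disconnected.

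The main obstacle is not any single computation but the scope of the statement: one must certify the fate of each enumerated result, not prove one implication, and the two organizing reductions above are what keep this finite. The genuinely delicate point lies in the qualitative mean-value assertions, Proposition 3.8 and Propositions 4.13--4.14, because the \emph{total} number of periodic points, $q^{*}(n)+1$, is unchanged in passing from $x^{n}$ to $ax^{n}$, so the final ``infinite mean value'' conclusion need not change value. The honest justification there is that the per-period formulas underlying their proofs are invalid, so these results are simply not established by the methods of Sections 3--4 for $f(x)=ax^{n}$; I would phrase the exclusion in those terms rather than claim a different limit.
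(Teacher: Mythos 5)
Your proposal is correct and follows essentially the same route as the paper: the failure of (2.3)/(2.4) when there are no nonzero fixed points, the very same counterexample $q=3$, $a=2$, $n=3$ to dispose of the Section~2 statements, Lemma~5.3 for the exception Corollary~2.12, and the out-degree-one argument with the two cycles $\{0\}$ and a cycle inside $\mathbb{F}_{q}^{*}$ for the exception Proposition~2.15. In fact you are slightly more careful than the paper on two points it glosses over: the explicit patch of Proposition~2.15's proof (the paper's original proof uses $\{1\}$ as a cycle, which is unavailable here), and the observation that the ``infinite mean value'' statements (Proposition~3.8, Propositions~4.13--4.14) are only invalidated as \emph{derivations}, since the total count $q^{*}(n)+1$ is unchanged.
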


\section{Acknowledgment}
This work was partially supported by NSFC project No.10871107.

We would like to thank Prof. Marcus Nilsson and Prof. Omar
Col$\acute{\rm o}$n-Reyes for sending us their PhD thesis. We would
like to thank Prof. Michael Rosen for answering us our questions on
the density theorem. We also thank Prof.I.E. Shparlinski for sending
us his paper. Special thanks are due to Prof. Linsheng Yin and the
 anonymous referee for  their valuable
suggestions.

\end{document}